\newcommand{\End}{\mathrm{End}}
\newcommand{\id}{\mathrm{id}}
\newcommand{\Hom}{\mathrm{Hom}}
\newcommand{\ihom}{\underline{\mathrm{Hom}}}
\newcommand{\sdiff}{\mathrm{SDiff}}
\newcommand{\Aut}{\mathrm{Aut}}
\newcommand{\Sym}{\mathrm{Sym}}
\newcommand{\pder}[2]{\frac{\partial{#1}}{\partial{#2}}}
\newcommand{\gh}{\hat{\Gamma}}
\newcommand{\scinfty}{SC^\infty}
\newcommand{\intZ}{\mathbbm{Z}}
\newcommand{\realR}{\mathbbm{R}}
\newcommand{\olr}{\overline{\realR}}
\newcommand{\olv}{\overline{V}}
\newcommand{\catgr}{\mathsf{Gr}}
\newcommand{\catsets}{\mathsf{Sets}}
\newcommand{\catvect}{\mathsf{Vect}}
\newcommand{\catsvect}{\mathsf{SVect}}
\newcommand{\catsalg}{\mathsf{SAlg}}
\newcommand{\cattop}{\mathsf{Top}}
\newcommand{\catman}{\mathsf{Man}}
\newcommand{\catsman}{\mathsf{SMan}}
\newcommand{\catvbun}{\mathsf{VBun}}
\newcommand{\catsvbun}{\mathsf{SVBun}}
\newcommand{\catc}{\mathsf{C}}
\newcommand{\catd}{\mathsf{D}}
\newcommand{\catspoint}{\mathsf{SPoint}}
\newcommand{\catgrp}{\mathsf{Grp}}
\newcommand{\cp}{\mathcal{P}}
\newcommand{\cm}{\mathcal{M}}
\newcommand{\cg}{\mathcal{G}}
\newcommand{\cu}{\mathcal{U}}
\newcommand{\cn}{\mathcal{N}}
\newcommand{\cx}{\mathcal{X}}
\newcommand{\cj}{\mathcal{J}}
\newcommand{\cv}{\mathcal{V}}
\newcommand{\ce}{\mathcal{E}}
\newcommand{\ca}{\mathcal{A}}
\newcommand{\cd}{\mathcal{D}}
\newcommand{\ct}{\mathcal{T}}
\newcommand{\ctm}{\mathcal{TM}}
\newcommand{\evf}{\cx(\cm)_{\bar{0}}}
\newtheorem{thm}{Theorem}[section]
\newtheorem{prop}[thm]{Proposition}
\newtheorem{lemma}[thm]{Lemma}
\newtheorem{cor}[thm]{Corollary}
\newtheorem{dfn}[thm]{Definition}
\newtheorem{ex}[thm]{Example}
\newcommand{\ovf}{\cx(\cm)_{\bar{1}}}
\newcommand{\Der}{\ensuremath{\operatorname{Der}}}
\newcommand{\icomp}{\underline{\mathrm{\circ}}}
\newcommand{\catmod}{\mathsf{Mod}}
\newcommand{\op}[1]{\ensuremath{\operatorname{#1}}}
\newcommand{\wh}[1]{\ensuremath{\widehat{#1}}}
\newcommand{\ol}[1]{\ensuremath{\overline{#1}}}
\newcommand{\ul}[1]{\ensuremath{\underline{#1}}}
\newcommand{\cG}{\ensuremath{\mathcal{G}}}
\newcommand{\cV}{\ensuremath{\mathcal{V}}}
\newcommand{\fg}{\ensuremath{\mathfrak{g}}}
\newcommand{\mf}[1]{\ensuremath{\mathfrak{#1}}}
\newcommand{\C}{\ensuremath{\mathds{C}}}
\newcommand{\R}{\ensuremath{\mathds{R}}}
\newcommand{\N}{\ensuremath{\mathds{N}}}
\newcommand{\Z}{\ensuremath{\mathds{Z}}}
\newcommand{\gau}{\ensuremath{\operatorname{\mathfrak{gau}}}}
\newcommand{\aut}{\ensuremath{\operatorname{\mathfrak{aut}}}}
\newcommand{\Diff}{\ensuremath{\operatorname{Diff}}}
\newcommand{\Gau}{\ensuremath{\operatorname{Gau}}}
\newcommand{\im}{\ensuremath{\operatorname{im}}}
\newcommand{\tx}[1]{\ensuremath{\text{#1}}}
\newcommand{\GL}{\ensuremath{\operatorname{GL}}}
\newcommand{\se}{\ensuremath{\nobreak\subseteq\nobreak}}
\newcommand{\from}{\ensuremath{\nobreak:\nobreak}}
\renewcommand{\to}{\ensuremath{\nobreak\rightarrow\nobreak}}
\newcommand{\catpt}{\mathsf{Pt}}
\newcommand{\catspt}{\mathsf{SPt}}
\let\phi=\varphi
\begin{document}

\title{The diffeomorphism supergroup of a finite-dimensional supermanifold}
\author{Christoph Sachse$^a$, Christoph Wockel$^b$\\
\\
$^a$ Dept. of Mathematics, The University of Texas at Austin\\
1 University Station C1200, 78712 Austin, TX, USA \\
\texttt{chsachse@googlemail.com}\\[2em]
$^b$ Mathematisches Institut\\
Georg-August-Universit\"at G\"ottingen\\ 
Bunsenstra\ss e 3-5,
D-37073 G\"ottingen, Germany\\
\texttt{christoph@wockel.eu}
}
\date{}
\maketitle

\begin{abstract}
Using the categorical description of supergeometry we give an explicit construction of the diffeomorphism
supergroup of a compact finite-dimensional supermanifold. The construction provides the diffeomorphism supergroup
with the structure of a Fr\'echet supermanifold. In addition, we derive results about the structure of
diffeomorphism supergroups.
\end{abstract}

\section{Introduction}
Groups of smooth diffeomorphisms are of great importance for numerous applications in geometry, global analysis
and mathematical physics. To give these groups the structure of a Lie group is, however, often a quite non-trivial
task due to the fact that in general one can only endow spaces of smooth maps with a Fr\'echet structure. In almost all
cases of interest, Banach structures are unavailable (cf.\cite[Cor.\
IX.1.7]{Neeb06Towards-a-Lie-theory-of-locally-convex-groups} and
\cite{Omori78On-Banach-Lie-groups-acting-on-finite-dimensional-%
manifolds}).
This makes for an analytically much more challenging situation.

While these difficulties have been overcome for ordinary smooth
manifolds decades ago (cf.\ \cite{Neeb06Towards-a-Lie%
-theory-of-locally-convex-groups} and references therein), no similar
results are available yet for supermanifolds because a theory of
infinite-dimensional supermanifolds has never been systematically
developed. The foundation for such a theory has been laid by Molotkov
already in 1984 \cite{Molotkov94Infinite-dimensnional-Z2k-super-manifolds} but was not really
appreciated at that time. We will follow this line of thought, building
on the results of \cite{Sachse08A-Categorical-%
Formulation-of-Superalgebra-and-Supergeometry}, which works out a
categorical description of supergeometry in detail. This description
makes Banach- and Fr\'echet supermanifolds available, among other
things.

In this article we show that the supergroup of diffeomorphisms of a compact finite-dimensional
supermanifold can be given the structure of a Fr\'echet supermanifold, using the formalism of \cite{Sachse08A-Categorical-%
Formulation-of-Superalgebra-and-Supergeometry}.
To arrive at this assertion, we establish a structure theorem for diffeomorphism supergroups which shows that
superdiffeomorphisms can be factorized in a particular way which allows to decompose the supergroup into
a sequence of semidirect products. This enables us to treat the underlying group separately. Here is where the
main analytic difficulties have to be overcome. The remaining part of the supergroup (the ``higher points'') is
then easier to deal with.

\section{Categorical description of supermanifolds}

We will only give a very condensed review of the categorical description
of supermanifolds. For more details see \cite{Sachse08A-Categorical-%
Formulation-of-Superalgebra-and-Supergeometry} and
\cite{Molotkov%
94Infinite-dimensnional-Z2k-super-manifolds}. 

The main idea of this approach is to
first set up a proper notion of a superset (as a functor) and then to develop all more
advanced concepts from this basic notion. Recall that an ordinary set
$X$ can be described as $\Hom_{\catsets}(\{*\},X)$ where $\{*\}$ is a one-point set.
Even more trivially, $X$ can be viewed as a functor $\catpt\to \catsets$ (where $\catpt$ is a category
with one element and its identity morphism) and a map is a natural
transformation between two such functors.

From this point of view, a
superset will be a functor from a category $\catspt$ of "superpoints" to
$\catsets$. Consequently, a supermanifold will be defined to be a
superset, which is locally isomorphic to certain subfunctors of
$\catspt\to \catvect$. The great advantage of this rather abstract
formalism is that it can treat infinite-dimensional supermanifolds on
the same footing as finite-dimensional ones, in contrast with the usual
ringed-space approach. 

\subsection{The Category of supermanioflds}

Throughout this article, the terms ``super vector space''
and $\intZ_2$ graded vector space are used synonymously. On the level of
vector spaces (or, more generally, modules over superrings) these two notions are identical.
The difference lies in the braiding of these categories, i.e., in the notion of
supercommutativity.

\begin{dfn}
 The category $\catgr$ of finite-dimesnional Grassmann algebras has for
 each $n\in \N_{0}$ an object $\Lambda_{n}$, which is the the (isomorphism class of any) free
 supercommutative algebra on $n$ odd generators.\footnote{i.e.,
 $\Lambda_{n}\cong \Lambda^{\bullet}(\R^{n})=\Lambda^{\op{even}}(\R^{n})\oplus \Lambda^{\op{odd}}(\R^{n})$,
 which is $\Z_{2}$-graded and satisfies
 $v\wedge w=(-1)^{|v|\cdot |w|}w\wedge v$.} Morphisms in $\catgr$ are
 morphisms of $\Z_{2}$-graded algebras. The category $\catspt$ of
 finite-dimensional super points has objects
 $\cp(\Lambda_{n}):=(\{*\},\Lambda_{n})$, i.e. the one-point space $\{*\}$ endowed with the structure sheaf
 $\Lambda_{n}$ and morphisms
 $(\id,\varphi^*)\from (\{*\},\Lambda_{m})\to(\{*\},\Lambda_{n})$ for
 $\varphi\from \Lambda_{n}\to\Lambda_{m}$ a morphism in $\catgr$.
\end{dfn}

Obviously, $\catspt$ is dual to $\catgr$ and thus
$\catsets^{\catspt^\circ}\cong\catsets^{\catgr}$.\footnote{The category of covariant functors $\catc\to\catd$ will be denoted as $\catd^\catc$.} With this said, the basic idea of "superification"
is quite clear, one has to rephrase each classical concept in terms of
the functor category $\catsets^{\catgr}$. The way how to achieve this can be subtle, though, because we
have to make sure the resulting functors really describe the known super objects like, e.g., super vector
spaces. Just like not all functors $\catc^\circ\to\catsets$ describe objects of $\catc$, i.e., are representable,
not all functors in $\catsets^\catgr$ of some given type will represent a super object. For example, not all
functors $\catgr\to\catvect$ actually describe super vector spaces. Below we will briefly state which such
functors are \emph{superrepresentable}. For more details, the reader is referred to \cite{Sachse08A-Categorical-%
Formulation-of-Superalgebra-and-Supergeometry} and
\cite{Molotkov%
94Infinite-dimensnional-Z2k-super-manifolds}.

As a starting point one rephrases superalgebra as algebra in the functor category $\catsets^\catgr$. To each
super vector space $V$ one associates a functor $\olv\in\catsets^\catgr$ as follows:

\begin{ex}\label{ex:superrepresentableOlrModlue}
 For each $\Z_{2}$-graded (= super) vector space $V$ we obtain a functor
 $\ol{V}\from \catgr\to\catsets$, defined by
 \begin{align*}
  \olv:\Lambda_{n} & \mapsto  (\Lambda_{n}\otimes V)_{\bar{0}},\\
  \varphi:\Lambda_{n}\to\Lambda_{m} &\mapsto 
  \varphi\otimes\id_V\big|_{\olv(\Lambda_{n})}.
 \end{align*}
 This is a module over the superring $\olr$, obtained from plugging
 $\realR$ into the above definition. Moreover, if $f\from V_1\times\ldots\times V_n\to V$ is
 a multilinear parity preserving map between super vector spaces, then we define a natural
 transformation
 \begin{eqnarray}
 \label{fbar}
\overline{f}:\olv_1\times\ldots\olv_n &\to & \olv\\
\nonumber
\overline{f}_\Lambda(\lambda_1\otimes v_1,\ldots,\lambda_n\otimes v_n) &\mapsto & \lambda_n\cdots\lambda_1 \otimes f(v_1,\ldots,v_n)
 \end{eqnarray}
 This results in a functor
 $\ol{\cdot}\from \catsvect\to\catmod_{\olr}\subset\catsets^\catgr$.
\end{ex}

The functor $\overline{\cdot}$ can be shown to be fully faithful
\cite[Cor.\ 3.2]{Sachse08A-Categorical-%
Formulation-of-Superalgebra-and-Supergeometry}. An object $\cv\in\catsets^\catgr$ in
the essential image of $\overline{\cdot}$ is called a superrepresentable
$\olr$-module. These superrepresentable $\olr$-modules play the role in
super-differential geometry that vector spaces play in ordinary
differential geometry.

(Smooth) supermanifolds are now defined as functors $\catgr\to\catsets$
which are locally modeled on superrepresentable $\olr$-modules. Note
that if we restrict $\ol{\cdot}$ to the category of locally convex
vector spaces and continuous linear maps, then we can endow each of the
vector spaces $\olv(\Lambda_{n})$ in the image of a functor $\olv$ with
a topology, because its definition only involves tensor products with
the finite-dimensional vector spaces $\Lambda_{n}$. Moreover all induced
maps $\olv(\varphi)$ (for $\varphi\from \Lambda_{n}\to \Lambda_{m}$ a
morphism in $\catgr$) become continuous, and $\olv$ is actually an
object of $\cattop^\catgr$. The category $\cattop^\catgr$ can be given a
Grothendieck topology by pulling back the global classical topology on
$\cattop$ \cite{Sachse08A-Categorical-%
Formulation-of-Superalgebra-and-Supergeometry}. In the following we will assume
$\cattop^\catgr$ and all its relevant subcategories to be endowed with
this topology. The topology on $\cattop^\catgr$ in particular provides
the notion of an open subfunctor of a superrepresentable $\olr$-module.
Note that the treatment of infinite-dimensional super manifolds
is tacitly covered by this approach.

We will be particularly interested in the case where $V$ has been endowed
with the structure of a Fr\'echet space. Functors which are isomorphic
to open subfunctors of such superrepresentable Fr\'echet $\olr$-modules
will be called Fr\'echet superdomains. There is a natural notion of
supersmooth morphisms between such superdomains \cite[Sect.\
4.2]{Sachse08A-Categorical-%
Formulation-of-Superalgebra-and-Supergeometry}, allowing for the following definition (cf.\
\cite[Sect.\ 4.4]{Sachse08A-Categorical-%
Formulation-of-Superalgebra-and-Supergeometry}).

\begin{dfn}
 Denoting by $\catman$ the category of smooth Fr\'echet manifolds, a
 supermanifold $\cm$ is a functor $\catgr\to \catman$ endowed with a maximal atlas.
 An atlas consists of
 \begin{itemize}
  \item an open cover
        $\{\cu_\alpha\to\cm\}_{\alpha\in A}$ by Fr\'echet superdomains
        such that
  \item each pullback $\cu_{\alpha\beta}=\cu_\alpha\times_\cm\cu_\beta$
        is a superdomain and
  \item the canonical projections
        $\Pi_{\alpha,\beta}:\cu_{\alpha\beta}\to\cu_\alpha,\cu_\beta$
        are supersmooth.
 \end{itemize}
 A morphism $\phi:\cm\to\cm'$ of supermanifolds is a natural transformation in $\catman^\catgr$ such that
 for every chart $u:\cu\to\cm$ and $u':\cu'\to\cm'$ the diagram
 \[
 \xymatrix{
  \cu\times_{\cm'}\cu' \ar[d]_{\pi}\ar[rr]^{\pi'} && \cu' \ar[d]^{u'}\\
  \cu \ar[r]^{u} & \cm \ar[r]^{\phi} & \cm'
 }\]
 commutes. As usual, two atlases are equivalent if their union is again an atlas. This entails the notion of
 a maximal atlas.
\end{dfn}

Together with the corresponding supersmooth morphisms, we will denote by $\catsman$ the category of
Fr\'echet supermanifolds.

\subsection{Inner Hom objects in $\catsman$}
\label{sect:smanihom}

The subcategory $\catspoint\subset\catsman$ of super points plays a special role for the category of supermanifolds,
analogous to that played by the one-point manifold for the category of ordinary manifolds. This is best seen from
the fact
\cite{Sachse08A-Categorical-Formulation-of-Superalgebra-and-Supergeometry} that
\[
\Hom(\cp(\Lambda),\cm)\cong\cm(\Lambda)
\]
for all $\Lambda\in\catgr$ and any supermanifold $\cm$. Moreover, this isomorphism is functorial in $\Lambda$ as well
as in $\cm$. So the $\Lambda$-points (i.e., the sets $\cm(\Lambda)$) of $\cm$ are indeed given by all the 
possible maps of $\cp(\Lambda)$ into $\cm$.

An important consequence for our purpose is that this gives a hint on how to describe inner Hom objects in
$\catsman$. An inner Hom object $\ihom(B,C)$ in any category $\catc$ is required to satisfy the adjunction formula \cite{Mac-Lane98Categories-for-the-working-mathematician}
\[
\Hom(A,\ihom(B,C))\cong\Hom(A\times B,C)\qquad\forall\,\,A,B,C\in\catc.
\]
Therefore, given two supermanifolds $\cm,\cn$ the $\Lambda$-points of $\ihom(\cm,\cn)$ are given by
\[
\ihom(\cm,\cn)(\Lambda)\cong\Hom_\catsman(\cp(\Lambda)\times\cm,\cn).
\]
This is as stated only a relation between sets.
The hard part is, of course, to give these sets manifold structures such that $\ihom(\cm,\cn)$ becomes a
supermanifold. If $\cm,\cn$ are not discrete then this will, at best, be possible within the category of
Fr\'echet supermanifolds. 

In general, the study of such inner Hom objects is an analytically very challenging problem already for ordinary
manifolds. We will only attempt to make this notion precise in two cases in this paper: we will define and study
the \emph{space} of sections of a super vector bundle over a supermanifold. As one may expect, it will turn out to
be a superrepresentable $\olr$-module. Although this is of course expected it is not obvious, in contrast to ordinary geometry, because even the notion of a section over a space which is not described by its underlying topological points is a bit involved.
The second example and overall goal will be the explicit construction of the diffeomorphism supergroup $\sdiff(\cm)$ of a compact supermanifold studied below. This supergroup will turn out to be a subobject of $\ihom(\cm,\cm)$ in a
way that we will make precise. 

\section{Supergroups}
\label{sect:sgrp}

The most well-known example of a supergroup is the following:

\begin{dfn}
\label{def:sgroup}
A Lie supergroup is a group object in the category of supermanifolds.
\end{dfn}

More explicitly, a supermanifold $\cg$ is turned into a supergroup by specifying morphisms
\begin{eqnarray*}
m:\cg\times\cg &\to &\cg\\
i:\cg &\to &\cg\\
e:\realR^0=\{*\} &\to &\cg
\end{eqnarray*}
which satisfy a number of diagrams encoding the axioms of a group \cite{Mac-Lane98Categories-for-the-working-mathematician}. For example, associativity amounts in this
language to the condition
\[
m\circ(m\times \id_\cg)=m\circ(\id_\cg\times m).
\]
Instead of requiring the commutativity of certain diagrams one can equivalently require that the set
of $T$-points $\cg(T)=\Hom(T,\cg)$ is a group for every supermanifold $T$ and that this family of groups is
natural in $T$, i.e., that multiplication, inversion and unit are given by the induced maps 
$m_T:\cg(T)\times\cg(T)\to\cg(T)$ and $i_T, e_T$, respectively.

That definition \ref{def:sgroup} only deals with \emph{Lie} supergroups reflects the fact that at first it seems unclear how to generalize the concept of a group as a set with a certain structure to something ``super''. 
One way to escape this limitation is to give up thinking of structured sets, as indeed suggested by Def. \ref{def:sgroup}.
In view of the categorical formulation sketched in the previous section, we should rather think of a family
of sets related by functoriality in $\catgr$:

\begin{dfn}
A supergroup is a group object in $\catsets^\catgr$.
\end{dfn}

This obviously includes Lie supergroups as defined above, but also more general objects. As a subcategory we
obtain, for example, ``topological supergroups'', which we define as groups in $\cattop^\catgr$.
The study of these more general supergroups should be interesting in its own right. In addition, the orbits
and orbit spaces of supergroup actions on supermanifolds often turn out not to be supermanifolds. However,
they are always objects in $\catsets^\catgr$ which suggests this topos as the natural ``habitat'' to study
supergroups.
In this work, however, we will restrict ourselves to supergroups which can be endowed with the structure of
a supermanifold.

Let $\cg$ be a group object in $\catsets^\catgr$. Then every $\cg(\Lambda)$ is a group, i.e., $\cg$ is actually
a functor $\catgr\to\catgrp$. The initial and terminal morphisms $c_\Lambda:\realR\to\Lambda$ and
$\epsilon_\Lambda:\Lambda\to\realR$ induce homomorphisms
\[
\cg(c_\Lambda):\cg(\realR)\to\cg(\Lambda),\qquad \cg(\epsilon_\Lambda):\cg(\Lambda)\to\cg(\realR).
\]
Since $\epsilon_\Lambda\circ c_\Lambda=\id_\realR$, $\cg(c_\Lambda)$ is a monomorphism and $\cg(\epsilon_\Lambda)$ is an
epimorphism. This means that for every $\Lambda\in\catgr$ we can write
\begin{equation}
\label{eq:e1}
\cg(\Lambda)= \cn(\Lambda)\rtimes G
\end{equation}
where $G:=\cg(\realR)\cong \im(\cg(c_{\Lambda}))$ and $\cn(\Lambda):=\ker(\cg(\epsilon_{\Lambda}))$.

We can even say more. For every morphism $\phi:\Lambda\to\Lambda'$ in $\catgr$ we have that $\epsilon_{\Lambda'}\circ\phi=\epsilon_\Lambda$. Thus
\[
\xymatrix{
\cg(\Lambda) \ar[d]_{\cg(\epsilon_\Lambda)}\ar[r]^{\cg(\phi)}& \cg(\Lambda')\ar[d]^{\cg(\epsilon_{\Lambda'})}\\
G \ar[r]^{\id_G}& G
}
\]
commutes. Therefore \eqref{eq:e1} can be read as a component equation for the splitting
\begin{equation}
\label{eq:e2}
\cg=\cn\rtimes G
\end{equation}
where $G$ is interpreted as the constant functor $\catgr\to\catgrp$ with value $G$ which sends each morphism to
$\id_G$ and $\cn$ is the supergroup $\Lambda\mapsto \cn(\Lambda)$ and $\phi\mapsto \cg(\phi)\big|_{\cn(\Lambda)}$
for all morphisms $\phi$ in $\catgr$.

Let us now assume $\cg$ is a Lie supergroup. This implies that all $\cg(\Lambda)$ are Lie groups which moreover
have a rather special structure. We again have the maps $\cg(\epsilon_\Lambda)$, $\cg(c_\Lambda)$ with their
respective properties. The Lie supergroup $\cg$ is locally modeled on a linear superspace which we may identify with
its Lie superalgebra $\fg=\fg_{\bar{0}}\oplus\fg_{\bar{1}}$. In particular, there has to exist a superchart $\phi\from \cu\to \cG$ around 
the identity. The underlying chart $\phi_{\realR}$ is a chart around $1$ for $G$ which we
may identify with a map $\phi_\realR:\fg_{\bar{0}}\supset U\to G$. This map might not be the exponential map if we
are in the infinite-dimensional context.

The existence of a superchart means that we can extend $\phi_\realR$ for each $\Lambda$ to a chart
$\phi_\Lambda:\cu(\Lambda)\to\cg(\Lambda)$ where $\cu$ is an open superdomain in $\fg$. The fibers of the map
\[
\cg(\epsilon_\Lambda):\cg(\Lambda)\to\cg(\realR)
\]
are therefore linear spaces isomorphic to
\[
(\fg_{\bar{0}}\otimes\Lambda_{\bar{0}}^{nil})\oplus(\fg_{\bar{1}}\otimes\Lambda_{\bar{1}})
\]
where $\Lambda_{\bar{0}}^{nil}$ denotes the nilpotent ideal in $\Lambda_{\bar{0}}$.

These linear spaces do not form a superrepresentable $\olr$-module \cite{Sachse08A-Categorical-Formulation-of-Superalgebra-and-Supergeometry}, which means that one cannot model a supermanifold on them. Similarly, a constant functor $\catgr\to\catman$ cannot be a supermanifold.
Consequently the direct sum splitting \eqref{eq:e2} cannot exist in the category of Lie supergroups.
Nonetheless it turns out to be very useful in the construction of supercharts.
In our discussion of the supergroup of diffeomorphisms of a supermanifold below we will exhibit the
splitting \eqref{eq:e1} explicitly.

\section{Super vector bundles}
\label{sect:superVectorBundles}

\subsection{Definition}

In this section we will present a brief but hopefully self-contained treatment of super vector bundles in
the categorical approach.

The construction of super vector bundles is formally completely analogous to that of ordinary vector bundles.
The definition we will present was first given in \cite{Molotkov%
94Infinite-dimensnional-Z2k-super-manifolds}. A trivial smooth super vector bundle
is given by $\pi_\cm:\cm\times\cv\to\cm$, where $\cm$ is a smooth supermanifold, $\pi_\cm$ is the canonical projection and $\cv$ is a linear supermanifold, i.e., a topological 
superrepresentable $\olr$-module.
Morphisms are pairs $(f:\cm\to\cm',g:\cm\times\cv\to\cm'\times\cv')$ such that 
\[
\pi_{\cm'}\circ g=f\circ\pi_\cm
\]
and such that $\pi_{\cv'}\circ g:\cm\times\cv\to\cv'$ is a 
$\cm$-family \cite{Molotkov94Infinite-dimensnional-Z2k-super-manifolds}, \cite{ DeligneMorgan99Notes-on-supersymmetry} of isomorphisms of $\olr$-modules.
The latter condition is the categorified version of being a fiberwise isomorphism. The term ``fiber'' must
be used with caution when speaking about super vector bundles because the base manifold is not described as
a collection of ordinary topological points.
Thus trivial super vector bundles are certain functors $\catgr\to\catvbun$, where $\catvbun$ are smooth super vector
bundles over a smooth base. 

Note that every functor $\ce\in\catvbun^\catgr$ gives rise to a functor $\cm\in\catman^\catgr$
by assigning to every component bundle its base manifold.

\begin{dfn}
Let $\ce,\ce'$ be functors in $\catvbun^\catgr$, and let $\cm,\cm'$ be their associated base functors in
$\catman^\catgr$. Then $\ce$ is said to be an open subfunctor of $\ce'$, denoted $\ce\subset\ce'$, if
\begin{enumerate}
\item $\cm$ is an open subfunctor of $\cm'$, and
\item for each $\Lambda\in\catgr$ we have $\pi_\Lambda^{-1}(\cm(\Lambda))={\pi'}_{\Lambda}^{-1}(\cm(\Lambda))$,
\end{enumerate}
where $\pi_\Lambda:\ce(\Lambda)\to\cm(\Lambda)$ is the projection to the base.
\end{dfn}

A morphism $\ce''\to\ce$ of functors in $\catvbun^\catgr$
is called open if it can be factorized as a composition
\begin{equation*}
\begin{CD}
\ce'' @>f>> \ce' \subset\ce
\end{CD},
\end{equation*}
where $f$ is an isomorphism of functors and $\ce'$ is an open subfunctor of $\ce$. An open 
covering $\{\ce_\alpha\}_{\alpha\in A}$ of $\ce\in\catvbun^\catgr$
is then a collection of open morphisms $\{\phi_\alpha:\ce_\alpha\to\ce\}_{\alpha\in A}$, such 
that the associated maps
$\{\pi\circ\phi_\alpha\}_{\alpha\in A}$ are an open covering of the functor $\cm:\catgr\to\catman$ 
associated with 
$\ce$. In analogy with supermanifolds, a supervector bundle is a functor in $\catvbun^\catgr$ endowed with an
atlas of trivial open subbundles.

\begin{dfn}
\label{def:svbun}
Let $\ce$ be a functor in $\catvbun^\catgr$, and let $\cm\in\catman^\catgr$ be its associated functor
of base manifolds. Let $\ca=\{\phi_\alpha:\ce_\alpha\to\ce\}_{\alpha\in A}$ be an open covering of $\ce$. 
Then this covering is an atlas of a
super vector bundle $\ce$ over the supermanifold $\cm$ if the following conditions hold:
\begin{enumerate}
\item each of the $\ce_\alpha$ is a trivial super vector bundle $\cu_\alpha\times\cv_\alpha$, and
$\cv_\alpha\cong\cv_\beta$ for all $\alpha,\beta\in A$, and
\item for each $\alpha,\beta\in A$, the overlaps
\begin{equation*}
\xymatrix{
\ce_\alpha\times_\ce\ce_\beta \ar[r]^(.6){\pi_\alpha}\ar[d]_{\pi_\beta}& \ce_\alpha\ar[d]^{\phi_\alpha}\\
\ce_\beta \ar[r]^{\phi_\beta}& \ce
}
\end{equation*}
can be given the structure of a trivial super vector bundle in such a way that the projections
$\pi_\alpha,\pi_\beta$ become morphisms of trivial super vector bundles.
\end{enumerate}
Two atlases $\ca$ and $\ca'$ are equivalent, if their union $\ca\cup\ca'$ is again an atlas. A super
vector bundle $\ce$ is a functor in $\catvbun^\catgr$ together with an equivalence class of atlases.
\end{dfn}

The second condition is necessary because the fiber product in the diagram is constructed as the fiber
product in $\catvbun^\catgr$. We thus have to make sure that it actually exists in the subcategory of
trivial super vector bundles. Note also that
the requirement that the transition functions be morphisms of trivial super vector bundles automatically
turns $\cm$ into a supermanifold.

\begin{dfn}
\label{def:svbunmor}
Let $\ce,\ce'$ be super vector bundles with open coverings $\{\phi_\alpha:\ce_\alpha\to\ce\}_{\alpha\in A}$ 
and
$\{\phi_{\alpha'}:\ce'_{\alpha'}\to\ce'\}_{\alpha'\in A'}$. A functor morphism $\Phi:\ce\to\ce'$ in 
$\catvbun^\catgr$ is a
morphism of super vector bundles if for all $\alpha\in A$ and all $\alpha'\in A'$, the pullbacks
\begin{equation*}
\xymatrix{
\ce_\alpha\times_{\ce'}\ce_{\alpha'} \ar[rr]^{\pi_{\alpha'}} \ar[d]_{\pi_{\alpha}} &&%
\cu_{\alpha'} \ar[d]^{\phi_{\alpha'}}\\
\cu_\alpha \ar[r]^{\phi_\alpha} & \ce \ar[r]^{\Phi} & \ce'
}
\end{equation*}
can be chosen such that $\ce_\alpha\times_{\ce'}\ce_{\alpha'}$ is a trivial super vector bundle and
the projections $\pi_\alpha,\pi_{\alpha'}$ are morphisms of trivial super vector bundles.
\end{dfn}

Definitions \ref{def:svbun} and \ref{def:svbunmor} yield a category $\catsvbun$ which is obviously
a subcategory of $\catvbun^\catgr$ but not a full one (for basically the same reason for which
$\catman^\catgr$ is not a full subcategory of $\catsman$, cf. \cite{Sachse08A-Categorical-Formulation-of-Superalgebra-and-Supergeometry}).
 One can define super vector bundles in terms
of cocycles with values in a Lie supergroup as well \cite{Molotkov%
94Infinite-dimensnional-Z2k-super-manifolds} but we will not attempt to do this here.

\begin{prop}
\label{trivsvbun}
A super vector bundle $\pi:\ce\to\cm$ is trivial if and only if all of its $\Lambda$-points
$\pi_\Lambda:\ce(\Lambda)\to\cm(\Lambda)$ are trivial bundles.
\end{prop}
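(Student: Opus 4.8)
The plan is to phrase triviality as the vanishing of a Čech obstruction for the gauge cocycle and to split that obstruction into a \emph{body} part, governed by the hypothesis, and a \emph{nilpotent} part, which is always soft. The forward implication is immediate: an isomorphism $\ce\cong\cm\times\cv$ in $\catsvbun$ is in particular a natural isomorphism of functors, so evaluating at each $\Lambda$ gives a bundle isomorphism $\ce(\Lambda)\cong\cm(\Lambda)\times\cv(\Lambda)$, i.e. a trivialization of $\pi_\Lambda$. For the converse I would fix an atlas $\{\phi_\alpha:\cu_\alpha\times\cv\to\ce\}$ as in Definition \ref{def:svbun} and record its transition morphisms as supersmooth maps $g_{\alpha\beta}:\cu_{\alpha\beta}\to\cG$ into the gauge supergroup $\cG:=\ul{\Aut}(\cv)$; here one uses that a morphism of trivial super vector bundles over a fixed base is exactly an $\cm$-family of $\olr$-module automorphisms of $\cv$. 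Triviality of $\ce$ is then equivalent to the cocycle $(g_{\alpha\beta})$ being a coboundary, i.e. to supersmooth $h_\alpha:\cu_\alpha\to\cG$ with $g_{\alpha\beta}=h_\alpha^{-1}h_\beta$ \emph{naturally} in $\Lambda$. The hypothesis gives, for each fixed $\Lambda$, such a trivialization $(h_\alpha^\Lambda)$ of the ordinary bundle $\ce(\Lambda)$; the whole difficulty is to choose these coherently in $\Lambda$. To organize this I would invoke the semidirect splitting $\cG(\Lambda)=\cn(\Lambda)\rtimes G_0$ of Section \ref{sect:sgrp}, with body $G_0=\cG(\R)=\GL(V_{\bar{0}})\times\GL(V_{\bar{1}})$ and unipotent part $\cn(\Lambda)=\ker\cG(\epsilon_\Lambda)$, and treat the two factors in turn.

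First (the body) I would trivialize the $G_0$-valued cocycle $g_{\alpha\beta}(\R)$. Writing $M:=\cm(\R)$, this is the transition data of the ordinary ``associated graded'' bundle $E_0\to M$ with fibre $V=V_{\bar{0}}\oplus V_{\bar{1}}$ and block-diagonal structure group $G_0$. The key observation is that $E_0$ is precisely the pullback of $\ce(\Lambda_1)$ along $\cm(c_{\Lambda_1}):M\to\cm(\Lambda_1)$: by naturality of $g_{\alpha\beta}$ the pulled-back transition functions are $\cG(c_{\Lambda_1})(g_{\alpha\beta}(\R))=\diag(A_{\alpha\beta},D_{\alpha\beta})$ acting on $\cv(\Lambda_1)=V_{\bar{0}}\oplus\theta V_{\bar{1}}\cong V$. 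Since $\ce(\Lambda_1)$ is trivial by hypothesis, so is its pullback $E_0$. Choosing a $G_0$-trivialization $h_\alpha^0:U_\alpha\to G_0$ and lifting each to a supersmooth map $\cu_\alpha\to\cG$ with prescribed body (a standard superization of a classical smooth map into the finite-dimensional linear superspace $\ul{\End}(\cv)$, via Taylor expansion along the nilpotent coordinates as in the supersmooth calculus of \cite{Sachse08A-Categorical-Formulation-of-Superalgebra-and-Supergeometry}), I may adjust the atlas so that $g_{\alpha\beta}(\R)=\id$, i.e. so that $g_{\alpha\beta}$ now takes values in the unipotent part $\cn$.

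Second (the nilpotent part) I would trivialize the remaining $\cn$-valued cocycle, and here I expect no further input from the hypothesis, since $\cn$ is soft. Filtering $\cn$ by powers of the augmentation ideal and passing through $\log$, the associated graded is the superrepresentable $\olr$-module $\ol{\fn}$, so solving $g_{\alpha\beta}=h_\alpha^{-1}h_\beta$ order by order reduces at each filtration step, via Baker--Campbell--Hausdorff, to an \emph{additive} Čech problem $\xi_{\alpha\beta}=\eta_\beta-\eta_\alpha$ valued in a superrepresentable module, i.e. in sections over $\cm$ of a finite-dimensional coefficient space twisted by the now-trivial $E_0$. Such a problem is solved by a super partition of unity: choosing $\{\rho_\gamma\}$ subordinate to $\{U_\gamma\}$ on the paracompact (indeed compact) body $M$ and pulling the $\rho_\gamma$ back to even supersmooth functions, I set $\eta_\alpha:=\sum_\gamma\rho_\gamma\,\xi_{\gamma\alpha}$, which is manifestly natural in $\Lambda$. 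Assembling the finitely many graded corrections into $h_\alpha:\cu_\alpha\to\cn$ and composing with the body lifts yields a global coboundary, hence an isomorphism $\ce\cong\cm\times\cv$ in $\catsvbun$.

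The main obstacle is the naturality bookkeeping in the second step: I must check that the order-by-order solution, the $\log$/BCH manipulations and the partition-of-unity sums all produce genuine supersmooth natural transformations $h_\alpha$ (rather than merely a compatible family of classical trivializations), and that the graded coefficient sheaves really are soft, so that the additive first Čech cohomology vanishes. Conceptually the content is the splitting ``body plus nilpotent'': the hypothesis is used only to kill the topologically nontrivial body class, and there the single test algebra $\Lambda_1$ already suffices, whereas the odd and nilpotent directions contribute only soft, hence cohomologically trivial, coefficients.
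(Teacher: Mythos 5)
Your proposal is correct in substance (in the finite-dimensional setting), but it is a genuinely different --- and much heavier --- argument than the one in the paper. The paper's proof is a two-line unwinding of definitions: triviality of $\ce$ means the existence of a single isomorphism $f\from\ce\to\cm\times\cv$ in $\catsvbun$ with $\pi=\pi_\cm\circ f$, and the components $f_\Lambda$ of such a natural transformation are exactly trivializations of the ordinary bundles $\pi_\Lambda$; the authors read this componentwise commuting-diagram condition as ``precisely'' the componentwise triviality condition, in both directions. You instead treat the converse as what it really is --- a nontrivial assembly problem, since trivializations chosen separately for each $\Lambda$ need not be natural in $\Lambda$, let alone supersmooth --- and you solve it by a \v{C}ech-cohomological argument: reduce the transition cocycle to its body, kill the body class using triviality of the single point $\ce(\Lambda_1)$ (whose pullback along $\cm(c_{\Lambda_1})$ is the full graded bundle with fibre $V_{\bar{0}}\oplus V_{\bar{1}}$; note that $\ce(\realR)$ alone would only see $V_{\bar{0}}$, so your choice of $\Lambda_1$ is essential), and then trivialize the remaining unipotent cocycle order by order via $\log$, BCH and partitions of unity. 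This buys you strictly more than the paper states: triviality of the one point $\ce(\Lambda_1)$ already forces triviality of $\ce$, and the naturality issue that the paper's proof passes over silently is made explicit and actually resolved. What it costs is generality and length: the softness/partition-of-unity step needs a smoothly paracompact (in practice finite-dimensional) body $M$, whereas the proposition is stated for arbitrary Fr\'echet super vector bundles and assumes neither compactness (so you should invoke only paracompactness, not compactness, of $M$) nor finite dimension; moreover, with infinitely many odd directions your recursion is no longer finite, and you would have to argue through skeletons that each $\Lambda_n$-point involves only finitely many orders for the construction to define a genuine natural transformation.
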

\begin{proof}
The bundle $\pi:\ce\to\cm$ is trivial if and only if there exists an isomorphism $f:\ce\to\cm\times\cv$
for some superrepresentable $\olr$-module $\cv$ such that $\pi=\pi_\cm\circ f$. This means that for 
every $\Lambda\in\catgr$, the components of $f$ must make the diagram
\begin{equation*}
\xymatrix{f_\Lambda:\ce(\Lambda)\ar[rr] \ar[dr]_{\pi_\Lambda} & &
\cm(\Lambda)\times\cv(\Lambda) \ar[dl]^{\pi_{\cm,\Lambda}}\\
&\cm(\Lambda)&}
\end{equation*}
commutative. That is precisely the condition for the triviality of the ordinary vector bundle 
$\pi_\Lambda:\ce(\Lambda)\to\cm(\Lambda)$.
\end{proof}

\subsection{The tangent bundle $\boldsymbol{\ctm}$}

The tangent bundle $\ctm$ of a supermanifold $\cm$ is defined in the categorical framework as a functor
$\ctm:\catgr\to\catvbun$ in the following way: for every $\Lambda\in\catgr$ and every 
$\varphi:\Lambda\to\Lambda'$, set
\begin{eqnarray}
\label{eqn:tdef1}
\ctm(\Lambda) &:=& T(\cm(\Lambda)),\\
\nonumber
\ctm(\varphi) &:=& D(\cm(\varphi)):T(\cm(\Lambda))\to T(\cm(\Lambda')).
\end{eqnarray}
To every morphism $f:\cm\to\cm'$ of supermanifolds, we assign a functor morphism
\begin{eqnarray}
\label{eqn:tdef2}
\cd f:\ctm &\to& \ctm'\\
\nonumber
(\cd f)_\Lambda &:=& Df_\Lambda:T(\cm(\Lambda))\to T(\cm'(\Lambda)).
\end{eqnarray}
The assignments \eqref{eqn:tdef1} and \eqref{eqn:tdef2} define the tangent functor $\ct:\catsman\to\catvbun^\catgr$. 
For our definition of a super vector bundle to make sense, we would
certainly expect the tangent bundle to be in $\catsvbun$, not just in $\catvbun^\catgr$. This is indeed
the case:

\begin{prop}
The tangent functor is a functor $\ct:\catsman\to\catsvbun$.
\end{prop}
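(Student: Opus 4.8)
The plan is to exhibit an explicit atlas for $\ctm$ satisfying Definition \ref{def:svbun}, obtained by differentiating a supermanifold atlas of $\cm$. First I would analyze the local model. Let $\cu$ be a Fr\'echet superdomain, i.e.\ an open subfunctor of a superrepresentable $\olr$-module $\ol{V}$. For every $\Lambda\in\catgr$ the set $\cu(\Lambda)$ is an open subset of the locally convex space $\ol{V}(\Lambda)$, so its ordinary tangent bundle is canonically trivial, $T(\cu(\Lambda))\cong\cu(\Lambda)\times\ol{V}(\Lambda)$. Since each structure map $\cu(\varphi)$ is the restriction of the linear map $\ol{V}(\varphi)=(\varphi\otimes\id_V)|_{\ol{V}(\Lambda)}$, and the differential of a linear map is that map itself, these trivializations are natural in $\Lambda$. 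Hence $\ct\cu\cong\cu\times\ol{V}$ as objects of $\catvbun^\catgr$, which is a trivial super vector bundle.

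Next, fix an atlas $\{u_\alpha\from\cu_\alpha\to\cm\}_{\alpha\in A}$ of $\cm$ modeled on a fixed $V$ (the model is constant on each connected component, so one argues componentwise if necessary). Applying $\ct$ yields morphisms $\cd u_\alpha\from\ct\cu_\alpha\to\ctm$ in $\catvbun^\catgr$, and I would check these form an open covering: their base maps are the $u_\alpha$, which cover $\cm$, and since each $u_\alpha$ is an isomorphism onto an open subfunctor of $\cm$, each $u_{\alpha,\Lambda}$ is an open embedding, so $Du_{\alpha,\Lambda}$ is an open bundle embedding onto $\pi_\Lambda^{-1}(\im u_{\alpha,\Lambda})$. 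Thus $\cd u_\alpha$ is an isomorphism onto an open subfunctor of $\ctm$, i.e.\ an open morphism in the sense of the previous section.

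Condition (1) of Definition \ref{def:svbun} then holds by the first step: each $\ct\cu_\alpha\cong\cu_\alpha\times\ol{V}$ with common fiber $\ol{V}$. For condition (2), the fiber product in $\catvbun^\catgr$ is computed at each level $\Lambda$ as $T\cu_\alpha(\Lambda)\times_{T\cm(\Lambda)}T\cu_\beta(\Lambda)$. Because $Du_{\alpha,\Lambda}$ and $Du_{\beta,\Lambda}$ are open bundle embeddings into $T\cm(\Lambda)$, this fiber product is the tangent bundle over the overlap $\im u_{\alpha,\Lambda}\cap\im u_{\beta,\Lambda}\cong\cu_{\alpha\beta}(\Lambda)$ in the base, i.e.\ $T(\cu_{\alpha\beta}(\Lambda))$. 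These identifications are natural in $\Lambda$, so $\ct\cu_\alpha\times_{\ctm}\ct\cu_\beta\cong\ct\cu_{\alpha\beta}$. Since $\cu_{\alpha\beta}=\cu_\alpha\times_\cm\cu_\beta$ is again a superdomain by the supermanifold axioms, the first step shows this overlap is a trivial super vector bundle; and under these identifications the two projections become the differentials $\cd(\pr_\alpha),\cd(\pr_\beta)$ of the canonical supersmooth projections $\pr_\alpha,\pr_\beta\from\cu_{\alpha\beta}\to\cu_\alpha,\cu_\beta$, which are morphisms of trivial super vector bundles because differentiating a supersmooth map between superdomains is fiberwise linear and varies supersmoothly.

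The hard part will be condition (2): one must match the abstract fiber product $\ct\cu_\alpha\times_{\ctm}\ct\cu_\beta$, which by definition is formed componentwise in $\catvbun^\catgr$, with the tangent bundle $\ct\cu_{\alpha\beta}$ of the overlap superdomain, and verify that this identification is natural in $\Lambda$ and intertwines the canonical bundle projections with the differentials $\cd(\pr_\alpha),\cd(\pr_\beta)$. This is precisely where the supersmoothness of the transition data and the chain rule for the component differentials $D(\cm(\varphi))$ enter, and where one crucially uses that the atlas axioms guarantee $\cu_{\alpha\beta}$ to be an honest superdomain rather than merely an open subfunctor of $\ctm$.
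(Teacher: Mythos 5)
Your proposal is correct and follows essentially the same route as the paper's own proof: differentiate a supersmooth atlas of $\cm$, observe that tangent bundles of superdomains are trivial ($\ct\cu_\alpha\cong\cu_\alpha\times\cv$), identify the overlaps with $\ct\cu_{\alpha\beta}$ for the overlap superdomains, and note that the projections are differentials of supersmooth maps, hence families of $\olr$-linear morphisms compatible with the base maps and thus morphisms of trivial super vector bundles. The only difference is that you spell out the naturality-in-$\Lambda$ and fiber-product identifications that the paper dismisses as clear.
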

\begin{proof}
Choose a supersmooth atlas $\{u_\alpha:\cu_\alpha\to\cm\}_{\alpha\in A}$ of $\cm$.
Then all $\cu_\alpha$ are open domains in some superrepresentable
$\olr$-module $\cv$ \cite{Sachse08A-Categorical-%
Formulation-of-Superalgebra-and-Supergeometry}, so their tangent bundles are trivial:
\begin{equation*}
\ct\cu_\alpha\cong\cu_\alpha\times\cv.
\end{equation*}
It is clear that the tangent bundles $\{\ct\cu_\alpha\}_{\alpha\in A}$ of the coordinate domains form
an open cover of the functor $\ct\in\catvbun^\catgr$. It has to be shown that they form an
atlas satisfying the conditions of Definition \ref{def:svbun}.

By the definition of a supermanifold \cite{Sachse08A-Categorical-%
Formulation-of-Superalgebra-and-Supergeometry} each intersection $\cu_\alpha\times_\cm\cu_\beta$ has the structure of a 
superdomain itself, and the projections $\pi_\alpha,\pi_\beta:\cu_{\alpha\beta}\to\cu_\alpha,\cu_\beta$ are supersmooth. The tangent bundles are
related by the differentials, e.g., $\cd\pi_\alpha:\ct\cu_{\alpha\beta}\to\ct\cu_\alpha$. These are by definition
$\cu_{\alpha\beta}$-families of $\olr$-linear morphisms compatible with the base maps. So they are morphisms of
trivial super vector bundles.
\end{proof}

\subsection{Spaces of sections of super vector bundles}

In this Section we present a first application of the categorical
approach to supergeometry. We show that smooth sections of
finite-dimensional super vector bundles form superrepresentable
$\olr$-modules and therefore linear Fr\'echet supermanifolds. This might
seem intuitively clear from ordinary geometry but this intuition is
treacherous in supergeometry. For example, there is no naive notion of
fibers for a super vector bundle and a super vector space is not the
same as a linear supermanifold from the ringed space point of view.
Most of the proofs in this Section rely heavily on results of V. Molotkov \cite{M:private}.

Let $p:\ce\to\cm$ be a smooth super vector bundle over a compact supermanifold $\cm$. We would like
to enrich the set of sections 
\begin{equation*}
\Gamma(\cm,\ce):=\{\sigma\in \Hom_\catsman(\cm,\ce) |\,p\circ\sigma=\id_\cm\}
\end{equation*}
to a supermanifold. We thus have to extend $\Gamma(\cm,\ce)$
to a functor $\hat{\Gamma}:\catgr\to \catsets$ such that its value on
$\realR$ is $\Gamma(\cm,\ce)$. As usual, this can be accomplished by
studying sections of families of super vector bundles over superpoints. 

We define the functor $\gh(\cm,\ce):\catgr\to\catsets$ on the objects of $\catgr$ by setting
\begin{equation*}
\gh(\cm,\ce)(\Lambda):=\Gamma(\cp(\Lambda)\times\cm,\pi_\cm^*\ce).
\end{equation*}
Here, $\pi_\cm^*\ce$
denotes the pullback of $\ce$ along the projection
$\pi_\cm:\cp(\Lambda)\times\cm\to\cm$.
For a morphism $\varphi:\Lambda\to\Lambda'$, we define
\begin{eqnarray}
\label{svbungr}
\gh(\cm,\ce)(\varphi):\gh(\cm,\ce)(\Lambda) &\to& \gh(\cm,\ce)(\Lambda')\\
\nonumber
\sigma &\mapsto& \sigma\circ(\cp(\varphi)\times\id_\cm).
\end{eqnarray}

Note the similarity of this definition to that of inner Hom objects (Section \ref{sect:smanihom}). We do not want to
work out this similarity systematically but only remark that one may use it to introduce the notion of an
inner Hom object in the category of families over a supermanifold $\cm$.
In general, inner Hom objects and even more so functors of the type $\hat{\Gamma}$ for general fiber bundles
are notoriously difficult to endow with additional structure, e.g., supersmooth or superrepresentable $\olr$-module
structures. We will see, however, that this task is feasible here because all fibers are
linear supermanifolds.

Let us first note that the set $\scinfty(\cm,\olv)$ of supersmooth morphisms from a supermanifold $\cm$ into
a superrepresentable $\olr$-module carries a natural vector space structure: if $f,g:\cm\to\olv$ are morphisms then
we define
\[
(f+g)_\Lambda(u):=f_\Lambda(u)+g_\Lambda(u)
\]
and
\[
(r\cdot f)_\Lambda(u):=rf_\Lambda(u)
\]
for $r\in\realR$ and $u\in\cm(\Lambda)$.
If we look at a set of the form $\scinfty(\cm,\olv\oplus\overline{W})$ where $\ol{W}$ is another superrepresentable
$\olr$-module we can even conclude that this set is a $\intZ_2$-graded, i.e., super vector space. The even elements are simply defined to be maps into $\olv$, the odd ones maps into $\ol{W}$.

The following Lemma shows that $\gh(\cu,\cu\times\cv)$ is superrepresentable.

\begin{lemma}\label{lem:superSectionsAreSuperRepresenatble}
Let $\cu$ be a superdomain and $\cu\times\cv\to\cu$ a trivial super vector bundle over $\cu$. Then
\[
\gh(\cu,\cu\times\cv)\cong\overline{\scinfty(\cu,\cv\oplus\Pi\cv)}
\]
as $\olr$-modules.
\end{lemma}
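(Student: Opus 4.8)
The plan is to unwind both sides to concrete mapping spaces and then match them by a Grassmann--Taylor expansion along the superpoint factor. First I would reduce the left-hand side to a space of morphisms into the fibre. Since $\cu\times\cv\to\cu$ is trivial, its pullback along the projection $\pi_\cu\colon\cp(\Lambda)\times\cu\to\cu$ is again the trivial bundle $(\cp(\Lambda)\times\cu)\times\cv$ with the same fibre $\cv$. A section of a trivial bundle $X\times\cv\to X$ is the same datum as its $\cv$-component, a supersmooth morphism $X\to\cv$; applying this with $X=\cp(\Lambda)\times\cu$ yields a bijection, natural in $\Lambda$,
\[
\gh(\cu,\cu\times\cv)(\Lambda)\;\cong\;\scinfty(\cp(\Lambda)\times\cu,\cv),
\]
under which the structure map $\gh(\varphi)$ of \eqref{svbungr} becomes precomposition with $\cp(\varphi)\times\id_\cu$.

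The heart of the argument is then to produce a natural bijection
\[
\scinfty(\cp(\Lambda)\times\cu,\cv)\;\cong\;\bigl(\Lambda\otimes\scinfty(\cu,\cv\oplus\Pi\cv)\bigr)_{\bar 0}\;=\;\overline{\scinfty(\cu,\cv\oplus\Pi\cv)}(\Lambda),
\]
the right-hand equality being the definition of the bar functor (Example~\ref{ex:superrepresentableOlrModlue}). The idea is that, because the function algebra of $\cp(\Lambda)$ is the Grassmann algebra $\Lambda=\Lambda_n$, every morphism $F\colon\cp(\Lambda)\times\cu\to\cv$ admits a unique expansion $F=\sum_{I}\theta_I\, f_I$ along the odd generators $\theta_1,\dots,\theta_n$, where $I$ runs over subsets of $\{1,\dots,n\}$ and $\theta_I$ denotes the corresponding monomial. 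Since morphisms of supermanifolds are even, each coefficient $f_I$ must carry the parity of $\theta_I$, so $f_I\in\scinfty(\cu,\cv)$ when $|I|$ is even and $f_I\in\scinfty(\cu,\Pi\cv)$ when $|I|$ is odd; equivalently, $\sum_I\theta_I\otimes f_I$ lies in the even part of $\Lambda\otimes\scinfty(\cu,\cv\oplus\Pi\cv)$. I would set up this correspondence first for a linear model $\cu=\overline{W}$, where the expansion is most transparent, and then obtain the general superdomain case by restriction to the open subfunctor $\cu\subseteq\overline{W}$.

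Finally I would verify that the composite bijection is natural and $\olr$-linear. Naturality amounts to checking that precomposition with $\cp(\varphi)\times\id_\cu$ corresponds, under the expansion, to replacing each $\theta_I$ by $\varphi(\theta_I)$, that is, to the map $\varphi\otimes\id$ acting on $\Lambda\otimes\scinfty(\cu,\cv\oplus\Pi\cv)$; this is exactly the transition map of $\overline{\scinfty(\cu,\cv\oplus\Pi\cv)}$, so the family of bijections assembles into an isomorphism of functors. Compatibility with the $\olr$-module structures is immediate from the pointwise definitions of addition and scalar multiplication recorded before the statement. Splicing the two bijections together then yields the claimed isomorphism of $\olr$-modules.

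The main obstacle is the middle step: justifying that every $F$ really does expand with \emph{supersmooth} coefficient maps, that these coefficients are uniquely determined, and that every admissible family of coefficients arises from some $F$. This is precisely the ``skeleton'' description of supersmooth morphisms, and in the Fr\'echet setting the delicate point is the smoothness (not merely the existence) of the coefficient maps $f_I$. Here I would lean on Molotkov's structure theory for supersmooth maps \cite{M:private}, \cite{Sachse08A-Categorical-Formulation-of-Superalgebra-and-Supergeometry}, which furnishes exactly this correspondence between a morphism and its odd Taylor coefficients.
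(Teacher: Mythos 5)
Your proposal is correct and follows essentially the same route as the paper: both reduce sections of the trivial pullback bundle to $\scinfty(\cp(\Lambda)\times\cu,\cv)$ and then use the skeleton description of supersmooth maps to identify this with $\bigl(\Lambda\otimes\scinfty(\cu,\cv\oplus\Pi\cv)\bigr)_{\bar 0}$. The only difference is one of detail: the ``Grassmann--Taylor expansion'' whose justification you defer to the cited structure theory is exactly what the paper works out explicitly, by decomposing $\Sym^\bullet(\realR^n\oplus V'_{\bar 1},V)$ into $\Lambda_{n,\bar 0}\otimes\Sym^\bullet(V'_{\bar 1},V)\oplus\Lambda_{n,\bar 1}\otimes\Sym^\bullet(V'_{\bar 1},\Pi V)$ at the level of skeletons.
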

\begin{proof}
We have
\[
\gh(\cu,\cu\times\cv)(\Lambda)\cong\scinfty(\cp(\Lambda)\times\cu,\cv)
\]
and $\cv\cong\olv$ for some super vector space $V$.
On the other hand $\cu=\olv'\big|_U$
for some super vector space $V'$ because $\cu$ was assumed to be a
superdomain\footnote{This means $U\subset V'_{\bar{0}}$ open and $\cu(\Lambda)=\olv'(\epsilon_\Lambda^{-1})(U)$ for all $\Lambda$ in $\catgr$. Cf. \cite{Sachse08A-Categorical-Formulation-of-Superalgebra-and-Supergeo%
metry}}. Since 
\[
\cp(\Lambda_n)(\Lambda)\cong\Hom(\Lambda_n,\Lambda)\cong\Lambda_{\bar{1}}\otimes\realR^n\cong\realR^{0|n} ,
\]
we have
\[
\cp(\Lambda_n)\times\cu=\cp(\Lambda_n)\times\olv'\big|_U\cong\overline{\realR^{0|n}\oplus V'}\big|_U.
\]

As shown in \cite{Sachse08A-Categorical-%
Formulation-of-Superalgebra-and-Supergeometry} the set $\scinfty(\cp(\Lambda)\times\cu,\cv)$ can be identified with the
set of ``skeletons'' of such supersmooth maps. A skeleton of a morphism $f:\cp(\Lambda)\times\cu\to\cv$ consists of
a smooth map $f_0:U\to V_{\bar{0}}$ and a collection of smooth maps $\{f_n:U\to\Sym^n(\realR^{0|n}\oplus V'_{\bar{1}},V)\mid n\geq 1\}$.
Symmetric here of course means a symmetric parity-preserving map of \emph{super} vector spaces, so
\[
\Sym^i((\realR^{0|n}\oplus V')_{\bar{1}},V)=\wedge^i(\realR^n\oplus V'_{\bar{1}},V_{\bar{i}})
\]
where the right hand side denotes alternating maps between \emph{ordinary} vector spaces. Setting 
$\Sym^0(\realR^{0|n}\oplus V'_{\bar{1}},V):=V_{\bar{0}}$ we can identify
\begin{multline*}
\scinfty(\cp(\Lambda_n)\times\cu,\olv)=C^\infty(U,\Sym^\bullet(\realR^{0|n}\oplus V'_{\bar{1}},V))=\\C^\infty(U,\oplus_{i=0}\Sym^i(\realR^{0|n}\oplus V'_{\bar{1}},V)).
\end{multline*}
It is
\begin{eqnarray*}
\Sym^\bullet(\realR^n\oplus V'_{\bar{1}},V) &=& \bigoplus_{i}\wedge^i(\realR^n\oplus V'_{\bar{1}},V_{\bar{i}})\\
&=& \bigoplus_i\bigoplus_{j+k=i}\wedge^j\realR^n\otimes%
\wedge^{k}(V'_{\bar{1}},V_{\bar{i}})\\
&=& \left(\bigoplus_{j\,\,\,\mathrm{even}}\bigoplus_{i=j}^{\infty}\wedge^j\realR^n\oplus\wedge^{i-j}(V'_{\bar{1}},V_{\bar{i}})\right)\oplus\\
&&%
\left(\bigoplus_{j\,\,\,\mathrm{odd}}\bigoplus_{i=j}^{\infty}\wedge^j\realR^n\oplus\wedge^{i-j}(V'_{\bar{1}},V_{\bar{i}})\right)\\
&=& \left(\bigoplus_{j\,\,\,\mathrm{even}}\bigoplus_{m=0}^{\infty}\wedge^j\realR^n\oplus\wedge^{m}(V'_{\bar{1}},V_{\bar{m}})\right)\oplus\\%
&&\left(\bigoplus_{j\,\,\,\mathrm{odd}}\bigoplus_{m=0}^{\infty}\wedge^j\realR^n\oplus\wedge^{m}(V'_{\bar{1}},V_{\overline{m+1}})\right),
\end{eqnarray*}
where on the right hand side, all operations are to be understood as those of ordinary vector spaces.
Now we can rewrite that last line as
\[
\Sym^\bullet(\realR^n\oplus V'_{\bar{1}},V)=\Lambda_{n,\bar{0}}\otimes\Sym^\bullet(V'_{\bar{1}},V)\oplus\Lambda_{n,\bar{1}}\otimes\Sym^\bullet(V'_{\bar{1}},\Pi V)
\]
and therefore
\begin{eqnarray*}
C^\infty(U,\Sym^\bullet(\realR^{0|n}\oplus V'_{\bar{1}},V)) &\cong & \Lambda_{n,\bar{0}}\otimes C^\infty(U,\Sym^\bullet(V'_{\bar{1}},V))%
\oplus\\&&\Lambda_{n,\bar{1}}\otimes C^\infty(U,\Sym^\bullet(V'_{\bar{1}},\Pi V))\\
&\cong & \Lambda_{n,\bar{0}}\otimes \scinfty(\cu,\olv)\oplus\\&&%
\Lambda_{n,\bar{1}}\otimes\scinfty(\cu,\overline{\Pi V})\\
&\cong & (\Lambda_{n}\otimes \scinfty(\cu,\olv\oplus \ol{\Pi V}))_{\ol{0}}\\
&\cong & \ol{\scinfty(\cu,V\oplus \Pi V)}(\Lambda_{n}).
\end{eqnarray*}
\end{proof}

This result stays true if we study a general super vector bundle $\ce\to\cm$ over an arbitrary supermanifold $\cm$.
Note first that the open coverings by trivial bundles as defined above endow the category $\catsvbun$ with a
Grothendieck topology. This topology turns out to be \emph{subcanonical} (for a proof in the very similar case of the category $\catsman$ see \cite{Sachse08A-Categorical-Formulation-of-Superalgebra-and-Supergeometry}). This means that
every representable functor $\catsvbun^\circ\to\catsets$ is a sheaf.

As a consequence, if $\{\phi_\alpha:\ce_\alpha\to\ce\}_{\alpha\in A}$ is an open covering of the super vector bundle $\ce$ then $\ce$ is a colimit with the $\phi_\alpha$ as the canonical maps. More precisely, $\ce$ is the limit of
the diagram
\begin{equation}
\label{eq:diag}
\{\ce_\alpha\longleftarrow\ce_{\alpha\beta}=\ce_\alpha\times_\ce\ce_\beta\longrightarrow\ce_\beta\mid \alpha,\beta\in A\}=:F:\Delta\to\catsvbun
\end{equation}
where $\Delta$ is an abstract diagram category and $F$ a functor into $\catsvbun$ whose image is the open covering
by trivial subbundles and their fibered products.

This in turn entails the following

\begin{lemma}
Taking sections and pull-back maps
\[
\phi_\alpha^*:\hat{\Gamma}(\cm,\ce)\to\hat{\Gamma}(\cu_\alpha,\cu_\alpha\times\cv_\alpha)
\]
we produce a diagram $\hat{\Gamma}(F):\Delta^\circ\to\catmod_{\olr}$ of the sets of sections.
It is
\[
\hat{\Gamma}(\cm,\ce)=\lim(\hat{\Gamma}(F))
\]
as $\olr$-modules.
\end{lemma}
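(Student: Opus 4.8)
The plan is to recognise the asserted identity as the \emph{sheaf (descent) condition} for the section functor $\gh(\cm,-)$ with respect to the Grothendieck topology on $\catsvbun$, and then to verify that condition at the level of $\Lambda$-points by gluing morphisms of supermanifolds. The preceding discussion already supplied the colimit presentation $\ce=\operatorname{colim}(F)$; what remains is the dual fact that taking sections turns this colimit into a limit, which is exactly the statement that $\gh$ is a sheaf.

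First I would make the limit explicit. Since $\Delta$ indexes the cover together with the pairwise fibre products $\ce_{\alpha\beta}=\ce_\alpha\times_\ce\ce_\beta$, and $\gh(F)$ reverses the two arrows $\ce_{\alpha\beta}\to\ce_\alpha,\ce_\beta$ into the restriction maps
\[
\gh(\cu_\alpha,\ce_\alpha)\longrightarrow\gh(\cu_{\alpha\beta},\ce_{\alpha\beta}),
\]
the limit $\lim(\gh(F))$ is the submodule of $\prod_\alpha\gh(\cu_\alpha,\ce_\alpha)$ consisting of families $(\sigma_\alpha)$ whose two restrictions to each $\gh(\cu_{\alpha\beta},\ce_{\alpha\beta})$ agree. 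Each $\gh(\cu_\alpha,\ce_\alpha)$ and, since by Definition \ref{def:svbun} the overlaps $\ce_{\alpha\beta}$ are again trivial bundles, each $\gh(\cu_{\alpha\beta},\ce_{\alpha\beta})$ is an $\olr$-module by Lemma \ref{lem:superSectionsAreSuperRepresenatble}. Hence the limit lives in $\catmod_\olr$, and the comparison map $\Phi\colon\gh(\cm,\ce)\to\lim(\gh(F))$, $\sigma\mapsto(\phi_\alpha^*\sigma)_\alpha$, is $\olr$-linear and natural in $\Lambda$. It then suffices to show $\Phi$ is a bijection on each $\Lambda$-point.

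Fix $\Lambda\in\catgr$. By definition $\gh(\cm,\ce)(\Lambda)=\Gamma(\cp(\Lambda)\times\cm,\pi_\cm^*\ce)$, and the cover $\{\phi_\alpha\}$ pulls back along $\pi_\cm$ to a cover of $\pi_\cm^*\ce$ whose bases $\cp(\Lambda)\times\cu_\alpha$ cover $\cp(\Lambda)\times\cm$. A family in $\lim(\gh(F))(\Lambda)$ is then a compatible collection of morphisms $\sigma_\alpha\colon\cp(\Lambda)\times\cu_\alpha\to\ce_\alpha$ which, viewed via $\phi_\alpha$ as morphisms into the supermanifold $\ce$, agree on the overlaps. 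Because the Grothendieck topology on $\catsman$ is subcanonical, morphisms into $\ce$ satisfy descent, so the $\sigma_\alpha$ glue to a unique $\sigma\colon\cp(\Lambda)\times\cm\to\ce$ with $\phi_\alpha^*\sigma=\sigma_\alpha$; this gives surjectivity of $\Phi_\Lambda$, while injectivity is the uniqueness clause of the same descent statement. The section condition $p\circ\sigma=\id$ follows from uniqueness, as $p\circ\sigma$ and $\id$ are two morphisms with the same restrictions to the cover. Naturality in $\Lambda$ holds because both $\gh(\cm,\ce)(\varphi)$ and the restriction maps are precomposition with $\cp(\varphi)\times\id$, so gluing commutes with $\varphi$.

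The main obstacle is the gluing step itself: one must ensure that a compatible family of \emph{local supersmooth sections} assembles into a \emph{global supersmooth} morphism in $\catsman$ satisfying the section condition, not merely into set-theoretic data at each $\Lambda$-point. This is precisely what subcanonicity of the topology on $\catsman$ delivers, and it is why the preceding paragraphs were careful to record both that $\ce=\operatorname{colim}(F)$ and that descent is available. The remaining verifications — $\olr$-linearity, naturality in $\Lambda$, and the separatedness half of the bijection — are routine once descent is in hand.
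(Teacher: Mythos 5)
Your proof is correct, but it justifies the crucial gluing step differently from the paper. The paper's proof is hands-on: it writes each chart as a pair $(f_\alpha,g_\alpha)$ with $g_\alpha$ an invertible $\cu_\alpha$-family, defines $\phi_\alpha^*\sigma=g_\alpha^{-1}\circ\sigma\circ(\id_{\cp(\Lambda)}\times f_\alpha)$ explicitly, and then verifies that a compatible family $(\sigma_\alpha)$ patches together by checking \emph{pointwise}, i.e.\ at each inner level $\Lambda'$, where the components $\sigma_{\alpha\Lambda'}$ are ordinary smooth maps between ordinary manifolds that coincide on overlaps and hence glue by classical differential geometry. You instead invoke subcanonicity of the Grothendieck topology on $\catsman$ as a black box: the representable functor $\Hom_{\catsman}(-,\ce)$ is a sheaf, the family $\{\cp(\Lambda)\times\cu_\alpha\to\cp(\Lambda)\times\cm\}$ is a cover (being the base-change of one), so compatible local morphisms into the total space descend uniquely. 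This is the same result the paper cites (via Sachse) one paragraph earlier, but the paper uses it only for the colimit presentation of $\ce$, not inside the lemma's proof. Your route buys something real: descent in $\catsman$ hands you a genuine supersmooth morphism together with uniqueness, which is exactly the point the paper's ``one immediately checks pointwise'' glosses over (component-wise gluing gives set-level data at each $\Lambda'$, and the supersmoothness and naturality of the result are left implicit). You are also more explicit than the paper about the equalizer description of $\lim(\hat{\Gamma}(F))$, the $\olr$-linearity of the comparison map, the preservation of the section condition $p\circ\sigma=\id$, and naturality in $\Lambda$. What the paper's approach buys in exchange is self-containedness at the critical step — it exhibits concretely why the gluing works by reducing to ordinary manifolds — whereas your argument is only as strong as the cited subcanonicity theorem, whose proof (in the reference) proceeds by essentially the same pointwise reduction.
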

\begin{proof}
In detail, each chart $\phi_\alpha:\ce_\alpha\to\ce$ consists of a pair $(f_\alpha,g_\alpha)$ which makes the
diagram
\[
\xymatrix{\ce_\alpha=\cu_\alpha\times\cv_\alpha \ar[rr]^{g_\alpha} \ar[d] && \ce \ar[d]\\
\cu_\alpha \ar[rr]_{f_\alpha} && \cm}
\]
commute. Here, $g_\alpha$ is a $\cu_\alpha$-family of isomorphisms, so we also have an inverse $g^{-1}_\alpha$.

So given a section $\sigma:\cp(\Lambda)\times\cm\to\ce$ we define the pulled-back section as
\[
\phi_\alpha^*\sigma=g^{-1}_\alpha\circ\sigma\circ(\id_{\cp(\Lambda)}\times f_\alpha)\,\,:\,\,\cp(\Lambda)\times\cu_\alpha\to\ce_\alpha.
\]

Pick some $\Lambda\in\catgr$ and assume we are given local sections $\sigma_\alpha:\cu_\alpha\times\cp(\Lambda)\to\ce_\alpha$ which coincide on the overlaps, i.e.,
\[
\pi_\alpha^*\sigma_\alpha=\pi_\beta^*\sigma_\beta
\] 
where $\pi_\alpha,\pi_\beta$ are the canonical maps of $\ce_\alpha\times_\ce\ce_\beta$.
These local sections define a unique global section $\sigma:\cm\times\cp(\Lambda)\to\ce$ as one immediately checks
pointwise, i.e., by looking at the
\[
\sigma_{\alpha\Lambda'}:\cp(\Lambda)(\Lambda')\times\cu_\alpha(\Lambda')\to\ce_\alpha(\Lambda').
\]
All of these are ordinary (smooth) maps between ordinary spaces which coincide on overlaps.

So for each $\Lambda$, the $\Lambda$-points of local sections of a super vector bundle $\ce$ form a sheaf on
the supermanifold $\cm$. The resulting uniqueness of the patched together global section makes the global sections a limit of the local sections. 
\end{proof}

One can even go one step further and conclude that the \emph{functors} $\hat{\Gamma}(\cu_\alpha,\ce_\alpha)$ form
a sheaf with values in $\olr$-modules on $\cm$.

Since we assume that the $\ce_\alpha=\cu_\alpha\times\cv_\alpha$ are trivial we know from
Lemma \ref{lem:superSectionsAreSuperRepresenatble} that
\[
\hat{\Gamma}(\cu_\alpha,\ce_\alpha)\cong\overline{\scinfty(\cu_\alpha,\cv_\alpha\oplus\Pi\cv_\alpha)}.
\]
Therefore, $\hat{\Gamma}(F):\Delta^\circ\to\catmod_{\olr}$ is a diagram of superrepresentable $\olr$-modules.
If we abbreviate this diagram by an abuse of notation as just $\hat{\Gamma}(\ce_\alpha)$ for a moment then
we note that
\[
\lim(\hat{\Gamma}(\ce_\alpha))\cong\lim(\overline{\scinfty(\cu_\alpha,\cv_\alpha\oplus\Pi\cv_\alpha)})\cong
\overline{\lim(\scinfty(\cu_\alpha,\cv_\alpha\oplus\Pi\cv_\alpha)}
\]
where the last $\cong$ follows from the fact that the functor $\ol{\cdot}$ consists in tensoring with the
finite-dimensional Grassmann algebras and taking the even parts which commutes with limits and colimits.
Thus we have shown

\begin{thm}
Let $\ce\to\cm$ be a real super vector bundle. Then the functor $\hat{\Gamma}(\cm,\ce)$ of global (smooth) sections
is a superrepresentable $\olr$-module.
\end{thm}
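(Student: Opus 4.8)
The plan is to assemble the two preceding lemmas together with the exactness properties of the embedding functor $\ol{\cdot}$. First I would fix a trivializing atlas $\{\phi_\alpha:\ce_\alpha\to\ce\}_{\alpha\in A}$, so that each $\ce_\alpha=\cu_\alpha\times\cv_\alpha$ is a trivial super vector bundle over a superdomain $\cu_\alpha$. Lemma~\ref{lem:superSectionsAreSuperRepresenatble} then identifies each functor of local sections as
\[
\gh(\cu_\alpha,\ce_\alpha)\cong\ol{\scinfty(\cu_\alpha,\cv_\alpha\oplus\Pi\cv_\alpha)},
\]
so that the whole diagram $\gh(F):\Delta^\circ\to\catmod_{\olr}$ takes values in superrepresentable $\olr$-modules.

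Next I would invoke the gluing lemma, which expresses the functor of global sections as the limit $\gh(\cm,\ce)=\lim\big(\gh(F)\big)$ in $\catmod_{\olr}$; this is where the subcanonicity of the Grothendieck topology on $\catsvbun$ and the objectwise sheaf property of sections enter. The essential remaining point is that $\ol{\cdot}$ commutes with this limit. Since limits in $\catmod_{\olr}\subset\catsets^\catgr$ are computed objectwise, it suffices to observe that at each $\Lambda_n$ the functor $\ol{\cdot}$ is given by $V\mapsto(\Lambda_n\otimes V)_{\bar{0}}$, i.e.\ by tensoring with a fixed finite-dimensional vector space followed by extracting the even part. Tensoring with a finite-dimensional space is (after choice of basis) a finite product and hence preserves arbitrary limits, while extracting the even part is projection onto a direct factor of a $\Z_2$-grading and is therefore computed gradewise; both operations thus intertwine limits.

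Combining these facts I would conclude
\[
\gh(\cm,\ce)=\lim\big(\ol{\scinfty(\cu_\alpha,\cv_\alpha\oplus\Pi\cv_\alpha)}\big)\cong\ol{\lim\big(\scinfty(\cu_\alpha,\cv_\alpha\oplus\Pi\cv_\alpha)\big)},
\]
which exhibits $\gh(\cm,\ce)$ as lying in the essential image of $\ol{\cdot}$, hence as a superrepresentable $\olr$-module. The main obstacle is not in this final assembly but is already absorbed into the gluing lemma: one must know that the objectwise sheaf condition patches the $\Lambda$-point data of local sections into an honest morphism of supermanifolds, i.e.\ that the pointwise-glued section is again supersmooth. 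Granting that lemma, the only genuinely new ingredient here is the interchange of $\lim$ and $\ol{\cdot}$, which rests solely on the finite-dimensionality of the Grassmann algebras $\Lambda_n$.
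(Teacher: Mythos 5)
Your proposal is correct and follows essentially the same route as the paper: superrepresentability of local sections via Lemma~\ref{lem:superSectionsAreSuperRepresenatble}, the gluing lemma expressing $\hat{\Gamma}(\cm,\ce)$ as the limit $\lim(\hat{\Gamma}(F))$ over the trivializing cover, and the interchange of $\ol{\cdot}$ with this limit on the grounds that $\ol{\cdot}$ only involves tensoring with the finite-dimensional Grassmann algebras and extracting even parts. Your justification of that last interchange (finite tensor product as a finite product, even part as projection onto a direct factor, limits computed objectwise) is in fact a slightly more detailed version of the paper's one-line remark.
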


From this theorem we can conclude in particular \cite{M:private}
\[
\overline{\lim(\scinfty(\cu_\alpha,\cv_\alpha\oplus\Pi\cv_\alpha)}\cong\ol{\Gamma(\cu_\alpha,\cv_\alpha\oplus\Pi\cv_\alpha)}\cong\ol{\Gamma(\cm,\ce\oplus\Pi\ce)}.
\]
Here, the unhatted $\Gamma$ just means ordinary sections, i.e., maps $\sigma:\cm\to\ce$ such that $p\circ\sigma=\id_\cm$.

It might seem strange at first that the functor of global sections is represented by the super vector space of sections of
$\ce\oplus\Pi\ce$. But the set of maps $\cm\to\ce$ only carries the structure of a vector space, not that of
a \emph{super} vector space. As is basically always the case, the set of maps between two super objects is itself
not super but can be enriched to become so. That is essentially due to the fact that the maps between super objects preserve parity.

As an example, the set of sections of the tangent bundle only consists of the even vector fields. To see the odd
ones as well we have add a parity changed copy of the tangent bundle. This is a large-scale version of the
simple fact that, for super vector spaces $V,W$, the inner Hom-object
\[
\ihom(V,W)\cong\Hom(V,W)\oplus\Hom(V,\Pi W)\cong\Hom(V,W\oplus\Pi W).
\]
This inner Hom object is the object which is usually of interest; the actual morphisms $V\to W$ only consitute its
even part.

\section{Supersmooth morphisms and their composition}

Following the general principles presented in section 2 the diffeomorphism supergroup $\sdiff(\cm)$ has to be
a subfunctor of the inner Hom-object $\ihom(\cm,\cm)$. The latter is defined as a functor $\catgr\to\catsets$ by setting
\[
\ihom(\cm,\cm')(\Lambda):=\Hom(\cp(\Lambda)\times\cm,\cm').
\]
and by the assignment of
\begin{eqnarray}
\label{eqn:scinftymap}
\ihom(\cm,\cm')(\varphi):\ihom(\cm,\cm')(\Lambda) &\to& \ihom(\cm,\cm')(\Lambda')\\
\nonumber
\sigma &\mapsto& \sigma\circ(\cp(\varphi)\times\id_\cm)
\end{eqnarray}
to each $\varphi:\Lambda\to\Lambda'$. We shall call the elements
of $\ihom(\cm,\cm')(\Lambda)$ supersmooth morphisms.
Note the similarity of this definition with that of the functor $\hat{\Gamma}(\cm,\ce)$ of sections of a super
vector bundle given in the last section: the higher points of the inner Hom object are morphisms of families over superpoints. For more motivation, see \cite{Sachse08A-Categorical-Formulation-of-Superalgebra-and-Supergeometry} and \cite{S:thesis}.

\subsection{Composition of morphisms and the unit element}

Let $\cm,\cm',\cm''$ be supermanifolds and fix $\Lambda\in\catgr$
for the moment. For two supersmooth maps
$f\in \Hom(\cp(\Lambda)\times\cm,\cm')$ and
$g\in\Hom(\cp(\Lambda)\times\cm',\cm'')$, the composition $g
\circ(\id_{\cp_{\Lambda}}\times
f)$ is in $\Hom(\cp(\Lambda)\times\cm,\cm'')$. This defines a map
\begin{multline*}
\icomp_{\Lambda}\from 
\ihom(\cm,\cm')(\Lambda)\times \ihom(\cm',\cm'')(\Lambda)\to
\ihom(\cm,\cm'')(\Lambda),\\
(f,g)\mapsto g \circ(	\id_{\cp_{\Lambda}}\times f).
\end{multline*}
If $\Lambda$ varies over all objects of $\catgr$, then this in fact
defines a natural transformation
$\icomp\from \ihom(\cm,\cm')\times\ihom(\cm',\cm'')\Rightarrow 
\ihom(\cm,\cm'')$.

\begin{lemma}
The functor
\begin{eqnarray*}
e_{\cm}\from \catgr\to\catsets,\quad
\Lambda \mapsto \{\Pi_{\cm}\from \cp(\Lambda)\times\cm\to\cm\}
\end{eqnarray*}
is a subfunctor of $\ihom(\cm,\cm)$, which defines the unit 
in $\catsets^\catgr$ for the composition $\icomp$. Moreover,
$\icomp$ is associative, giving $\ihom(\cm,\cm)$ the structure of
a semi-group in $\catsets^\catgr$.
\end{lemma}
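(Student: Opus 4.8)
The plan is to deduce all three assertions from the cartesian structure of $\catsman$, so that they reduce to formal identities among products and projections. The one point to fix first is the meaning of the symbol $\id_{\cp_\Lambda}\times f$ occurring in $\icomp$. For $f\from\cp(\Lambda)\times\cm\to\cm'$ the available domain is $\cp(\Lambda)\times\cm$, so the only reading under which $g\circ(\id_{\cp_\Lambda}\times f)$ lands in $\Hom(\cp(\Lambda)\times\cm,\cm'')$ is
\[
\id_{\cp_\Lambda}\times f:=(\pr_1,f)\from\cp(\Lambda)\times\cm\to\cp(\Lambda)\times\cm',
\]
with $\pr_1$ the projection onto $\cp(\Lambda)$ and $f$ the second component; equivalently $(\id_{\cp(\Lambda)}\times f)\circ(\Delta_{\cp(\Lambda)}\times\id_\cm)$. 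It is precisely this hidden diagonal that will make $\icomp$ associative.

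First I would dispatch the subfunctor claim. Each $\Pi_\cm\from\cp(\Lambda)\times\cm\to\cm$ is a supersmooth morphism, so $e_\cm(\Lambda)\se\ihom(\cm,\cm)(\Lambda)$, and $e_\cm$ is the singleton subfunctor selecting $\Pi_\cm$ at every $\Lambda$. For $\varphi\from\Lambda\to\Lambda'$ one has $\ihom(\cm,\cm)(\varphi)(\Pi_\cm)=\Pi_\cm\circ(\cp(\varphi)\times\id_\cm)$, and projecting onto the $\cm$-factor after $\cp(\varphi)\times\id_\cm$ is again the projection; hence this is the unique element of $e_\cm(\Lambda')$, so the functorial maps restrict to $e_\cm$.

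Next I would verify the unit laws on $\ihom(\cm,\cm)$. Under the reading above, $\id_{\cp_\Lambda}\times\Pi_\cm=(\pr_1,\Pi_\cm)=(\pr_1,\pr_2)=\id_{\cp(\Lambda)\times\cm}$, so $\icomp_\Lambda(\Pi_\cm,g)=g$ for every $g$; and $\Pi_\cm\circ(\id_{\cp_\Lambda}\times f)=\Pi_\cm\circ(\pr_1,f)=f$, so $\icomp_\Lambda(f,\Pi_\cm)=f$. Thus $\Pi_\cm$ is a two-sided identity at each $\Lambda$, and since these equalities are compatible with the maps $\ihom(\cm,\cm)(\varphi)$ they assemble into the commuting unit diagrams of natural transformations in $\catsets^\catgr$.

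Finally associativity, the only step with content. Everything reduces to the identity
\[
(\id_{\cp_\Lambda}\times g)\circ(\id_{\cp_\Lambda}\times f)=\id_{\cp_\Lambda}\times\bigl(g\circ(\id_{\cp_\Lambda}\times f)\bigr),
\]
which I would prove by comparing components: both sides have first component $\pr_1$ and second component $g\circ(\pr_1,f)$, so they coincide by the universal property of the product. Feeding this in, $\icomp_\Lambda(\icomp_\Lambda(f,g),h)$ and $\icomp_\Lambda(f,\icomp_\Lambda(g,h))$ both collapse to $h\circ(\id_{\cp_\Lambda}\times g)\circ(\id_{\cp_\Lambda}\times f)$. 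The main obstacle is thus purely bookkeeping: one must remember that $\id\times f$ conceals the diagonal of $\cp(\Lambda)$, after which the claim is just associativity of composition together with the product's universal property. Should one wish to bypass the internal-product argument, every identity may instead be checked on $\Lambda'$-points, where all maps become ordinary smooth maps and the verifications are immediate; as equality of morphisms in $\catsman$ is detected componentwise, this already suffices.
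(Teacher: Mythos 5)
Your proposal is correct and takes essentially the same route as the paper: the paper's entire proof is ``This is clear from the definition,'' and your argument is exactly that definitional unwinding, carried out carefully. Your one genuine addition is making explicit that $\id_{\cp_\Lambda}\times f$ must be read as $(\pr_1,f)$ (i.e., it conceals the diagonal of $\cp(\Lambda)$), which is the correct interpretation and the point on which the unit and associativity identities hinge.
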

\begin{proof}
This is clear from the definition.
\end{proof}

From the above it is obvious what the diffeomorphism supergroup of
a supermanifold should be. It should be comprised by subfunctors
of $\ihom(\cm,\cm)$ which are invertible with respect to $\icomp$.
Like the composition and all other operations invertibility has to be a ``point-wise'' notion.

\begin{dfn}
 For $f\in\Hom(\cp(\Lambda)\times\cm,\cm)$, an
 inverse is defined to be a morphism
 $f^{-1}\in\Hom(\cp(\Lambda)\times\cm,\cm)$  such that
 \begin{equation*}
  (\id_{\cp(\Lambda)}\times f)\circ f^{-1}=(\id_{\cp(\Lambda)}\times f^{-1})\circ f=\Pi_\cm.
 \end{equation*}
\end{dfn}
An inverse need not exist, but if it exists it is unique. If it exists,
we call $f$ invertible.

\subsection{Explicit description of $\boldsymbol{\ihom(\cm,\cm)}$}

Before turning to the diffeomorphism supergropup we derive some explicit parametrization results on the spaces $\ihom(\cm,\cm)(\Lambda)$.
We have 
\[
\Hom_{\catsman}(\cp(\Lambda)\times\cm,\cm)\cong\Hom_{\catsalg}(C^\infty(\cm),C^\infty(\cm)\otimes\Lambda).
\]
Therefore, any morphism $\phi:\cp(\Lambda)\times\cm\to\cm$ is given by an algebra homomorphism (which we also denote
$\phi$) of the form
\[
\phi(f)=\alpha_0(f)+\sum_{i}\tau_i\alpha_i(f)+\sum_{i<j}\tau_i\tau_j\alpha_{ij}(f)+\ldots,
\]
where the sums run over the odd generators $\tau_1,\ldots,\tau_n$ of $\Lambda$ and each $\alpha_I$ is a
linear map $C^\infty(\cm)\to C^\infty(\cm)$ of parity the length $|I|$ of its index.

The image of $\phi$ under $\ihom(\cm,\cm)(\epsilon_\Lambda)$ is the morphism $C^\infty(\cm)\rightarrow C^\infty(\cm)$
given by $\alpha_0$ because $\epsilon_\Lambda$ is the map which mods out all nilpotent elements from $\Lambda$.

Before we prove the general statement, let us investigate the case $\Lambda=\Lambda_1=\realR[\tau]$ in detail to
gain some intuition. That $\phi$ is a homomorphism means that
\begin{multline*}
\phi(fg)=\phi(f)\phi(g) = (\alpha_0(f)+\tau\alpha_1(f))(\alpha_0(g)+\tau\alpha_1(g))\\
= \alpha_0(f)\alpha_0(g)+\tau\left[\alpha_1(f)\alpha_0(g)+(-1)^{p(f)}\alpha_0(f)\alpha_1(g)\right].
\end{multline*}
This means that $\alpha_0$ is itself a homomorphism of superalgebras. We also see that $\alpha_1$ is a derivation over $\alpha_0$. That means the following. We can view the homomorphism $\alpha_0$ as endowing $C^\infty(\cm)$ with
an additional module structure over itself. Let us for clarity denote this
module structure as $C^\infty(\cm)^{\alpha_{0}}$. Then $\alpha_1$ is a derivation from $C^\infty(\cm)$ to
$C^\infty(\cm)^{\alpha_{0}}$.

It follows from the existence of universal derivations \cite{L:Algebra} that one may then write $\phi$ as
\[
\phi=(1+\tau X)\circ\alpha_0
\]
where $X$ is an odd vector field on $\cm$. The precise statement about universal derivations is that
\[
\mathrm{der}_R(A,M)\cong\Hom_A(\Omega,M)
\]
where $R$ is a commutative ring, $A$ is a commutative $R$-algebra and $M, \Omega$ are $A$-modules. One checks that this continues to hold for supercommutative rings and their modules. $\Omega$ is universal in the sense that every derivation $D:A\to M$
factors uniquely as $D=f\circ d$ where $d\from A\to \Omega$ is a derivation depending only on $A$ and $f\from \Omega\to M$ is $A$-linear. In our case $d$ is the de Rham differential, $\Omega$ are the 1-forms, $A$ is $C^\infty(\cm)$ and $M$ is $C^\infty(\cm)^{\alpha_{0}}$.
Now since $\Omega$ is in our case the dual space to the vector fields $\cx(\cm)$ we find that
\[
\Hom_A(\Omega,M)\cong\cx(\cm)\otimes_{C^\infty(\cm)}C^\infty(\cm)^{\alpha_{0}}.
\]
So derivations $D:C^\infty(\cm)\to C^\infty(\cm)^{\alpha_{0}}$ are still vector fields but with a different module
structure over the functions.

One checks that in the case $\Lambda=\Lambda_2$, $\phi$ takes on the form
\begin{eqnarray*}
\phi &=& \exp(\tau_1 X_1+\tau_2 X_2+\tau_1\tau_2 X_{12})\circ\alpha_0\\
&=& (1+\tau_1X_1+\tau_2X_2+\frac{1}{2}\tau_1\tau_2X_{12})\circ\alpha_0.
\end{eqnarray*}
The general picture will be very similar, with each $\alpha_I$ contributing an additional vector field of parity
$|I|$. 

So apart from $\alpha_0$, which describes a morphism of $\cm$ into itself, the higher terms depending on nilpotent
parameters of the base $\cp(\Lambda)$ act ``infinitesimally'', that is, by derivations. 
This is a ramification of the fact that odd dimensions behave infinitesimally,
familiar for example from the Taylor expansion of superfunctions into powers of their nilpotent part which is formally equivalent to extending a function onto an (odd) infintesimal neighbourhood.

Note that if $\alpha_0$ is invertible, as will be the case for diffeomorphisms, the induced map $d\alpha_0$ on
vector fields is an isomorphism and
\begin{equation}
\label{differential}
X\circ\alpha_0=\alpha_0\circ d\alpha_0(X)
\end{equation}
for every vector field $X$. So in this case we may choose whether we pre- or postcompose with $\alpha_0$.

For the proof of the general case, let us introduce the following notation. By $\mathfrak{S}(a_1\cdots a_n)$ we
denote the symmetrization of the product $a_1\cdots a_n$, i.e.,
\[
\mathfrak{S}(a_1\cdots a_n)=\frac{1}{n!}\sum_{\sigma\in P(n)} a_{\sigma(1)}\cdots a_{\sigma(n)},
\]
where $P(n)$ is the group of permutations of $n$ elements. The expression $I=I_1+\ldots+I_j$ will denote the decomposition of 
the ordered set $I$ into an ordered $j$-tuple of subsets $I_1,\ldots,I_j$, each carrying the 
ordering induced from $I$. For example, $\{1,2\}=I_1+I_2$ consists of the four partitions
\[
\{\{\},\{1,2\}\},\quad \{\{1\},\{2\}\},\quad \{\{2\},\{1\}\},\quad \{\{1,2\},\{\}\}.
\]
The notation $I=I_1\cup\ldots\cup I_j$, on the other hand, denotes the decomposition of the ordered set 
$I$ into an
\emph{unordered} $j$-tuple of disjoint ordered subsets. So, $\{1,2\}=I_1\cup I_2$ consists of two partitions:
\[
\{\{\},\{1,2\}\},\quad \{\{1\},\{2\}\}.
\]

The following lemma will be useful.

\begin{lemma}
\label{sym}
Let $A$ be an algebra, $f,g\in A$, and let $a_1,\ldots,a_n$ be derivations of $A$. Then
\begin{equation*}
\mathfrak{S}(a_1\circ\ldots\circ a_n)(fg)=\sum_{\{1,\ldots,n\}=K+L}
\mathfrak{S}(a_K)(f)\mathfrak{S}(a_L)(g),
\end{equation*}
where for $K=\{k_1,\ldots,k_j\}$, $a_K$ denotes the composition
\[
a_K=a_{k_1}\circ\ldots\circ a_{k_j}.
\]
\end{lemma}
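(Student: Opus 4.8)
The plan is to prove the identity
\[
\mathfrak{S}(a_1\circ\ldots\circ a_n)(fg)=\sum_{\{1,\ldots,n\}=K+L}\mathfrak{S}(a_K)(f)\,\mathfrak{S}(a_L)(g)
\]
by induction on $n$, using the single Leibniz rule for a derivation as the base mechanism and tracking carefully how the symmetrization $\mathfrak{S}$ interacts with the splitting of the index set into ordered pairs of subsets. First I would verify the case $n=1$, where $\mathfrak{S}(a_1)=a_1$ is just a derivation, so the left side is $a_1(fg)=a_1(f)g+f\,a_1(g)$; on the right side the ordered decompositions $\{1\}=K+L$ are exactly $(\{1\},\{\})$ and $(\{\},\{1\})$, with the convention $\mathfrak{S}(a_\emptyset)=\id$, which reproduces the two Leibniz terms. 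This fixes the empty-set convention that makes the statement come out right.

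For the inductive step, I would write
\[
\mathfrak{S}(a_1\circ\ldots\circ a_n)=\frac{1}{n!}\sum_{\sigma\in P(n)}a_{\sigma(1)}\circ\big(a_{\sigma(2)}\circ\ldots\circ a_{\sigma(n)}\big),
\]
peel off the outermost derivation $a_{\sigma(1)}$, and apply the ordinary Leibniz rule after the inner composition has acted on $fg$. The key observation is that $a_{\sigma(2)}\circ\ldots\circ a_{\sigma(n)}$ is an \emph{ordered} composition over the remaining $n-1$ indices, and averaging over all $\sigma$ that fix a given value $\sigma(1)=i$ produces $\mathfrak{S}$ of the remaining $n-1$ derivations. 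Applying the induction hypothesis to this inner symmetrization expands $(fg)$ into a sum over decompositions $\{1,\ldots,n\}\setminus\{i\}=K'+L'$, and then distributing the outer $a_i$ via Leibniz attaches $a_i$ either to the $f$-block $K'$ or to the $g$-block $L'$. The main bookkeeping step is then to check that, as $i$ ranges over all indices and $(K',L')$ over all ordered decompositions of the complement, the resulting terms are in bijection with the ordered decompositions $\{1,\ldots,n\}=K+L$, each arising with exactly the right multiplicity so that the $\tfrac{1}{n!}$ and the internal $\tfrac{1}{(n-1)!}$ combine to reproduce the symmetrizations $\mathfrak{S}(a_K)$ and $\mathfrak{S}(a_L)$ on the right.

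The hard part will be this combinatorial reindexing: verifying that inserting the index $i$ into either the $K'$-block or the $L'$-block, summed over all positions in the ordering and all choices of $i$, yields precisely $\mathfrak{S}(a_K)(f)\,\mathfrak{S}(a_L)(g)$ with the correct normalization and no overcounting. I expect this to reduce to the identity that the number of ways to write an ordered $(|K|+|L|)$-set as a concatenation of an ordered $|K|$-set and an ordered $|L|$-set, weighted appropriately, matches the factorials appearing in the two symmetrizations; here one must be mindful that $K+L$ denotes an \emph{ordered} pair of subsets each inheriting the order from $\{1,\ldots,n\}$, so no sign or parity subtlety enters (the derivations $a_i$ are simply being composed, not braided), and the commutative-algebra Leibniz rule suffices. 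Once the multiplicities are matched, the identity follows.
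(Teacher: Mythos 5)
Your plan is correct, but it proves the lemma by a genuinely different route than the paper does. The paper's proof is a direct, one-shot multiplicity count: iterating the Leibniz rule over the fully symmetrized composition, every resulting term has the form $a_K(f)\,a_L(g)$ for an ordered pair of disjoint ordered sequences, and the paper counts how often a fixed pair $(K,L)$ occurs --- namely $(|K|+|L|)!/(|K|!\,|L|!)$ times, the number of ways to interleave the ordered sequence $L$ into the ordered sequence $K$ --- then observes that this multiplicity divided by $n!$ is exactly $1/(|K|!\,|L|!)$, the normalization carried by $\mathfrak{S}(a_K)\,\mathfrak{S}(a_L)$. Your induction replaces this shuffle count by the elementary peel-off identity
\[
\mathfrak{S}(a_K)=\frac{1}{|K|}\sum_{i\in K}a_i\circ\mathfrak{S}(a_{K\setminus\{i\}}),
\]
used once to start the inductive step (as you state) and once more to close it: after applying the induction hypothesis and Leibniz, the contributions to a fixed decomposition $\{1,\ldots,n\}=K+L$ with both parts nonempty are $\frac{1}{n}\sum_{i\in K}\bigl(a_i\circ\mathfrak{S}(a_{K\setminus\{i\}})\bigr)(f)\,\mathfrak{S}(a_L)(g)$ plus the symmetric term with the roles of $K$ and $L$ exchanged, which by the identity equals $\bigl(\tfrac{|K|}{n}+\tfrac{|L|}{n}\bigr)\mathfrak{S}(a_K)(f)\,\mathfrak{S}(a_L)(g)$, i.e.\ the desired term since $|K|+|L|=n$; the extreme decompositions $(K,\emptyset)$ and $(\emptyset,L)$ receive contributions of one kind only, with weight $n/n=1$. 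So the regrouping is a weighted many-to-one collection of terms rather than the bijection (or concatenation count) you describe --- that is the one place where your wording should be replaced by the displayed identity, after which your argument is complete. As for what each approach buys: the paper's argument is shorter and yields the explicit multiplicity $N(K,L)=\binom{n}{|K|}$, but its insertion count is stated rather informally; your induction is more mechanical and self-contained, localizes all combinatorics in one elementary identity, and forces the convention $\mathfrak{S}(a_\emptyset)=\mathrm{id}$ to be fixed explicitly at the base case.
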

\begin{proof}
By the Leibniz rule, it is clear that 
$\mathfrak{S}(a_1\circ\ldots\circ a_n)(fg)$ will take the form
\[
\mathfrak{S}(a_1\circ\ldots\circ a_n)(fg)=\sum_{\{1,\ldots,n\}=K+L}\frac{N(K,L)}{n!}a_K(f)a_L(g),
\]
with some integer $N(K,L)$ denoting the multiplicity the $K,L$-summand.
Since the symmetrized product on the left hand side contains all possible orderings of the operators
$a_i$, all possible partitions of $\{1,\ldots,n\}$ into two ordered subsets will really 
appear on the right hand side. The summand with given $K$ and $L$ occurs exactly $(|K|+|L|)!/(|K|!|L|!)$
times, as one checks as follows: starting from an ordered sequence $K$ of indices, there are
$(|K|+|L|)!/|K|!$ ways to insert $|L|$ elements at arbitrary positions into it.
But since the ordering of $L$ is also fixed, one has to divide by the number 
of permutations of $L$. So we have
\begin{eqnarray*}
\mathfrak{S}(a_1\circ\ldots\circ a_n)(fg) &=& \sum_{K,L\subseteq\{1,\ldots,n\}}%
\frac{(|K|+|L|)!}{|K|!|L|!n!}a_K(f)a_L(g)\\
&=& \sum_{\{1,\ldots,n\}=K+L}\mathfrak{S}(a_K)(f)\mathfrak{S}(a_L)(g)
\end{eqnarray*}
\end{proof}

\begin{thm}
\label{sdiff}
Let $\phi:\cp(\Lambda_n)\times\cm\to\cm$ be a $\Lambda_n$-point of $\ihom(\cm,\cm)$. Then
$\phi$ is uniquely determined by its underlying morphism $\phi_0:\cm\rightarrow\cm$, as well as 
$2^{n-1}$ odd
and $2^{n-1}-1$ even vector fields $X_I$ on $\cm$ such that
\begin{equation}
\label{toprove}
\phi=\exp(\sum_{I\subseteq\{1,\ldots,n\}}\tau_IX_I)\circ\phi_0,
\end{equation}
where the sum runs over all increasingly ordered nonempty subsets and $\tau_I$ is the product of
the corresponding $\tau_i$'s.
\end{thm}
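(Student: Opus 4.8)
The plan is to prove the factorization \eqref{toprove} by induction on the nilpotency order, extracting the vector fields $X_I$ one cardinality of $I$ at a time. First I would use the identification $\Hom_\catsman(\cp(\Lambda_n)\times\cm,\cm)\cong\Hom_\catsalg(C^\infty(\cm),C^\infty(\cm)\otimes\Lambda_n)$ to regard $\phi$ as an even algebra homomorphism and expand it as $\phi=\sum_{I\subseteq\{1,\ldots,n\}}\tau_I\alpha_I$, where $\alpha_\emptyset=\phi_0$ and each $\alpha_I$ is a linear map of parity $|I|$. Comparing coefficients of $\tau_I$ in $\phi(fg)=\phi(f)\phi(g)$ yields, for every $I$, a Leibniz-type relation
\[
\alpha_I(fg)=\sum_{I=K+L}\epsilon(K,L)\,\alpha_K(f)\,\alpha_L(g),
\]
where $\epsilon(K,L)$ is the Koszul sign produced by commuting the odd generators $\tau_i$ past the (possibly odd) images $\alpha_K(f)$. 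Isolating the two terms with $K=\emptyset$ or $L=\emptyset$ exhibits the \emph{diagonal} part of this relation as the statement that $\alpha_I$ is a derivation over $\phi_0$, up to the cross terms in which both $K,L\neq\emptyset$.

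Next I would compute the $\tau_I$-coefficient on the right-hand side of \eqref{toprove}. Expanding $\exp(\sum_J\tau_JX_J)=\sum_k\tfrac1{k!}(\sum_J\tau_JX_J)^k$ and collecting, the $\tau_I$-coefficient is a sum over ordered partitions $I=I_1+\cdots+I_k$ of signed products $X_{I_1}\circ\cdots\circ X_{I_k}$, post-composed with $\phi_0$. Here Lemma \ref{sym} is the key tool: it rewrites these symmetrized partition sums in terms of the very partition structure that appears in the homomorphism relation above, so that the $\tau_I$-coefficient of $\exp(\sum_J\tau_JX_J)\circ\phi_0$ equals $X_I\circ\phi_0$ plus a sum of products $\mathfrak{S}(X_K)(\,\cdot\,)\,\mathfrak{S}(X_L)(\,\cdot\,)$ built only from the $X_J$ with $|J|<|I|$.

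The induction then runs on $|I|$. The base cases $|I|\le 1$ are exactly the $\Lambda_1$-analysis already carried out: $\alpha_\emptyset=\phi_0$ is a homomorphism and each $\alpha_{\{i\}}$ is a derivation over $\phi_0$, hence equals $X_{\{i\}}\circ\phi_0$ for a unique odd vector field $X_{\{i\}}$ via the universal-derivation identification $\der(C^\infty(\cm),C^\infty(\cm)^{\phi_0})\cong\cx(\cm)\otimes_{C^\infty(\cm)}C^\infty(\cm)^{\phi_0}$. For the inductive step, assume the $X_J$ with $|J|<m$ have been determined so that \eqref{toprove} holds through order $m-1$. For $|I|=m$ set $D_I:=\alpha_I-(\text{the cross terms in the }X_J,\ |J|<m)$; by the coefficient-wise homomorphism relation together with the matching of cross terms from the previous paragraph, $D_I$ satisfies the plain super-Leibniz rule over $\phi_0$, i.e.\ is a derivation over $\phi_0$, and hence $D_I=X_I\circ\phi_0$ for a unique vector field $X_I$ of parity $|I|$. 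This gives existence and uniqueness at once, so $\phi$ is determined by $\phi_0$ and the $X_I$; the stated parity count follows since the nonempty subsets of $\{1,\ldots,n\}$ split into $2^{n-1}$ of odd and $2^{n-1}-1$ of even cardinality.

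The main obstacle I expect is the second paragraph: verifying that the cross terms produced by the exponential expansion reproduce, with matching Koszul signs, exactly the both-parts-nonempty terms of the homomorphism relation, so that the remainder $D_I$ is genuinely a derivation over $\phi_0$ and the universal-derivation property applies. This is precisely the combinatorial identity that Lemma \ref{sym} is designed to supply, so the real work is to feed the homomorphism relation into that lemma and track the signs coming from reordering the $\tau_i$ past the images $\alpha_J(f)$.
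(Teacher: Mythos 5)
Your proposal is correct and follows essentially the same route as the paper's own proof: the coefficient expansion $\phi=\sum_I\tau_I\alpha_I$, induction on $|I|$ with the $|I|\le 1$ base case settled by the universal-derivation identification, and Lemma \ref{sym} used to match the both-parts-nonempty cross terms of the homomorphism relation against the higher-order terms of the exponential, leaving a remainder that is a derivation over $\phi_0$ and hence of the form $X_I\circ\phi_0$. The only cosmetic difference is that you package the subtraction as an explicit operator $D_I$, whereas the paper phrases the same step as identifying the general solution of its equation \eqref{topro4}.
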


\begin{proof}
Write
\begin{equation}
\label{topro2}
\phi=\sum_{I\subseteq\{1,\ldots,n\}}\tau_I\alpha_I,
\end{equation}
where we now sum over all (including the empty) increasingly ordered subsets and each
$\alpha_I$ is a linear map $C^\infty(\cm)\to C^\infty(\cm)$ of parity $|I|$.
The homomorphism property of $\phi$ implies that
\begin{equation}
\left(\sum_{K\subseteq\{1,\ldots,n\}}\tau_K\alpha_K(fg)\right)=%
\left(\sum_{I\subseteq\{1,\ldots,n\}}\tau_I\alpha_I(f)\right)\cdot%
\left(\sum_{J\subseteq\{1,\ldots,n\}}\tau_J\alpha_J(g)\right).
\label{hom}
\end{equation}
Identifying \eqref{toprove} with the sum \eqref{topro2} rephrases the claim of the theorem as
\begin{equation}
\label{topro3}
\tau_I\alpha_I=\sum_{j=1}^{|I|}\sum_{I=I_1\cup\ldots\cup I_j}%
\mathfrak{S}\left((\tau_{I_1}X_{I_1})\circ\ldots\circ(\tau_{I_j}X_{I_j})\right)\circ\alpha_0.
\end{equation}
The summation runs over all partitions of $I$ into unordered tuples of subsets, each subset carrying
the ordering induced from $I$ (cf.~the definition of the notation $I=I_1\cup\ldots\cup I_j$ above). This will be
proved by induction on $|I|$.

For indices $I$ of length $|I|=0,1$, the assertion holds as we have seen above.
Assume the statement has been proven for indices up to length $k$. Then let $I=\{i_1,\ldots,i_{k+1}\}$
be an index of length $k+1$.
We must assure that \eqref{hom} holds, which means we must find the general solution $\alpha_I$ for
\begin{eqnarray}
\nonumber
\tau_I\alpha_I(fg) &=& \alpha_0(f)\tau_I\alpha_I(g)+(-1)^{p(f)}\tau_I\alpha_I(f)\alpha_0(g)\\
\label{topro4}
&& \sum_{\substack{I=K+L\\ K,L\neq\emptyset}}\tau_K\alpha_K(f)\tau_L\alpha_L(g).
\end{eqnarray}
Since $|K|,|L|\leq k$, it follows that $\tau_K\alpha_K$ and $\tau_L\alpha_L$ 
must have the form \eqref{topro3}. Therefore 
the sum in \eqref{topro4} can be written as
\begin{multline*}
\sum_{\substack{I=K+L\\ K,L\neq\emptyset}}\left(\sum_{j=1}^{|K|}\sum_{K=K_1\cup\ldots\cup K_j}
\mathfrak{S}\left((\tau_{K_1}X_{K_1})\circ\ldots\circ(\tau_{K_j}X_{K_j})\right)(f)\circ\right.\\
\left.\sum_{l=1}^{|L|}\sum_{L=L_1\cup\ldots\cup L_l}
\mathfrak{S}\left((\tau_{L_1}X_{L_1})\circ\ldots\circ(\tau_{L_l}X_{L_l})\right)(g)\right)\circ\alpha_0.
\end{multline*}
By Lemma \ref{sym}, this equals
\begin{equation*}
\sum_{j=2}^{|I|}\sum_{I=I_1\cup\ldots\cup I_j}%
\mathfrak{S}\left((\tau_{I_1}X_{I_1})\circ\ldots\circ%
(\tau_{I_j}X_{I_j})\right)(fg)\circ\alpha_0.
\end{equation*}
The general solution to equation \eqref{topro4} therefore reads
\begin{eqnarray*}
\nonumber
\tau_I\alpha_I &=&  \tau_IX_I\circ\alpha_0+\sum_{j=2}^{|I|}\sum_{I=I_1\cup\ldots\cup I_j}%
\mathfrak{S}\left((\tau_{I_1}X_{I_1})\circ\ldots\circ(\tau_{I_j}X_{I_j})\right)\circ\alpha_0\\
&=& \sum_{j=1}^{|I|}\sum_{I=I_1\cup\ldots\cup I_j}%
\mathfrak{S}\left((\tau_{I_1}X_{I_1})\circ\ldots\circ(\tau_{I_j}X_{I_j})\right)\circ\alpha_0,
\end{eqnarray*}
where $X_I$ is a vector field of parity $|I|$ on $\cm$.
\end{proof}

As we have expected all topological features of $\ihom(\cm,\cm)$ are completely determined
by its underlying space $\Hom(\cm,\cm)$ while all higher points are vector bundles over the latter
space.

\section{Basic properties of the diffeomorphism supergroup}

We now turn to the diffeomorphism supergroup $\sdiff$ and it's
structural analysis. We will see that, exactly as in the previous
subsection, all analytical difficulties pertain to the group
underlying $\sdiff$.

\subsection{Group structure of $\boldsymbol{\sdiff(\cm)}$ in $\boldsymbol{\catsets^\catgr}$}
Define for each $\Lambda\in\catgr$ a set $\sdiff(\cm)(\Lambda)$ by setting
\begin{equation*}
\sdiff(\cm)(\Lambda)=\{f\in\ihom(\cm,\cm)(\Lambda)\mid f\textrm{ invertible}\}.
\end{equation*}
Clearly, each of these sets is a group. Therefore if we can show that they form a functor in
$\catsets^\catgr$, this functor will be a group object in $\catsets^\catgr$. 
In fact we will show that $\sdiff(\cm)$ is a subfunctor of $\ihom(\cm,\cm)$. 

\begin{prop}
\label{sdhom}
For each $\Lambda\in\catgr$ and each morphism $\varphi:\Lambda\to\Lambda'$, the restriction of 
$\ihom(\cm,\cm)(\varphi)$ to $\sdiff(\cm)(\Lambda)$ induces a group homomorphism
\begin{equation*}
\sdiff(\cm)(\varphi):\sdiff(\cm)(\Lambda)\to\sdiff(\cm)(\Lambda').
\end{equation*}
\end{prop}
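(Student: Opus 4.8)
The plan is to read the statement as an instance of a purely formal fact. The discussion preceding the last Lemma establishes that $\icomp$ is a natural transformation $\ihom(\cm,\cm')\times\ihom(\cm',\cm'')\Rightarrow\ihom(\cm,\cm'')$, and that Lemma adds associativity together with the unit $e_\cm$, realised as a subfunctor of $\ihom(\cm,\cm)$. Taken together (for $\cm=\cm'=\cm''$) this says that $\ihom(\cm,\cm)$ is a monoid object in $\catsets^\catgr$; concretely, for every $\varphi:\Lambda\to\Lambda'$ the structure map $\Phi:=\ihom(\cm,\cm)(\varphi)$ is a homomorphism of monoids from $(\ihom(\cm,\cm)(\Lambda),\icomp_\Lambda,\Pi_\cm)$ to $(\ihom(\cm,\cm)(\Lambda'),\icomp_{\Lambda'},\Pi_\cm)$. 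Since $\sdiff(\cm)(\Lambda)$ is by definition exactly the group of invertible elements of the monoid $\ihom(\cm,\cm)(\Lambda)$, the proposition reduces to the elementary observation that a monoid homomorphism carries units to units and restricts to a group homomorphism on them.

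First I would spell out why $\Phi$ is multiplicative and unit-preserving. Multiplicativity is precisely the naturality square of $\icomp$ specialised to $\cm=\cm'=\cm''$: evaluating naturality at a pair $(f,g)$ gives $\Phi(\icomp_\Lambda(f,g))=\icomp_{\Lambda'}(\Phi(f),\Phi(g))$. Unit-preservation follows because $e_\cm$ is a \emph{subfunctor} of $\ihom(\cm,\cm)$, so $\Phi$ maps the singleton $e_\cm(\Lambda)=\{\Pi_\cm\}$ into $e_\cm(\Lambda')=\{\Pi_\cm\}$; hence $\Phi(\Pi_\cm)=\Pi_\cm$.

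Next I would treat invertibility, the only step with any content. Given $f\in\sdiff(\cm)(\Lambda)$ with two-sided $\icomp_\Lambda$-inverse $f^{-1}$, the natural candidate for an inverse of $\Phi(f)$ is $\Phi(f^{-1})$. Applying multiplicativity and unit-preservation,
\[
\icomp_{\Lambda'}(\Phi(f),\Phi(f^{-1}))=\Phi(\icomp_\Lambda(f,f^{-1}))=\Phi(\Pi_\cm)=\Pi_\cm,
\]
and symmetrically $\icomp_{\Lambda'}(\Phi(f^{-1}),\Phi(f))=\Pi_\cm$. Thus $\Phi(f)$ is invertible, so $\Phi$ indeed restricts to a map $\sdiff(\cm)(\Lambda)\to\sdiff(\cm)(\Lambda')$; being the restriction of the multiplicative map $\Phi$, this restriction is a group homomorphism, which is the assertion.

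I do not expect a genuine obstacle here, since everything is formal once the earlier Lemma is in hand; the real analytic difficulties of $\sdiff(\cm)$ lie elsewhere. If one preferred to avoid the monoid-object language, multiplicativity could instead be verified by an explicit diagram chase: writing $\id_{\cp(\Lambda)}\times f=(\pr_{\cp(\Lambda)},f)$ for the graph map, both $\Phi(\icomp_\Lambda(f,g))$ and $\icomp_{\Lambda'}(\Phi(f),\Phi(g))$ unwind to $g\circ(\id_{\cp(\Lambda)}\times f)\circ(\cp(\varphi)\times\id_\cm)$, using that $\cp(\varphi)\times\id_\cm$ acts only on the superpoint factor and so intertwines the graph constructions at levels $\Lambda$ and $\Lambda'$. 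The only point requiring care is this bookkeeping of the graph maps, but it is routine.
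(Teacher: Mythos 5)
Your proposal is correct, and its mathematical core coincides with the paper's: both arguments amount to showing that $\Phi=\ihom(\cm,\cm)(\varphi)$ preserves the unit $\Pi_\cm$ and the composition $\icomp$, and then restricting to invertible elements. The difference is in where the multiplicativity comes from. The paper proves it by hand: it writes both $\Phi(g\circ f)$ and $\Phi(g)\circ\Phi(f)$ as explicit composites of morphisms and checks on $\Lambda''$-points $(p,m)$ that each sends $(p,m)$ to $g(q,f_{\Lambda''}(q,m))$ with $q=\cp(\varphi)(p)$ --- which is exactly your fallback diagram chase, and is nothing but the verification (for $\cm=\cm'=\cm''$) of the naturality of $\icomp$. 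Your primary route instead \emph{cites} that naturality; be aware that the paper only asserts it in the text preceding the Lemma (whose proof reads ``clear from the definition'') and never verifies it, so the real content of this proposition in the paper is precisely that verification. Leaning on the assertion shifts the burden rather than discharging it, and it is your explicit unwinding of the graph maps that makes your argument self-contained rather than circular. Conversely, you spell out a step the paper leaves implicit: that a homomorphism of monoids carries invertibles to invertibles, so the restriction genuinely lands in $\sdiff(\cm)(\Lambda')$. The paper's proof never addresses this (it surfaces only in the following Corollary, where it is invoked as a consequence of the proposition), yet the codomain claim in the statement requires it; making it explicit is a small but genuine improvement.
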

\begin{proof}
Applying the definition \eqref{eqn:scinftymap} to the neutral element $\Pi_\cm:\cp(\Lambda)\times\cm\to\cm$,
we see immediately that
\begin{equation*}
\Pi_\cm\circ(\cp(\varphi)\times\id_\cm)=\Pi_\cm,
\end{equation*}
i.e., $\ihom(\cm,\cm)(\varphi)$ maps the unit element to the unit element. Now let 
$f,g\in\sdiff(\cm)(\Lambda)$ be given. We have to show that
\begin{equation*}
\ihom(\cm,\cm)(\varphi)(g\circ f)=(\ihom(\cm,\cm)(\varphi)(g))\circ(\ihom(\cm,\cm)(\varphi)(f)).
\end{equation*}
It is most insightful to compare the definition of the two functors. The left hand side corresponds to 
the composition
\begin{equation}
\label{f2}
\xymatrix@1{\cp(\Lambda')\times\cm \ar[rr]^{(\cp(\varphi),\id_\cm)} &&%
\cp(\Lambda)\times\cm \ar[rr]^{(\id_{\cp(\Lambda)},f)} && \cp(\Lambda)\times\cm \ar[r]^g &\cm},
\end{equation}
while the right hand side corresponds to
\begin{multline}
\label{f3}
\xymatrix@1{\cp(\Lambda')\times\cm \ar[rrr]^(.43){(\id_{\cp(\Lambda')},\cp(\varphi),\id_\cm)} &&&%
\cp(\Lambda')\times\cp(\Lambda)\times\cm \ar[rr]^(.7){(\id_{\cp(\Lambda')},f)} &&}\\
\xymatrix{&\ar[r]& \cp(\Lambda')\times\cm \ar[rr]^{(\cp(\varphi),\id_\cm)} &&%
\cp(\Lambda)\times\cm \ar[r]^g &\cm}.
\end{multline}
Let now $m\in\cm(\Lambda'')$ be some $\Lambda''$-point of $\cm$, $p\in\cp(\Lambda')(\Lambda'')$ be
a $\Lambda''$-point of $\cp(\Lambda')$ and let $q\in\cp(\Lambda)(\Lambda'')$ be its image under
$\cp(\varphi)$, i.e., $q=\cp(\varphi)(p)$. Then \eqref{f2} will map the pair $(p,m)$ to
\begin{equation*}
\xymatrix@1{(p,m) \ar@{|->}[r] & (q,m) \ar@{|->}[r] & (q,f_{\Lambda''}(q,m)) \ar@{|->}[r] &%
g(q,f_{\Lambda''}(q,m))}.
\end{equation*}
On the other hand, \eqref{f3} will map $(p,m)$ as
\begin{equation*}
\xymatrix@-=12pt{(p,m) \ar@{|->}[r] & (p,q,m) \ar@{|->}[r] & (p,f_{\Lambda''}(q,m)) \ar@{|->}[r] &%
(q,f_{\Lambda''}(q,m)) \ar@{|->}[r] & g(q,f_{\Lambda''}(q,m))}.
\end{equation*}
This shows that all components of the two functor morphisms \eqref{f2} and \eqref{f3} are indeed identical.
\end{proof}

\begin{cor}
$\sdiff(\cm)$ is a subfunctor of $\ihom(\cm,\cm)$ and a group object in $\catsets^\catgr$.
\end{cor}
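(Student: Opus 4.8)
The plan is to derive the Corollary almost entirely from Proposition~\ref{sdhom}, which already carries the substantive content; what remains is to assemble two formal observations, namely that $\sdiff(\cm)$ is a genuine subfunctor of $\ihom(\cm,\cm)$ and that, as such, it is a group object in $\catsets^\catgr$.

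For the subfunctor claim I would first note that $\sdiff(\cm)(\Lambda)\subseteq\ihom(\cm,\cm)(\Lambda)$ holds by definition, so the componentwise inclusions assemble into a natural transformation as soon as the transition maps restrict correctly. The only point to verify is that $\ihom(\cm,\cm)(\varphi)$ sends invertible elements to invertible ones. This follows at once from Proposition~\ref{sdhom}, whose proof shows that $\ihom(\cm,\cm)(\varphi)$ fixes the unit $\Pi_\cm$ and is multiplicative for $\icomp$: if $f\circ f^{-1}=f^{-1}\circ f=\Pi_\cm$, then the image of $f^{-1}$ under $\ihom(\cm,\cm)(\varphi)$ is a two-sided inverse of the image of $f$. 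Hence $\ihom(\cm,\cm)(\varphi)$ restricts to a map $\sdiff(\cm)(\Lambda)\to\sdiff(\cm)(\Lambda')$, and these restrictions serve as the transition maps of a subfunctor $\sdiff(\cm)\hookrightarrow\ihom(\cm,\cm)$.

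For the group-object claim I would invoke the correspondence, recalled in Section~\ref{sect:sgrp}, between group objects in $\catsets^\catgr$ and functors $\catgr\to\catgrp$; this rests on the fact that finite products in a functor category are computed pointwise and the terminal object is the constant one-point functor. We already know that each $\sdiff(\cm)(\Lambda)$ is a group under $\icomp_\Lambda$, and Proposition~\ref{sdhom} shows that every transition map $\sdiff(\cm)(\varphi)$ is a group homomorphism; therefore $\sdiff(\cm)$ lifts to a functor $\catgr\to\catgrp$ and is a group object in $\catsets^\catgr$. If one prefers the structure maps explicitly, multiplication is the restriction of the natural transformation $\icomp$ (which restricts because composition of invertibles is invertible), the unit is the subfunctor $e_\cm$, and inversion is $f\mapsto f^{-1}$ on each component; naturality of inversion is precisely the assertion that $\sdiff(\cm)(\varphi)$ commutes with taking inverses, a direct consequence of its being a homomorphism, while the group-axiom diagrams hold because they are tested componentwise, where each $\sdiff(\cm)(\Lambda)$ is already a group.

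Since all of the genuine work has been front-loaded into Proposition~\ref{sdhom}, I do not expect a real obstacle. The single step that repays attention is the preservation of invertibility by the transition maps; once that is in hand, both assertions of the Corollary are formal. The only secondary point worth stating cleanly is the naturality of inversion, which again reduces to the homomorphism property already established.
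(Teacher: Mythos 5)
Your proposal is correct and follows essentially the same route as the paper: both arguments rest entirely on Proposition~\ref{sdhom}, using it to show that the transition maps $\ihom(\cm,\cm)(\varphi)$ preserve invertibility (hence restrict, giving the subfunctor), and then concluding the group-object claim from the fact that each component is a group and each transition map is a group homomorphism. Your extra details (the monoid-homomorphism argument for preservation of inverses, and the explicit naturality of inversion) are just elaborations of steps the paper treats as immediate.
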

\begin{proof}
By Proposition \ref{sdhom}, for $\varphi:\Lambda\to\Lambda'$, $\ihom(\cm,\cm)(\varphi)$ maps invertible 
morphisms to invertible morphisms, so the restriction of $\ihom(\cm,\cm)(\varphi)$ to
$\sdiff(\cm)(\Lambda)$ is well-defined. This means that the inclusion 
$\sdiff(\cm)\subset\ihom(\cm,\cm)(\varphi)$ is a functor morphism, and thus $\sdiff(\cm)$ is a
subfunctor. Since each $\sdiff(\cm)(\Lambda)$ is a group and each $\sdiff(\cm)(\varphi)$ is a group
homomorphism, the second assertion is clear.
\end{proof}

\subsection{Factoring out the underlying group $\boldsymbol{\Aut(\cm)}$}

Associated with the null object $\Lambda_0=\realR$ of $\catgr$ are the underlying points
(or $\realR$-points) of a supergroup. In our case,
the group $\sdiff(\cm)(\realR)$ obviously consists of the
invertible elements of $\End(\cm)$, i.e., of the automorphisms of $\cm$.
We shall denote this group by $\Aut(\cm)$. As mentioned above, the
initial and final morphisms $c_\Lambda\from \realR\to \Lambda$ and
$\epsilon_\Lambda\from \Lambda\to \realR$ of $\catgr$ furnish
canonical inclusions, resp. projections

\begin{eqnarray*}
\sdiff(c_\Lambda) &:& \Aut(\cm)\hookrightarrow\sdiff(\cm)(\Lambda),\\
\sdiff(\epsilon_\Lambda) &:& \sdiff(\cm)(\Lambda)\to\Aut(\cm).
\end{eqnarray*}

This turns $\Aut(\cm)$ into a subgroup of $\sdiff(\cm)(\Lambda)$ and
each $\cn(\cm)(\Lambda):=\ker(\sdiff(\cm)(\epsilon_{\Lambda}))$ is a
complementary normal subgroup. Since this construction is functorial,
the next lemma is immediate (compare also the discussion in Section \ref{sect:sgrp}).

\begin{lemma}\label{lem:semidirceProductDecomposition}
 The assignment $\Lambda\mapsto \cn(\cm)(\Lambda)$ defines a normal
 super subgroup $\cn(\cm)$ of $\sdiff(\cm)$ and we have
 \begin{equation*}
  \sdiff(\cm) \cong \cn(\cm)\rtimes \Aut(\cm)
 \end{equation*}
 (where we regard $\Aut(\cm)$ as the constant supergroup
 $\Lambda\mapsto \Aut(\cm)$).
\end{lemma}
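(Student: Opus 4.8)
The plan is to specialize the general supergroup construction of Section~\ref{sect:sgrp} to $\sdiff(\cm)$, since every structural map needed is already in place. The statement decomposes into three tasks: (i) establish the semidirect product pointwise, i.e.\ for each fixed $\Lambda\in\catgr$; (ii) verify that $\cn(\cm)$ is a normal subfunctor of $\sdiff(\cm)$; and (iii) assemble the pointwise decompositions into an isomorphism of functors. The key structural input is that $\realR=\Lambda_0$ is a \emph{zero object} of $\catgr$: it is initial (the unit $c_\Lambda:\realR\to\Lambda$ is the unique graded algebra morphism $r\mapsto r\cdot 1$) and terminal (the augmentation $\epsilon_\Lambda:\Lambda\to\realR$ killing nilpotents is the unique graded morphism, as $\realR$ is purely even). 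This is exactly what makes the component decomposition \eqref{eq:e1}--\eqref{eq:e2} coherent.

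First I would treat a fixed $\Lambda$. Since $\epsilon_\Lambda\circ c_\Lambda=\id_\realR$, functoriality of $\sdiff(\cm)$ gives $\sdiff(\epsilon_\Lambda)\circ\sdiff(c_\Lambda)=\id_{\Aut(\cm)}$. Hence $\sdiff(c_\Lambda)$ is a split monomorphism of groups whose image is a copy of $\Aut(\cm)$ inside $\sdiff(\cm)(\Lambda)$, while $\sdiff(\epsilon_\Lambda)$ is a split epimorphism with kernel $\cn(\cm)(\Lambda)$. The standard splitting lemma for groups then yields $\sdiff(\cm)(\Lambda)=\cn(\cm)(\Lambda)\rtimes\Aut(\cm)$ with $\cn(\cm)(\Lambda)$ normal, exactly as in \eqref{eq:e1}.

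For functoriality I would invoke the two zero-object relations. For any $\varphi:\Lambda\to\Lambda'$ one has $\epsilon_{\Lambda'}\circ\varphi=\epsilon_\Lambda$ by terminality and $\varphi\circ c_\Lambda=c_{\Lambda'}$ by initiality. Applying the functor $\sdiff(\cm)$ to the first relation produces the commuting square
\[
\xymatrix{
\sdiff(\cm)(\Lambda) \ar[d]_{\sdiff(\epsilon_\Lambda)}\ar[r]^{\sdiff(\varphi)}& \sdiff(\cm)(\Lambda')\ar[d]^{\sdiff(\epsilon_{\Lambda'})}\\
\Aut(\cm) \ar[r]^{\id}& \Aut(\cm)
}
\]
which shows that $\sdiff(\varphi)$ carries $\cn(\cm)(\Lambda)=\ker\sdiff(\epsilon_\Lambda)$ into $\cn(\cm)(\Lambda')=\ker\sdiff(\epsilon_{\Lambda'})$; together with Proposition~\ref{sdhom} this makes $\cn(\cm)$ a subfunctor of $\sdiff(\cm)$, and normality holds at each $\Lambda$ because it is preserved by the homomorphisms $\sdiff(\varphi)$. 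The second relation shows that the inclusions $\sdiff(c_\Lambda)$ assemble into a natural transformation from the constant supergroup $\Aut(\cm)$ into $\sdiff(\cm)$. Feeding both facts into the component identity \eqref{eq:e1} promotes it to the functor identity \eqref{eq:e2}, giving $\sdiff(\cm)\cong\cn(\cm)\rtimes\Aut(\cm)$ in $\catsets^\catgr$.

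There is no analytic obstacle at this stage; the entire content is formal, which is precisely the intended payoff --- the decomposition isolates all the hard analysis into the underlying group $\Aut(\cm)$, to be handled separately. The only genuine care required is coherence of the splitting across $\catgr$, and this is exactly what the two zero-object relations guarantee. Rather than a difficulty, I would flag one conceptual caveat: $\Aut(\cm)$ enters only as the \emph{constant} functor $\Lambda\mapsto\Aut(\cm)$, so the $\rtimes$ is an isomorphism of group-valued functors and not of Lie supergroups --- consistent with the observation in Section~\ref{sect:sgrp} that no such splitting can exist inside $\catsman$ itself.
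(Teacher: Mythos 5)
Your proof is correct and takes essentially the same route as the paper: the paper declares the lemma ``immediate'' by pointing back to the general splitting argument for supergroups in Section~\ref{sect:sgrp} (equations \eqref{eq:e1}--\eqref{eq:e2}), which is exactly the pointwise split exact sequence from $\epsilon_\Lambda\circ c_\Lambda=\id_\realR$ plus functoriality via $\epsilon_{\Lambda'}\circ\varphi=\epsilon_\Lambda$ that you spell out. Your extra check that the inclusions assemble naturally via $\varphi\circ c_\Lambda=c_{\Lambda'}$ merely makes explicit what the paper leaves implicit.
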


As pointed out in Section \ref{sect:sgrp}, $\sdiff(\cm)$ even splits
as a direct sum in $\catsets^\catgr$. The splitting as a semidirect will, however,
even hold as Lie supergroups.
For each $f\from \cp(\Lambda)\times \cm\to\cm$ in $\sdiff(\Lambda)$,
the automorphism $\sdiff(\epsilon_{\Lambda})(f)$ of $\cm$
is given by
\begin{equation*}
 \cm\cong\cp(\realR)\times\cm
  \xrightarrow{\cp(\epsilon_{\Lambda})\times \id_{\cm}}
 \cp(\Lambda)\times\cm \xrightarrow{f}\cm
\end{equation*}
We call this the automorphism underlying $f$. That it is actually 
invertible is due to the fact that $\sdiff$ defines a functor.
From this it follows that $\cn(\cm)(\Lambda)$ consists of maps depending
non-trivially on the odd coordinates of $\cp_\Lambda$ and
whose underlying automorphism is the identity of $\Aut(\cm)$.

In Section \ref{sect:AutAsAFrechetLieGroup}, we shall put a supersmooth
structure on $\sdiff(\cm)$ with the aid of the decomposition from Lemma
\ref{lem:semidirceProductDecomposition}. This becomes feasible because
we shall derive charts for $\sdiff(\cm)$ respecting this decomposition.

\subsection{Invertibility of morphisms}

In this section we shall obtain an
explicit inversion formula for supersmooth diffeomorphisms.

\begin{thm}
\label{sdiffinv}
A supersmooth morphism $\varphi:\cp(\Lambda)\times\cm\to\cm$ is invertible if and only if its underlying
morphism $\varphi_\realR:\cm\to\cm$ is invertible. In this case, writing the algebra homomorphism $\phi$ as
\begin{equation*}
\phi=\exp(\sum_{I\subseteq\{1,\ldots,n\}}\tau_IX_I)\circ\phi_0
\end{equation*}
(in the notation of Theorem \ref{sdiff}), its inverse is given by
\begin{equation}
\label{invhom}
\phi^{-1}=\phi_0^{-1}\circ\exp(-\!\!\sum_{I\subseteq\{1,\ldots,n\}}\!\tau_IX_I).
\end{equation}
\end{thm}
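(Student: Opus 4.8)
The plan is to pass from the group $\sdiff(\cm)(\Lambda)$ to $\Lambda$-algebra automorphisms and thereby reduce $\icomp$-invertibility to ordinary operator invertibility. Using the identification $\Hom_\catsman(\cp(\Lambda)\times\cm,\cm)\cong\Hom_\catsalg(C^\infty(\cm),C^\infty(\cm)\otimes\Lambda)$, I first record the composition dictionary. Since $\icomp(f,g)=g\circ(\id_{\cp(\Lambda)}\times f)$ and the pullback of $\id_{\cp(\Lambda)}\times f$ is exactly the $\Lambda$-linear extension $\widetilde{\phi_f}\from C^\infty(\cm)\otimes\Lambda\to C^\infty(\cm)\otimes\Lambda$ of $\phi_f$ (identity on $\Lambda$, equal to $\phi_f$ on $C^\infty(\cm)$), the product $\icomp(f,g)$ has algebra homomorphism $\widetilde{\phi_f}\circ\phi_g$. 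Passing to extensions gives $\widetilde{\phi_{\icomp(f,g)}}=\widetilde{\phi_f}\circ\widetilde{\phi_g}$, so $f\mapsto\widetilde{\phi_f}$ is a homomorphism into the group of $\Lambda$-algebra automorphisms of $C^\infty(\cm)\otimes\Lambda$ carrying the unit $\Pi_\cm$ to $\id$. Hence $f$ is $\icomp$-invertible precisely when $\widetilde{\phi_f}$ is invertible, and its inverse is read off from $\widetilde{\phi_f}^{-1}$.

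Next I would exploit the factorization $\phi=\exp(\sum_I\tau_IX_I)\circ\phi_0$ of Theorem \ref{sdiff}. Set $Y:=\sum_I\tau_IX_I$. Each summand $\tau_IX_I$ is an \emph{even} derivation of $C^\infty(\cm)\otimes\Lambda$: the parities of $\tau_I$ and $X_I$ are both $|I|$, so their product is even, and the super-sign incurred in moving $\tau_I$ past a factor is exactly the one making $\tau_IX_I$ satisfy the even Leibniz rule. Moreover $Y$ is nilpotent, since the $\tau_I$ lie in the nilpotent ideal of $\Lambda$, so $\exp(Y)$ is a finite sum and a $\Lambda$-algebra automorphism with two-sided inverse $\exp(-Y)$. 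On extensions this reads $\widetilde{\phi}=\exp(Y)\circ\widetilde{\phi_0}$; since $\exp(Y)$ is always invertible and $\widetilde{\phi_0}$ is invertible iff $\phi_0$ is an algebra automorphism, we conclude that $\widetilde{\phi}$ is invertible iff $\phi_0$ is, i.e.\ iff the underlying morphism $\varphi_\realR$ is invertible.

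For the ``only if'' half I would prefer the functorial shortcut: $\ihom(\cm,\cm)(\epsilon_\Lambda)$ is a homomorphism of semigroups for $\icomp$ (naturality of $\icomp$ along $\epsilon_\Lambda\from\Lambda\to\realR$, with $\icomp_\realR$ on $\End(\cm)$ being ordinary composition) and sends $\phi$ to $\phi_0$; thus an $\icomp$-inverse of $\phi$ projects to a composition-inverse of $\phi_0$, forcing $\varphi_\realR$ to be invertible. The ``if'' half is then immediate from the explicit inverse: inverting the factorization gives $\widetilde{\phi}^{-1}=\widetilde{\phi_0}^{-1}\circ\exp(-Y)=\widetilde{\phi_0^{-1}}\circ\exp(-Y)$, and restricting to $C^\infty(\cm)$ yields $\phi^{-1}=\phi_0^{-1}\circ\exp(-\sum_I\tau_IX_I)$, which is \eqref{invhom}. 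I would close by confirming both $\icomp$-identities through the dictionary, e.g.\ $\widetilde{\phi}\circ\widetilde{\phi^{-1}}=\exp(Y)\circ\widetilde{\phi_0}\circ\widetilde{\phi_0^{-1}}\circ\exp(-Y)=\id$, whose restriction is $\Pi_\cm^{*}$, and symmetrically for the other order.

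The step I expect to be the main obstacle is getting the composition dictionary exactly right: one must apply the $\Lambda$-linear extension to the \emph{first} factor of $\icomp$ and keep the order straight, and one must be sure that $\exp(Y)$, though defined as a $\Lambda$-linear operator, composes with the genuine pullbacks $\widetilde{\phi_0}^{\pm1}$ as the bookkeeping suggests. The accompanying check that $\tau_IX_I$ is an even derivation (the super-sign for commuting $\tau_I$ past arguments) is the only genuinely computational point; once it and the nilpotency of $Y$ are settled, the remaining manipulations are purely formal.
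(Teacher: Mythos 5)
Your proof is correct, and while the overall strategy coincides with the paper's (factor $\phi$ via Theorem \ref{sdiff} and show that the two exponential factors cancel), the key cancellation is established by a genuinely different mechanism. The paper proves $\exp(-\sum_I\tau_IX_I)\circ\exp(\sum_J\tau_JX_J)=\id$ by brute force: it expands both exponentials through the structure equation \eqref{topro3}, invokes Lemma \ref{sym}, and checks that every ordered tuple $(K_1,\ldots,K_n)$ of index sets enters with total coefficient $\sum_{k=0}^n(-1)^k/(k!(n-k)!)=(1+(-1))^n/n!=0$. You instead observe that $Y=\sum_I\tau_IX_I$ is an even, nilpotent derivation of the $\Lambda$-algebra $C^\infty(\cm)\otimes\Lambda$, so that $\exp(Y)$ is an algebra automorphism and $\exp(Y)\circ\exp(-Y)=\exp(0)=\id$ because $Y$ commutes with $-Y$; the paper's binomial cancellation is exactly the coefficientwise shadow of this one-line identity. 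Your version also buys two things the paper leaves implicit: first, the composition dictionary ($\icomp(f,g)\leftrightarrow\widetilde{\phi_f}\circ\phi_g$, with the $\Lambda$-linear extension applied to the first factor and multiplicativity under extension), which is what justifies reducing $\icomp$-invertibility in both orders to operator invertibility and certifies that the inverse operator really is a $\Lambda$-point of $\ihom(\cm,\cm)$; second, the ``only if'' direction of the theorem, which you obtain from naturality along $\epsilon_\Lambda$ --- the paper's proof only verifies the cancellation and never argues that invertibility of $\phi$ forces invertibility of $\phi_0$. What the paper's computation buys in exchange is self-containedness: it runs entirely inside the parametrization \eqref{topro3} already derived in the proof of Theorem \ref{sdiff}, needing no auxiliary facts about derivations of $C^\infty(\cm)\otimes\Lambda$ or about exponentials of commuting nilpotent operators.
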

\begin{proof}
We have to show that
\begin{equation}
\label{topro5}
\exp(-\!\!\sum_{I\subseteq\{1,\ldots,n\}}\!\tau_I X_I)\circ%
\exp(\!\sum_{J\subseteq\{1,\ldots,n\}}\!\tau_J X_J)=\id_{C^\infty_\cm(\cm)}.
\end{equation}
We can write
\begin{equation}
\label{topro6}
\exp(-\!\!\sum_{I\subseteq\{1,\ldots,n\}}\!\tau_I X_I)\circ\exp(\!\sum_{J\subseteq\{1,\ldots,n\}}\!\tau_J X_J)=%
1+\sum_K\tau_K\alpha_K
\end{equation}
by expanding both exponentials. Using \eqref{topro3}, we rewrite the expression on the left hand side as
\begin{multline*}
1+\left(\sum_{j=1}^{|I|}\sum_{I=I_1\cup\ldots\cup I_j}%
\mathfrak{S}\left((-\tau_{I_1}X_{I_1})\circ\ldots\circ(-\tau_{I_j}X_{I_j})\right)\right)\circ\\
\left(\sum_{k=1}^{|J|}\sum_{J=J_1\cup\ldots\cup J_k}%
\mathfrak{S}\left((\tau_{J_1}X_{J_1})\circ\ldots\circ(\tau_{J_k}X_{J_k})\right)\right)
\end{multline*}
Now $\tau_K\alpha_K$ on the right hand side of \eqref{topro6} is a sum over all partitions of $K$ 
into ordered tuples of
subsets. Pick one such tuple $\{K_1,\ldots,K_n\}$; the tuple, and each of the $K_i$, is ordered, and 
their union is $K$. On the left hand side, we have the corresponding sum
\begin{equation*}
\frac{1}{k!(n-k)!}\sum_{k=0}^n(-1)^k(\tau_{K_1}X_{K_1})\circ\ldots\circ(\tau_{K_n}X_{K_n})
\end{equation*}
of all ways of realizing this sequence of indices by contributions from either two of the exponentials in
\eqref{topro5}. But
\[
\sum_{k=0}^n\frac{1}{k!(n-k)!}(-1)^k=\frac{1}{n!}(1+(-1))^n=0.
\]
Therefore, each $\alpha_K$ on the right hand side of \eqref{topro6} receives only vanishing contributions,
and thus \eqref{topro5} holds.
\end{proof}

\begin{cor}
$\sdiff(\cm)$ is the restriction of $\ihom(\cm,\cm)$ onto
$\Aut(\cm)\subset\ihom(\cm,\cm)(\realR)$.
\end{cor}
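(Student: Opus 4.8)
The plan is to read ``restriction onto $\Aut(\cm)$'' as the subfunctor of $\ihom(\cm,\cm)$ cut out by the condition that the underlying morphism lie in $\Aut(\cm)$, and then to identify this subfunctor with $\sdiff(\cm)$ by a direct appeal to Theorem~\ref{sdiffinv}. First I would make precise that the passage to the underlying morphism is exactly the map induced by the terminal arrow $\epsilon_\Lambda\from\Lambda\to\realR$: for each $\Lambda$ the functor yields
\[
\ihom(\cm,\cm)(\epsilon_\Lambda)\from\ihom(\cm,\cm)(\Lambda)\to\ihom(\cm,\cm)(\realR)=\End(\cm),
\]
and on the explicit form $\phi=\exp(\sum_{I}\tau_IX_I)\circ\phi_0$ from Theorem~\ref{sdiff} this map sends $\phi$ to $\phi_0$, since $\epsilon_\Lambda$ annihilates every $\tau_I$. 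Thus $\ihom(\cm,\cm)(\epsilon_\Lambda)(f)$ is precisely the underlying morphism $f_\realR$.

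Next I would introduce the candidate subfunctor $\cR$ by setting $\cR(\Lambda):=\ihom(\cm,\cm)(\epsilon_\Lambda)^{-1}(\Aut(\cm))$ and verify that it is a subfunctor of $\ihom(\cm,\cm)$. The only point to check is functoriality: for $\varphi\from\Lambda\to\Lambda'$ one has $\epsilon_{\Lambda'}\circ\varphi=\epsilon_\Lambda$, whence
\[
\ihom(\cm,\cm)(\epsilon_\Lambda)=\ihom(\cm,\cm)(\epsilon_{\Lambda'})\circ\ihom(\cm,\cm)(\varphi),
\]
so $\ihom(\cm,\cm)(\varphi)$ maps $\cR(\Lambda)$ into $\cR(\Lambda')$. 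This is the same functoriality argument already exploited in Lemma~\ref{lem:semidirceProductDecomposition}.

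Finally, Theorem~\ref{sdiffinv} does all the work: $f\in\ihom(\cm,\cm)(\Lambda)$ is invertible if and only if its underlying morphism $f_\realR$ is invertible, i.e.\ if and only if $f_\realR\in\Aut(\cm)$. Combining this with the first step gives
\[
\sdiff(\cm)(\Lambda)=\{f\mid f\textrm{ invertible}\}=\ihom(\cm,\cm)(\epsilon_\Lambda)^{-1}(\Aut(\cm))=\cR(\Lambda)
\]
for every $\Lambda$, and since both sides are subfunctors this is an equality in $\catsets^\catgr$. I do not expect a genuine obstacle here: the analytic and algebraic substance was already absorbed into Theorem~\ref{sdiffinv}, and the only care required is in pinning down the meaning of ``restriction onto a subset of the underlying space'' as the preimage subfunctor above and in confirming that it is well defined.
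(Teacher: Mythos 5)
Your proof is correct and is exactly the argument the paper has in mind: the corollary is stated there without proof as an immediate consequence of Theorem~\ref{sdiffinv}, and your reading of ``restriction'' as the preimage subfunctor $\Lambda\mapsto\ihom(\cm,\cm)(\epsilon_\Lambda)^{-1}(\Aut(\cm))$ matches the paper's own usage (cf.\ the subfunctor construction in \eqref{eqn:openSubfunctorOfSdiff}). Your two supporting observations---that $\ihom(\cm,\cm)(\epsilon_\Lambda)$ extracts the underlying morphism, and that functoriality follows from $\epsilon_{\Lambda'}\circ\varphi=\epsilon_\Lambda$---are precisely the points the paper established earlier and tacitly reuses here.
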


%

\section{The Lie supergroup $\boldsymbol{\sdiff(\cm)}$}

The analytically involved part of the supersmooth structure on
$\sdiff(\cm)$ comes from the underlying group
$\Aut(\cm)$. In this section we show how to put a Lie group
structure on it. Up to a nilpotent semidirect factor,
$\Aut(\cm)$ can be identified with the automorphism group of a
finite-dimensional vector bundle, so that we can borrow the
smooth structure on it from
\cite{Wockel07Lie-group-structures-on-symmetry-groups%
-of-principal-bundles}. In order to do so, we have to assume that
the underlying manifold $\cm(\realR)$ is compact throughout.

\subsection{The structure sheaf of a supermanifold}
\label{sect:theStructureSheafOfASuperManifold}

The connection between finite-dimensional supermanifolds and vector bundles
is most easily described in the ringed-space picture (cf.\ 
\cite{DeligneMorgan99Notes-on-supersymmetry}), which we
will switch to for this and the following subsection. How to get from a super 
manifold in the categorical sense to the ringed space is described in
\cite[Sect.\ 5.1]{Sachse08A-Categorical-%
Formulation-of-Superalgebra-and-Supergeometry}.

When viewed as a ringed space, an $m|n$-dimensional supermanifold
$\cm$ is an $m$-dimensional manifold $M$, together with a sheaf
$C^{\infty}_{\cm}$ of $\Z_{2}$-graded supercommutative $\realR$ algebras
(i.e., $a\cdot b=(-1)^{|a|\cdot|b|}b\cdot a$ for homogeneous elements), which is locally
isomorphic to $C^{\infty}_{\realR^{m}}\otimes \Lambda_{n}$ where
$C^{\infty}_{\realR^{m}}$ is the sheaf of ordinary smooth functions on $\realR^m$. A
morphism between supermanifolds in this picture is a smooth morphism of the underlying
manifolds together with a morphism of sheaves.

Recall that the structure sheaf
$C^\infty_\cm$ of a supermanifold $\cm$ is filtered by the powers of the
nilpotent ideal sheaf $\cj\subset C^\infty_\cm$, i.e., \[
C^\infty_\cm\supset\cj\supset\cj^2\supset\ldots \] The sheaf is not
$\intZ$-graded, however, because morphisms of superalgebras only
preserve the $\intZ/2$-degree. Dividing out $\cj$ yields the underlying
manifold $M$,
and the quotient morphism $C^\infty_\cm\to C^\infty_\cm/\cj$ endows us
with a canonical embedding $\mathrm{cem}:M\hookrightarrow\cm$ as
a closed subsupermanifold. This construction is functorial, i.e., we
obtain a functor $\mathrm{red}:\catsman\to\catman$.

The sheaf $\cj/\cj^2$ has a natural $C^\infty_\cm/\cj$-module structure
on it, given by $[f]\cdot [\sigma]=[f\cdot \sigma]$. This turns
$\cj/\cj^2$ into a locally free sheaf of modules over $C^{\infty}_{\cm}$, which in
turn gives rise to a smooth vector bundle $E\to M$ with $\Gamma(E)\cong
\cj/\cj^2$. By Batchelor's Theorem
\cite{Batchelor79The-structure-of-supermanifolds} there exists a
(non-canonical) isomorphism $\xi\from \Gamma(\Lambda^{\bullet} E)\to
C^{\infty}_{\cm}$ covering $\id_{M}$, i.e., $\xi$ preserves the
$\Z_{2}$-grading. However, each two choices $\xi,\xi'$ give rise to an
isomorphism $\xi^{-1} \circ \xi'\from \Lambda^{\bullet} E \to
\Lambda^{\bullet} E$ covering $\id_{M}$, which gives in particular
rise to a vertical bundle automorphism $E\to E$. We shall call such a 
pair $(E,\xi)$ a \emph{vector bundle associated with} $\cm$.

\subsection{The super Lie algebra $\boldsymbol{\cx(\cm)}$}
\label{sect:TheSuperLieAlgebraOfSuperVectorFields}

The ringed space picture also provides a very accessible way to deal
with the Lie superalgebra $\cx(\cm)$ of vector fields. By Lemma
\ref{lem:superSectionsAreSuperRepresenatble}, the functor
$\wh{\Gamma}(\cm,\ct\cm)$ is superrepresentable and the ringed space
picture provides explicitly a $\Z_{2}$-graded vector space
representing $\cx(\cm)$ as in Example
\ref{ex:superrepresentableOlrModlue}.

The structure sheaf $C^{\infty}_{\cm}$ is a sheaf of super commutative
$\Z_{2}$-graded algebras on $M$. Thus it has a $\Z_{2}$-graded sheaf of
(even and odd) derivations, which we denote by $\Der(C^{\infty}_{\cm})$.
In local coordinates $x_i,\theta_j$, an even derivation
has the general form
\begin{equation}\label{eqn:evenDerivationInLocalCoordinates}
 X=\sum_{i=1}^m\sum_{\substack{I\subseteq\{1,\ldots,n\}\\
 |I|\,\,\mathrm{even}}}f_{iI}\theta_I\pder{}{x_i}+
 \sum_{j=1}^n\sum_{\substack{J\subseteq\{1,\ldots,n\}\\
 |J|\,\,\mathrm{odd}}} g_{jJ}\theta_J\pder{}{\theta_j}
\end{equation}
and an odd derivation has the general form
\begin{equation}\label{eqn:oddDerivationInLocalCoordinates}
 X=\sum_{i=1}^m\sum_{\substack{I\subseteq\{1,\ldots,n\}\\
 |I|\,\,\mathrm{odd}}}f_{iI}\theta_I\pder{}{x_i}+
 \sum_{j=1}^n\sum_{\substack{J\subseteq\{1,\ldots,n\}\\
 |J|\,\,\mathrm{even}}} g_{jJ}\theta_J\pder{}{\theta_j}
\end{equation}
where the sums run over all increasingly ordered subsets and $\theta_I$
denotes the product of the corresponding $\theta_j$'s in that same
order. The action of $X$ on
$f=\sum_{K\se \{1,\ldots,n\}}f_{K}\theta_{K}\quad\in C^{\infty}_{\cm}$ is then given by
\begin{eqnarray}\label{eqn:actionOfDerivationInLocalCoordinates1}
 \frac{\partial}{\partial x^{i}} f &:=& \sum_{K\se \{1,\ldots,n\}}
 \frac{\partial f_{K}}{\partial x^{i}} \theta_{K}\\
\label{eqn:actionOfDerivationInLocalCoordinates2}
 \frac{\partial}{\partial \theta_{j}} f &:=& \sum_{j\in K\se
 \{1,\ldots,n\}} f_{K}\theta_{K-\{j\}}\mathrm{sgn}(j,K)
\end{eqnarray}
where $\mathrm{sgn}(j,K)$ is the sign arising from moving $\frac{\partial}{\partial \theta_{j}}$
past the elements left of $\theta_j$ in $\theta_K$. 
The super commutator
$[X,Y]=XY-(-1)^{|X||Y|}YX$ turns $\Der(C^{\infty}_{\cm})$
into a Lie superalgebra.

From the above representation it also follows that 
$\Der(C^{\infty}_{\cm})$ can be endowed with a Fr\'echet topology, which
is induced by the embedding
\begin{equation*}
 \Der(C^{\infty}_{\cm})\hookrightarrow 
  \prod_{i\in I}\Der(\left.\cm\right|_{U_{i}})
  \cong \Der(C^{\infty}(U_{i})\otimes \Lambda_{n})
\end{equation*}
(for $(U_{i})_{i\in I}$ an open covering of $M$ with
$\left.\cm\right|_{U_{i}}\cong C^{\infty}(V_{i})\otimes \Lambda_{n}$
and $V_{i}\se \R^{n}$ open) and endowing 
$\Der(C^{\infty}(U_{i})\otimes \Lambda_{n})$ with the natural Fr\'echet
topology. Since the natural operations are continuous with respect to
this topology, this turns $\Der(\cm)$ into a Fr\'echet super
Lie algebra. 

From the local representation of a derivation in
\eqref{eqn:evenDerivationInLocalCoordinates} and
\eqref{eqn:oddDerivationInLocalCoordinates} and Lemma
\ref{lem:superSectionsAreSuperRepresenatble} it also follows that
$\ol{\Der(C^{\infty}_{\cm})}\cong \cx(\cm)$ as $\olr$-modules,
which enriches $\cx(\cm)$ to a Fr\'echet super Lie algebra.

\subsection{The structure of $\boldsymbol{\cx(\cm)}$ and
$\boldsymbol{\Aut(\cm)}$}

An automorphism of $\cm$ is a homomorphism of its structure sheaf,
i.e., it preserves the grading.
The Lie algebra of $\Aut(\cm)$ is therefore the algebra of grading-preserving, i.e., even, vector fields
$\cx(\cm)_{\bar{0}}$.

In view of the action of vector fields on functions on $\cm$ described in
\eqref{eqn:actionOfDerivationInLocalCoordinates1}
and \eqref{eqn:actionOfDerivationInLocalCoordinates2} we readily identify the
even vector fields whose action induces the identity on the underlying manifold:
these are the ones which do not contain a summand $f_i(x)\frac{\partial}{\partial x_i}$.
That is, in their local representation each coefficient function is at least of degree one
in the odd variables.
Similarly, if an even vector field $X$ only has coefficient functions of degree $\geq 2$ in the odd
variables it will induce the identity on $C^\infty_\cm/\cj^2$ and thus on the underlying manifold 
as well as on any vector bundle describing $\cj/\cj^2$ and so on.

We can define a filtration on $\cx(\cm)$ analogous to that on $C^\infty_\cm$ by giving each odd coordinate
(in some arbitrary local coordinate system) degree $1$ and each derivative $\frac{\partial}{\partial\theta_j}$
degree $-1$. Then we define
$\cx(\cm)^{(k)}$ as the ideal in $\cx(\cm)$ consisting of even vector fields whose
local coordinate representations are of degree at least $k$ (in the 
odd variables). This defines a filtration which is independent of the choice of local coordinates:
the exact number of odd variables in a superfunction is not preserved under coordinate
changes, but it never decreases, which is precisely the statement that coordinate changes
respect the filtration of $C^\infty_\cm$ by powers of the nilpotent ideal $\cj$.

In particular, $\evf$ consists of all $\cx(\cm)^{(k)}$ with even $k$, the odd vector fields have
odd degrees.
So, for example, $\evf^{(0)}/\evf^{(2)}$ locally consists of linear combinations of vector fields of the form
$f(x)\partial_{x_j}$ and $g(x)\theta_i\partial_{\theta_j}$ and therefore acts nontrivially on the
underlying manifold $M$ as well as on the associated vector bundles of $\cm$.

The subgroup of $\Aut(\cm)$ which induces the identity on $C^\infty_\cm/\cj^k$ will be denoted as
$\op{Nil}_{\cm}^{(k)}$. If we write sloppily $\cm/\cj^k$ for the ringed space obtained by
dividing out the $k$-th power of $\cj$ then we could also define $\op{Nil}_{\cm}^{(k)}$ as the
kernel of the natural map $\Aut(\cm)\to\Aut(\cm/\cj^k)$.


\begin{prop}\label{prop:exponentialFunctionOnNilpotentPart}
 If $k\geq 2$, then $\evf^{(k)}$ consists of nilpotent derivations, $\op{Nil}_{\cm}^{(k)}$ 
 of unipotent automorphisms (of $C^\infty_\cm$ respectively), and the exponential map
 \[
  \exp:\evf^{(k)}\to\mathrm{Nil}_{\cm}^{(k)}
 \]
 is bijective.
\end{prop}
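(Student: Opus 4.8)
The plan is to work entirely in the ringed-space picture of the preceding subsections, where $\cm$ is the $m|n$-dimensional ringed space $(M,C^\infty_\cm)$ with nilpotent ideal sheaf $\cj$, and to exploit the single structural fact that $\cj^{\,n+1}=0$ (locally $\cj$ is generated by the $n$ odd coordinates, so any product of $n+1$ of them vanishes). Every assertion will be reduced to the statement that, on a supercommutative algebra with nilpotent augmentation ideal, $\exp$ and $\log$ are mutually inverse bijections between the even derivations that strictly raise the $\cj$-adic filtration and the unipotent grading-preserving automorphisms. Because the nilpotency bound $n+1$ is uniform over $M$, all manipulations are local-to-global and purely algebraic, so no convergence issues arise.

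First I would dispose of the two nilpotency statements, which make $\exp$ and $\log$ well defined as \emph{finite} sums. For $X\in\evf^{(k)}$ with $k\ge2$, the local form of an even derivation whose coefficient functions lie in $\cj^{\,k}$ shows $X(\cj^{\,p})\subseteq\cj^{\,p+1}$ for every $p$, so $X^{m}(C^\infty_\cm)\subseteq\cj^{\,m}$, which vanishes once $m\ge n+1$; hence $X$ is nilpotent and $\exp(X)=\sum_{m\ge0}X^{m}/m!$ is a finite sum. As the exponential of an even derivation it is a grading-preserving algebra automorphism, and since $X(C^\infty_\cm)\subseteq\cj^{\,k}$ we get $(\exp(X)-\id)(C^\infty_\cm)\subseteq\cj^{\,k}$, so $\exp(X)\in\op{Nil}_{\cm}^{(k)}$ and $\exp$ maps $\evf^{(k)}$ into $\op{Nil}_{\cm}^{(k)}$. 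Dually, for $\phi\in\op{Nil}_{\cm}^{(k)}$ set $N:=\phi^{*}-\id$; the defining condition $N(x_i),N(\theta_j)\in\cj^{\,k}$ together with the twisted Leibniz rule $N(fg)=N(f)g+fN(g)+N(f)N(g)$ gives, by a short induction, $N(\cj^{\,p})\subseteq\cj^{\,p+1}$, whence $N^{\,n+1}=0$, $\phi^{*}=\id+N$ is unipotent, and $\log(\phi^{*})=\sum_{m\ge1}(-1)^{m+1}N^{m}/m$ is again a finite sum.

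The heart of the argument is to show that $D:=\log(\phi^{*})$ is a \emph{derivation}, not merely a filtration-raising linear endomorphism, and that it lies in $\evf^{(k)}$. For the derivation property I would use the comultiplication trick: writing $\mu\from C^\infty_\cm\otimes C^\infty_\cm\to C^\infty_\cm$ for multiplication, the automorphism identity reads $\phi^{*}\circ\mu=\mu\circ(\phi^{*}\otimes\phi^{*})$. Both $\phi^{*}$ and $\phi^{*}\otimes\phi^{*}$ are unipotent, and since $\log$ is a polynomial in $N$ on such finite-step-nilpotent operators, the intertwining relation passes to the logarithms: $D\circ\mu=\mu\circ\log(\phi^{*}\otimes\phi^{*})$. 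Because $\phi^{*}$ is \emph{even}, the commuting factors $\phi^{*}\otimes\id$ and $\id\otimes\phi^{*}$ give $\log(\phi^{*}\otimes\phi^{*})=D\otimes\id+\id\otimes D$ with no Koszul signs, and substituting this yields exactly the Leibniz rule $D\circ\mu=\mu\circ(D\otimes\id+\id\otimes D)$. Hence $D$ is an even derivation, and a glance at the filtration (the coefficient functions $D(x_i),D(\theta_j)$ agree with $N(x_i),N(\theta_j)$ modulo $\cj^{\,k+1}$, so they lie in $\cj^{\,k}$) places $D$ in $\evf^{(k)}$. Finally the formal power-series identities $\exp(\log(\id+N))=\id+N$ and $\log(\exp X)=X$ hold as genuine operator identities here because all series terminate, so $\exp$ and $\log$ are mutually inverse, proving the bijection.

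I expect the derivation property of $\log(\phi^{*})$ to be the main obstacle. Filtration-raising and the formal inverse identities are routine once $\cj^{\,n+1}=0$ is in hand, but one must argue carefully that the logarithm of a unipotent algebra automorphism is again a derivation; the comultiplication argument above is the cleanest route, and the only point demanding care is the repeated use of the evenness of $\phi^{*}$ to ensure that the graded tensor product introduces no signs when $\log(\phi^{*}\otimes\phi^{*})$ is split into $D\otimes\id+\id\otimes D$.
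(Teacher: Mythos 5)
Your argument is correct in the case the paper actually needs ($k$ even, in particular $k=2$) and follows the same overall strategy as the paper's proof: both rest on the observation that elements of $\evf^{(k)}$, respectively $N=\phi^{*}-\id$ for $\phi\in\op{Nil}_{\cm}^{(k)}$, strictly raise the $\cj$-adic filtration, so that $\exp$ and $\log$ terminate, and are then identified as mutually inverse. The genuine difference is where the substance of the inverse direction lives. The paper disposes of it in one line by citing van den Essen for ``$\log$ defines an inverse map for $\exp$'', which silently includes the nontrivial fact that the logarithm of a unipotent algebra automorphism is again a derivation. You prove exactly this by hand, via the intertwining relation $\phi^{*}\circ\mu=\mu\circ(\phi^{*}\otimes\phi^{*})$, the observation that intertwining passes from unipotents to their logarithms (since $N_{A}^{l}\circ\mu=\mu\circ N_{B}^{l}$ for all $l$), and the splitting $\log(\phi^{*}\otimes\phi^{*})=D\otimes\id+\id\otimes D$ for the commuting even factors. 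This makes your treatment of bijectivity more self-contained than the paper's; the nilpotence and unipotence parts are essentially identical (the paper phrases the filtration-raising in terms of the index lengths $|I|,|J|$ in the coordinate expressions rather than $\cj$-adically, but it is the same computation, and both uses of $k\geq 2$ occur at the same spot, namely in showing $N(\cj^{p})\subseteq\cj^{p+1}$).

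One step in your write-up deserves more care, and it is a point the paper's own proof glosses over entirely: the inference from $D(x_i),D(\theta_j)\in\cj^{k}$ to $D\in\evf^{(k)}$. By the paper's grading conventions ($\partial/\partial\theta_j$ carries degree $-1$, so membership in $\evf^{(k)}$ forces the coefficient of $\partial/\partial\theta_j$ to have odd degree at least $k+1$), you need $D(\theta_j)\in\cj^{k+1}$, not merely $\cj^{k}$. For even $k$ this is rescued by parity: $D$ is grading-preserving, hence $D(\theta_j)$ is odd, and an odd element of $\cj^{k}$ with $k$ even automatically lies in $\cj^{k+1}$. For odd $k$ the inference genuinely fails, and so does the proposition as stated: on $\realR^{1|3}$ the even derivation $D=\theta_1\theta_2\theta_3\,\partial/\partial\theta_1$ has filtration degree $2$, yet $\exp(D)=\id+D$ lies in $\op{Nil}_{\cm}^{(3)}$, so $\exp\from\evf^{(3)}\to\op{Nil}_{\cm}^{(3)}$ is not surjective (injectivity of $\log$ rules out any other preimage). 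Since $\evf^{(2j-1)}=\evf^{(2j)}$ and the paper only ever uses $k=2$, this is harmless for the rest of the article, but you should add the one-line parity argument and read the bijectivity claim for even $k$ only.
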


\begin{proof}
 An element $X$ of $\evf^{(k)}$ is an even derivation of $C^{\infty}_{\cm}$
 such that $|I|$ and $|J|$ in its coordinate representation
 \eqref{eqn:evenDerivationInLocalCoordinates} are bounded below by $k$ and $k+1$, respectively.
 With the definition of the action of $\evf$ on
 $C^{\infty}_{\cm}$ in \eqref{eqn:actionOfDerivationInLocalCoordinates1}
 and \eqref{eqn:actionOfDerivationInLocalCoordinates2}
 one sees that applying $X$ to $f$ raises the length of the indices of
 the odd variables $\theta_{K}$ by at least $k$. From this it follows that $X$
 acts nilpotently if $k\geq 2$.

 Thus $\exp(X)$ actually is a finite sum and the exponential map is
 well-defined. Moreover, $\exp(X)\in \op{Nil}_{\cm}^{(k)}$, since in any local coordinate
 system,
 $\cj^{k}$ is generated by $\{\theta_{K}:|K|\leq k\}$ over $C^\infty/\cj$ and
 thus $\exp(X)$ acts trivially on $\cm/\cj^{k}$. By the same argument
 as above, an element $\varphi\in\op{Nil}_{\cm}^{(k)}$ is unipotent if
 $k\geq 2$. Moreover,
 \begin{equation*}
  \log(\varphi):=\sum_{l\geq 1} (-1)^{l}\frac{(\varphi -\id)^{l}}{l}
 \end{equation*}
 defines an inverse map for $\exp$ (cf.\
 \cite{Essen00Polynomial-automorphisms-and-the-Jacobian-conjecture}).
\end{proof}


The group $\op{Nil}_{\cm}:=\op{Nil}(\cm)^{(2)}$ will be particularly
important, for it can be turned into a semidirect factor in $\Aut(\cm)$, albeit non-canonically. The
corresponding quotient is $G:=\Aut(\cm/\cj^{2})$, which we can embed
into $\Aut(\cm)$ by choosing a vector bundle $E\to M=\cm(\realR)$
associated with $\cm$. In fact, $\cj^{2}=\Lambda^{\geq 2}E$
in the case that $C^\infty_\cm=\Lambda_{C^\infty_M}^{\bullet}E$ and thus automorphisms of
$\cm/\cj^{2}=E$ become the same as vector bundle automorphisms
of $E$. On the other hand, each $f\in \Aut(E)$ acts as an automorphism
on the sheaf of sections of $E$ and this determines uniquely an
automorphism of $\Lambda^{\bullet}E$.

\begin{cor}\label{cor:splitOfAutomorphismGroup}
 The group $\mathrm{Nil}_\cm$ fits into an exact sequence
 \begin{equation}
  \label{eseq}
  1\longrightarrow\mathrm{Nil}_\cm\longrightarrow\Aut(\cm)\longrightarrow G\longrightarrow 1.
 \end{equation}
 This sequence splits (non-naturally in $\cm$) by a morphism
 $\sigma_{E}\from G\rightarrow \Aut(\cm)$, which depends on a choice of a vector
 bundle $E$ associated with $\cm$ and we have
 $\Aut(\cm)\cong \op{Nil}_{\cm}\rtimes_{E} G$.
\end{cor}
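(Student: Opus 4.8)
The plan is to read off most of the exact sequence directly from the definitions and to concentrate the actual work on constructing the splitting $\sigma_E$, from which surjectivity of the reduction map will follow for free. First I would set up the reduction homomorphism. Since an automorphism of $\cm$ is by definition a $\Z_2$-graded algebra automorphism of $C^\infty_\cm$, and any such automorphism preserves the nilpotent ideal $\cj$ and hence each power $\cj^k$, it descends to an automorphism of the quotient ringed space $\cm/\cj^2$. This gives a group homomorphism $r\from\Aut(\cm)\to\Aut(\cm/\cj^2)=G$. By the very definition of $\op{Nil}_\cm=\op{Nil}_\cm^{(2)}$ as the subgroup inducing the identity on $C^\infty_\cm/\cj^2$, we have $\ker r=\op{Nil}_\cm$. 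Thus exactness at $\op{Nil}_\cm$ (injectivity of the inclusion) and at $\Aut(\cm)$ (namely $\im(\text{incl})=\ker r$) is immediate, and $\op{Nil}_\cm$ is normal, being a kernel. What remains is surjectivity of $r$, i.e.\ exactness at $G$.

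Next I would construct the section. Fix a vector bundle $(E,\xi)$ associated with $\cm$, so that $\xi\from\Gamma(\Lambda^\bullet E)\xrightarrow{\sim}C^\infty_\cm$ is a $\Z_2$-graded algebra isomorphism covering $\id_M$; since it identifies the two nilpotent ideals it carries $\Lambda^{\geq k}E$ onto $\cj^k$, and in particular exhibits $\cm/\cj^2$ as the order-one split model $E$, whose automorphisms are exactly the vector bundle automorphisms of $E$. For $g\in G=\Aut(E)$ I would let $\Lambda^\bullet g$ denote the automorphism of the exterior algebra bundle induced by the action of $g$ on sections of $E$ extended multiplicatively, and set
\[
\sigma_E(g):=\xi\circ(\Lambda^\bullet g)\circ\xi^{-1}\in\Aut(\cm).
\]
Because $\Lambda^\bullet g$ is a $\Z_2$-graded algebra automorphism of $\Gamma(\Lambda^\bullet E)$, the conjugate $\sigma_E(g)$ is one of $C^\infty_\cm$, hence genuinely lies in $\Aut(\cm)$. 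Functoriality of the exterior power, $\Lambda^\bullet(gh)=(\Lambda^\bullet g)(\Lambda^\bullet h)$, makes $\sigma_E$ a group homomorphism (the inner $\xi^{-1}\xi$ cancels, so this works even when $g$ covers a nontrivial base diffeomorphism). Finally, reducing modulo $\cj^2=\xi(\Lambda^{\geq 2}E)$ kills all summands of degree $\geq 2$ and leaves the action of $g$ on $\Lambda^{\leq 1}E=C^\infty_M\oplus\Gamma(E)$, which is precisely $g$ under the identification $\cm/\cj^2=E$; thus $r\circ\sigma_E=\id_G$. This section forces $r$ to be surjective, completing exactness, and a split short exact sequence of groups yields $\Aut(\cm)\cong\op{Nil}_\cm\rtimes_E G$, the $G$-action on $\op{Nil}_\cm$ being conjugation through $\sigma_E$; the subscript $E$ records that this action — and the splitting itself — depends on the non-canonical choice of $(E,\xi)$ furnished by Batchelor's theorem, which is the source of the asserted non-naturality in $\cm$.

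The routine verifications (that $r$ is a homomorphism, that $\sigma_E$ is one, and that $\xi$ respects the filtration) are mechanical once the ringed-space dictionary is in place. The point requiring the most care is the identification $G=\Aut(\cm/\cj^2)\cong\Aut(E)$: one must check that an algebra automorphism of $C^\infty_\cm/\cj^2=C^\infty_M\oplus\Gamma(E)$, with $\Gamma(E)$ of square zero, is the same datum as a diffeomorphism of $M$ together with a compatible semilinear automorphism of $\Gamma(E)$, i.e.\ a vector bundle automorphism of $E$ possibly covering a nontrivial base diffeomorphism, and then that $\Lambda^\bullet g$ correctly lifts such a $g$ including its base component. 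Once this dictionary is secured, the corollary is essentially a formal consequence of Proposition~\ref{prop:exponentialFunctionOnNilpotentPart} together with Batchelor's theorem.
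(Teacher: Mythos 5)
Your proposal is correct and follows essentially the same route as the paper: the paper (in the discussion preceding the corollary and in its definition of $\op{Nil}_{\cm}^{(k)}$ as the kernel of $\Aut(\cm)\to\Aut(\cm/\cj^{k})$) also takes the reduction map with kernel $\op{Nil}_\cm$ and splits it by choosing a Batchelor bundle $(E,\xi)$, letting $f\in\Aut(E)$ act on sections of $E$ and extending uniquely to an automorphism of $\Lambda^{\bullet}E$, which is exactly your $\sigma_E(g)=\xi\circ(\Lambda^{\bullet}g)\circ\xi^{-1}$. You merely make explicit the verifications (surjectivity via the section, $r\circ\sigma_E=\id_G$, the identification $\Aut(\cm/\cj^2)\cong\Aut(E)$ including nontrivial base diffeomorphisms) that the paper leaves implicit.
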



%
%

\subsection{$\boldsymbol{\Aut(\cm)}$ as a Fr\'echet--Lie group}\label{sect:AutAsAFrechetLieGroup}

Just as the diffeomorphism group of a compact manifold is modeled on the
Lie algebra of smooth vector fields, we will model $\sdiff(\cm)$ on the
superrepresentable $\olr$-module $\cx(\cm)$ of super vector fields on
$\cm$. Consequently, we are seeking for a Lie group structure on
$\Aut(\cm)=\sdiff(\cm)(\Lambda_{0})$, which is modeled on
$\aut(\cm):=\wh{\Gamma}(\cx(\cm))(\Lambda_{0})=\evf$ (cf.\ Section
\ref{sect:superVectorBundles}). Pulling back a chart for this Lie group
structure along the terminal morphism 
$\epsilon_{\Lambda}\from\Lambda\to \Lambda_{0}$ then provides us with
charts for a Lie group structure on each $\sdiff(\Lambda)$. Since this
construction is functorial we will end up with a super Lie group structure
on $\sdiff$.

For the following construction we choose a vector bundle $(E,\xi)$
associated with $\cm$ as in Section
\ref{sect:theStructureSheafOfASuperManifold} and note that for a
different choice $(E,\xi')$ we have $\xi=\gamma\circ \xi'$ for an
automorphism $\gamma\from \Lambda^{\bullet} E \to
\Lambda^{\bullet} E$. We shall use $\gamma$ later on to show that
the smooth structure on $\Aut(\cm)$ does not depend on the choice of
$\xi$. We use $\xi$ to identify $\Aut(\cm)$ with $\Aut(\Lambda^{\bullet}
E)$, where the latter group denotes fiberwise algebra automorphisms
preserving the $\Z_{2}$-grading. Then Corollary
\ref{cor:splitOfAutomorphismGroup} yields the semidirect decomposition
\begin{equation*} 
\Aut(\Lambda^{\bullet }E)\cong \Aut(\Lambda^{\geq
2}E)\rtimes \Aut(E) 
\end{equation*} with respect to the natural action
of $\Aut(E)$ on $\Lambda^{\geq 2}E$. Now Proposition
\ref{prop:exponentialFunctionOnNilpotentPart} yields a bijective
exponential function 
\begin{equation*} 
\exp\from\aut(\Lambda^{\geq
2}E)\to\Aut(\Lambda^{\geq 2}E), 
\end{equation*} 
where
$\aut(\Lambda^{\geq 2}E)$ denotes the even derivations of
$\Lambda^{\geq 2}E$. We have seen in Section
\ref{sect:TheSuperLieAlgebraOfSuperVectorFields} how to put on
$\aut(\Lambda^{\geq 2}E)$ the structure of a Fr\'echet algebra and the
induced smooth structure on $\Aut(\Lambda^{\geq 2}E)$ turns it into a
Fr\'echet-Lie group. It thus remains to put a smooth structure on
$\Aut(E)$ and to show that the induced action is smooth.

\begin{thm}
 If $E\to M$ is a finite-dimensional vector bundle over the compact
 manifold $M$, then $\Aut(E)$ can be given the structure of a
 Fr\'echet--Lie group, modelled on the Fr\'echet space
 \begin{equation*}
  \gau(E)\oplus \cV(M),
 \end{equation*}
 where $\gau(E)$ denotes the Lie algebra of sections in the endomorphism
 bundle $\op{end}(E)$ and $\cV(M)$ the Lie algebra of vector fields on
 $M$, both endowed with the natural $C^{\infty}$-topology.
\end{thm}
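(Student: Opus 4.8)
The plan is to identify $\Aut(E)$ with the automorphism group of the associated principal frame bundle and to import the Fr\'echet--Lie group structure from \cite{Wockel07Lie-group-structures-on-symmetry-groups-of-principal-bundles}. Writing $k=\rank E$ and letting $P=\mathrm{Fr}(E)\to M$ be the $\GL(k,\R)$-bundle of frames, a vector bundle automorphism of $E$ is the same datum as a $\GL(k,\R)$-equivariant automorphism of $P$, so $\Aut(E)\cong\Aut(P)$ canonically. Under this isomorphism the gauge algebra $\gau(P)=\Gamma(\Ad P)$ corresponds to $\Gamma(\op{end}(E))=\gau(E)$, since the adjoint bundle $\Ad P=P\times_{\GL(k,\R)}\gl(k,\R)$ is exactly $\op{end}(E)$. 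Thus it suffices to equip $\Aut(P)$ with a Fr\'echet--Lie structure modeled on $\gau(P)\oplus\cV(M)$, which for compact $M$ is the main theorem of \cite{Wockel07Lie-group-structures-on-symmetry-groups-of-principal-bundles}.

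To expose the mechanism behind that structure, and thereby locate the analytic difficulty, I would build charts directly from the extension
\[
1\longrightarrow\Gau(E)\longrightarrow\Aut(E)\xrightarrow{q}\Diff(M)_{E}\longrightarrow 1,
\]
where $\Gau(E)$ consists of the vertical automorphisms covering $\id_{M}$ and $\Diff(M)_{E}:=\im(q)$ is the set of base diffeomorphisms that admit a lift. The group $\Gau(E)=\Gamma(\GL(E))$ is a Fr\'echet--Lie group modeled on $\gau(E)$, with canonical chart the fibrewise exponential $\xi\mapsto\exp\circ\,\xi$; and $\Diff(M)_{E}$ is open in the Fr\'echet--Lie group $\Diff(M)$ of the compact manifold $M$ --- openness because $f\mapsto[f^{*}E]$ is locally constant --- hence a Fr\'echet--Lie group modeled on $\cV(M)$.

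Next I would split the extension locally as a manifold. Fixing a linear connection on $E$ and a Riemannian metric on $M$, for $f$ near $\id_{M}$ the points $x$ and $f(x)$ are joined by a unique short geodesic, and parallel transport along it gives a linear isomorphism $E_{x}\to E_{f(x)}$, that is, a lift $\sigma(f)\in\Aut(E)$. The assignment $f\mapsto\sigma(f)$ is a smooth local section of $q$, and
\[
\gau(E)\oplus\cV(M)\longrightarrow\Aut(E),\qquad (\xi,X)\longmapsto(\exp\circ\,\xi)\cdot\sigma\bigl(\exp_{\Diff}(X)\bigr)
\]
is then a chart around the identity whose left translates form an atlas. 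By the extension criterion for locally convex Lie groups (cf.\ \cite{Neeb06Towards-a-Lie-theory-of-locally-convex-groups}) the remaining task, smoothness of multiplication and inversion, reduces to smoothness of the conjugation action of $\Diff(M)_{E}$ on $\Gau(E)$, namely of the pushforward $(f,\psi)\mapsto\sigma(f)\,\psi\,\sigma(f)^{-1}$, together with smoothness of the cocycle measuring the failure of $\sigma$ to be a homomorphism.

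The hard part is exactly this last point. In a Fr\'echet setting composition of smooth maps is not automatically smooth, so the smoothness of the pushforward action of diffeomorphisms on sections of $\op{end}(E)$ is a genuine analytic theorem rather than a formality; here compactness of $M$ and finite-dimensionality of the fibres are essential, and this is precisely the statement furnished by \cite{Wockel07Lie-group-structures-on-symmetry-groups-of-principal-bundles}, resting on the smoothness and exponential-law results of \cite{Neeb06Towards-a-Lie-theory-of-locally-convex-groups}. Granting it, all structure maps are smooth in the charts above, and $\Aut(E)$ is a Fr\'echet--Lie group modeled on $\gau(E)\oplus\cV(M)$.
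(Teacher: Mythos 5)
Your proposal follows exactly the paper's own argument: pass to the frame bundle $F_E$ (a principal $\GL(k,\R)$-bundle), invoke the main theorem of \cite{Wockel07Lie-group-structures-on-symmetry-groups-of-principal-bundles} for compact base, and transport the structure through the canonical isomorphism $\Aut(F_E)\cong\Aut(E)$, with the identification $\gau(F_E)\cong\gau(E)$. The additional paragraphs sketching the extension $1\to\Gau(E)\to\Aut(E)\to\Diff(M)_E\to 1$ and the parallel-transport section are a correct exposition of the mechanism inside Wockel's proof, but they do not change the logical structure, since the hard analytic step is still delegated to that reference.
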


\begin{proof}
 Since $E$ is finite-dimensional its frame bundle $F_{E}$ is so.
 The latter is a principal $\GL(V)$-bundle, where $V$ denotes
 the typical fiber of $E$ and the construction from
 \cite{Wockel07Lie-group-structures-on-symmetry-groups-of-%
 principal-bundles} yields a smooth structure on $\Aut(F_{E})$,
 modeled on $\gau(E)\oplus \cV(M)$. Using the canonical
 isomorphism $\Aut(F_{E})\cong \Aut(E)$ then induces a smooth structure
 on $\Aut(E)$.
\end{proof}

Note that the Lie algebra $\aut(E)$ of $\Aut(E)$ is only isomorphic to
$\gau(E)\oplus \cV(M)$ as a vector space but not as a Lie algebra.
In general, one only has an extension
\begin{equation*}
 0\to\gau(E)\to\aut(E)\to\cV(M)\to 0
\end{equation*}
of Fr\'echet--Lie algebras, which does \emph{not} split.
Moreover, charts for the smooth structure are not very handsome for in general they
cannot come from an exponential function. However, restricting to the
normal subalgebra $\gau(E)\trianglelefteq \aut(E)$ of sections in the endomorphism bundle, we have an exponential funtion
\begin{equation*}
 \exp\from \gau(E)\to \Gau(E),
\end{equation*}
where $\Gau(E)$ denotes the group of vertical bundle automorphisms of
$E$. This exponential function is given by taking the exponential 
function $\mathrm{End}(V)\to \GL(V)$ in each fiber and may be used to obtain 
a chart for the normal subgroup $\Gau(E)$ (cf.\ \cite[Th.\
1.11]{Wockel07Lie-group-structures-on-symmetry-groups-of%
-principal-bundles}). The inconvenience in the construction of a chart
on $\Aut(E)$ now comes from extending the chart
on $\Gau(E)$ to $\Aut(E)$, which mainly involves the construction of a
chart of $\Diff(M)$ on $\cV(M)$ (cf.\ \cite[Sect.\
2]{Wockel07Lie-group-structures-on-symmetry%
-groups-of-principal-bundles}).

\begin{cor}
 If $\cm$ is a finite-dimensional supermanifold such that the underlying
 manifold $M$ is compact, then $\Aut(\cm)$ carries the structure of
 a Fr\'echet--Lie group. If $(E,\xi)$ is a vector bundle associated
 with $\cm$, then $\Aut(\cm)$ is modeled on
 \begin{equation*}
 \aut(\Lambda^{\geq 2}E)\oplus\gau(E)\oplus \cV(M).
 \end{equation*} 
\end{cor}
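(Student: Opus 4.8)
The plan is to build the Fr\'echet--Lie group structure on $\Aut(\cm)$ out of the two pieces supplied by Corollary \ref{cor:splitOfAutomorphismGroup}. Fixing the associated bundle $(E,\xi)$, the isomorphism $\xi$ identifies $\Aut(\cm)$ with $\Aut(\Lambda^\bullet E)$, and Corollary \ref{cor:splitOfAutomorphismGroup} gives the semidirect decomposition
\[
\Aut(\cm)\cong\Aut(\Lambda^{\geq 2}E)\rtimes\Aut(E).
\]
The preceding theorem endows $\Aut(E)$ with a Fr\'echet--Lie group structure modeled on $\gau(E)\oplus\cV(M)$, using the compactness of $M$. For the normal factor $\Aut(\Lambda^{\geq 2}E)=\op{Nil}_\cm$, Proposition \ref{prop:exponentialFunctionOnNilpotentPart} provides a bijective exponential map $\exp\from\aut(\Lambda^{\geq 2}E)\to\Aut(\Lambda^{\geq 2}E)$; I would take its inverse $\log$ as a global chart. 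Because every element of $\aut(\Lambda^{\geq 2}E)$ acts nilpotently, both the group multiplication (expressed through the terminating Baker--Campbell--Hausdorff series) and inversion become polynomial in the chart, hence smooth, so $\Aut(\Lambda^{\geq 2}E)$ is a Fr\'echet--Lie group modeled on the Fr\'echet algebra $\aut(\Lambda^{\geq 2}E)$ introduced in Section \ref{sect:TheSuperLieAlgebraOfSuperVectorFields}.

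Next I would assemble the two factors. The standard result on locally convex Lie groups (cf.\ \cite{Neeb06Towards-a-Lie-theory-of-locally-convex-groups}) states that if a Fr\'echet--Lie group $Q$ acts smoothly by automorphisms on a Fr\'echet--Lie group $N$, then $N\rtimes Q$ is again a Fr\'echet--Lie group, with model space the direct sum of the two model spaces. Applying this with $N=\Aut(\Lambda^{\geq 2}E)$ and $Q=\Aut(E)$ yields a chart on $\Aut(\cm)$ built from $\log$ on the nilpotent factor and a chart of $\Aut(E)$ on $\gau(E)\oplus\cV(M)$; the model space is then exactly
\[
\aut(\Lambda^{\geq 2}E)\oplus\gau(E)\oplus\cV(M),
\]
as claimed.

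The main obstacle is verifying the hypothesis of that result, namely that the action $\Aut(E)\times\Aut(\Lambda^{\geq 2}E)\to\Aut(\Lambda^{\geq 2}E)$ defining the semidirect product is smooth. In the $\log$-chart this action is the pushforward of derivations $(g,X)\mapsto g\circ X\circ g^{-1}$, and I would prove its smoothness by localizing over a finite cover trivializing $E$ and using that, over the compact base $M$, the operation is built from fiberwise composition in the finite-dimensional algebras $\Lambda^\bullet V$ together with the smooth dependence of $g$ and $g^{-1}$ on the $\Aut(E)$-chart. Since each $X\in\aut(\Lambda^{\geq 2}E)$ raises odd degree, the conjugate is again a nilpotent even derivation, so the map lands in the correct space and is smooth by the usual arguments for mapping groups on compact manifolds.

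Finally, I would check independence of the auxiliary data $(E,\xi)$. A second choice $(E,\xi')$ satisfies $\xi=\gamma\circ\xi'$ for an automorphism $\gamma$ of $\Lambda^\bullet E$ covering $\id_M$, and conjugation by $\gamma$ intertwines the two semidirect decompositions. Since $\gamma$ is itself an element of $\Aut(\cm)$, this conjugation is a smooth isomorphism in the respective charts, so the two Fr\'echet--Lie group structures coincide and the construction is canonical.
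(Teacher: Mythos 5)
Your proposal is correct and follows essentially the same route as the paper's proof: the semidirect decomposition from Corollary \ref{cor:splitOfAutomorphismGroup}, the preceding theorem for $\Aut(E)$, the bijective exponential from Proposition \ref{prop:exponentialFunctionOnNilpotentPart} used as a global chart on the nilpotent factor, smoothness of the $\Aut(E)$-action to form the semidirect product Lie group, and transport of the structure along $\xi$ together with the independence check via the comparison automorphism $\gamma$. Your extra details (the terminating Baker--Campbell--Hausdorff series for the nilpotent factor, the localization argument for smoothness of the action) merely flesh out steps the paper treats more briefly.
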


\begin{proof}
 The preceding theorem yields a smooth structure on $\Aut(E)$ and the
 bijective exponential function
 $\exp \from \aut(\Lambda^{\geq 2}E)\to \Aut(\Lambda^{\geq 2}E)$ induces
 a smooth structure on $\Aut(\Lambda^{\geq 2}E)$. The induced action of
 $\Aut(E)$ on $\Aut(\Lambda^{\geq 2}E)$ is smooth, because
 the actions of $\Gau(E)$ on $\gau(E)$ and of
 $\Diff(M)$ on $C^{\infty}(M)$ are smooth, and on a unit
 neighborhood the $\Aut(E)$-action is given (in local coordinates)
 in terms of the $\Gau(E)$ and $\Diff(M)$-actions.
 From this it follows that
 \begin{equation*}
  \Aut(\Lambda^{\bullet}E)\cong \Aut(\Lambda^{\geq 2}E)\rtimes \Aut(E)
 \end{equation*}
 carries a Lie group structure, modeled on
 $\aut(\Lambda^{\geq 2}E)\oplus\gau(E)\oplus \cV(M)$. Now 
 $\xi\from \ul{\Lambda^{\bullet}E}\to \cm$ induces an isomorphism 
 $\Aut(\cm)\to \Aut(\ul{\Lambda^{\bullet}E})\cong  \Aut(\Lambda^{\bullet E})$. 
 Since two different $\xi$ differ by an
 equivalence of $\ul{\Lambda^{\bullet}}E$
 the smooth structure does not depend on this choice if we use $\xi$
 to transport this structure from $\Aut(\Lambda^{\bullet}E)$ to
 $\Aut(\cm)$.
\end{proof}

\subsection{Charts on $\boldsymbol{\sdiff(\cm)}$}

Denote by $\cx(\cm)$ the superrepresentable $\olr$-module of sections of
the tangent bundle of $\cm$. As we have seen, $\cx(\cm)$ is nothing else than the $\olr$-module
associated with the super vector space of vector fields on $\cm$. To equip $\sdiff(\cm)$ with a supersmooth
Lie group structure, modeled on $\cx(\cm)$, we start with an open zero
neighborhood $U\se \aut(\cm)$ and a chart $\Phi\from V\to
U$ for some open unit neighborhood $V$ of $\Aut(\cm)$. This defines an 
open subfunctor
\begin{equation*}
 \ol{U}\from \catgr\to\cattop,\quad 
 \left\{\begin{array}{ll}
  \Lambda\mapsto\cx(\cm)(\epsilon_{\Lambda})^{-1}(U)&
   \!\!\tx{on objects}\\
  \varphi\mapsto\left.
    \cx(\cm)(\varphi)\right|_{\cx(\cm)(\epsilon_{\Lambda})^{-1}(U)}&
   \!\!\tx{on morphisms}
 \end{array}\right.
\end{equation*}
of $\cx(\cm)$(note that each open subfunctor is of this kind, cf.\
\cite[Prop.\
4.8]{Sachse08A-Categorical-Formulation-of-Superalgebra-and%
-Supergeometry}). Likewise, we obtain a subfunctor
\begin{equation}\label{eqn:openSubfunctorOfSdiff}
 \ol{V}\from \catgr\to\catsets,\quad 
 \left\{\begin{array}{ll}
  \Lambda\mapsto\sdiff(\cm)(\epsilon_{\Lambda})^{-1}(V)&
   \!\!\tx{on objects}\\
  \varphi\mapsto\left.
    \sdiff(\cm)(\varphi)\right|_{\sdiff(\cm)(
     \epsilon_{\Lambda})^{-1}(V)}&
   \!\!\tx{on morphisms}
 \end{array}\right.
\end{equation}
of $\sdiff(\cm)$. We now wish to set up a Lie group structure on each
$\sdiff(\cm)(\Lambda)$ such that $\ol{V}$ becomes an open subfunctor
and such that we have a functorial isomorphism
$\Phi_{\Lambda}\from \ol{V}(\Lambda)\to \ol{U}(\Lambda)$ such that each $\Phi_{\Lambda}$
is a chart for the Lie group structure on $\sdiff(\cm)(\Lambda)$.
This then yields a super Lie group structure on $\sdiff$.

As in Lemma
\ref{lem:semidirceProductDecomposition}, the initial and final morphisms
in $\catgr$ furnish $\cx(\cm)$ with a functorial decomposition
$\cx(\cm)(\Lambda)\cong \mf{n}(\cm)(\Lambda)\rtimes \aut(\cm)$, where
$\mf{n}(\cm)(\Lambda):=\ker(\cx(\cm)(\epsilon_{\Lambda}))$ and
$\aut(\cm)=\evf$ is the Lie algebra of $\Aut(\cm)$.
Since $\mf{n}(\cm)(\Lambda)$ is the subspace of $\cx(\cm)(\Lambda)$ consisting of elements proportional
to (products of) odd generators of $\Lambda$, Proposition \ref{sdiff}
yields a bijective exponential function
\begin{equation*}
 \exp_{\Lambda}\from \mf{n}(\cm)(\Lambda)\to \cn(\cm)(\Lambda),\quad
\end{equation*}
which we use to endow each $\cn(\cm)(\Lambda)$ with a smooth
structure. As in Section \ref{sect:AutAsAFrechetLieGroup} one observes
that the $\Aut(\cm)$-action on $\evf$ and $\ovf$ is smooth
and thus that the action of $\Aut(\cm)$ on $\mf{n}(\Lambda)$ is smooth.
Thus $\Aut(\cm)$ also acts smoothly on $\cn(\cm)(\Lambda)$
and therefore each $\cn(\cm)(\Lambda)\rtimes \Aut(\cm)$ becomes
an infinite-dimensional Lie group, modeled on $\cx(\cm)(\Lambda)$. A
chart for this Lie group structure is given by
\begin{equation*}
 \log_{\Lambda}\times \Phi\from \cn(\cm)(\Lambda)\times V\to 
 \mf{n}(\cm)(\Lambda)\times U,
\end{equation*}
where $\log_{\Lambda}$ denotes the inverse map to $\exp_{\Lambda}$.

\begin{prop}
\label{prop:manfr}
 Endowing each $\sdiff(\cm)(\Lambda)$ with the topology just described
 turns $\sdiff(\cm)$ into a functor $\catgr\to \catman_{\op{Fr}}$, where
 $\catman_{\op{Fr}}$ denotes the category of Fr\'echet manifolds.	
\end{prop}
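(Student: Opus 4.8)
The objects part of the claim is already in place: the preceding discussion equips each $\sdiff(\cm)(\Lambda)\cong \cn(\cm)(\Lambda)\rtimes \Aut(\cm)$ with a Fr\'echet--Lie group structure modelled on $\cx(\cm)(\Lambda)$, with the product chart $\log_{\Lambda}\times \Phi$ on $\cn(\cm)(\Lambda)\times V$. The plan is therefore to show that for every morphism $\varphi\from \Lambda\to\Lambda'$ in $\catgr$ the induced map $\sdiff(\cm)(\varphi)$ is smooth, so that $\sdiff(\cm)$ really lands in $\catman_{\op{Fr}}$.

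First I would record how $\sdiff(\cm)(\varphi)$ interacts with the semidirect decomposition of Lemma \ref{lem:semidirceProductDecomposition}. By Proposition \ref{sdhom} it is a group homomorphism, and since $\epsilon_{\Lambda'}\circ\varphi=\epsilon_{\Lambda}$ and $\varphi\circ c_{\Lambda}=c_{\Lambda'}$ it restricts to the identity on the constant factor $\Aut(\cm)$ and to $\cn(\cm)(\varphi):=\sdiff(\cm)(\varphi)\big|_{\cn(\cm)(\Lambda)}$ on the normal factor. In the product coordinates it is thus literally $\cn(\cm)(\varphi)\times\id_{\Aut(\cm)}$, so smoothness is reduced to smoothness of the $\cn$-component read through the exponential charts.

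The crucial step is the intertwining relation
\[
\sdiff(\cm)(\varphi)\circ\exp_{\Lambda}=\exp_{\Lambda'}\circ\,\mf{n}(\cm)(\varphi)
\qquad\text{on } \mf{n}(\cm)(\Lambda),
\]
where $\mf{n}(\cm)(\varphi):=\cx(\cm)(\varphi)\big|_{\mf{n}(\cm)(\Lambda)}$. Writing $X=\sum_{I\neq\emptyset}\tau_IX_I\in\mf{n}(\cm)(\Lambda)$, Theorem \ref{sdiff} gives $\exp_{\Lambda}(X)=\exp(\sum_I\tau_IX_I)$ since the underlying automorphism of an element of $\cn(\cm)(\Lambda)$ is the identity. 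On algebra homomorphisms $\sdiff(\cm)(\varphi)$ acts as postcomposition with $\id\otimes\varphi$; because $\varphi$ is a homomorphism of Grassmann algebras, $\id\otimes\varphi$ intertwines the $\Lambda$-linear derivation $\sum_I\tau_IX_I$ with $\sum_I\varphi(\tau_I)X_I$, hence intertwines their exponentials. This yields $\sdiff(\cm)(\varphi)(\exp_\Lambda(X))=\exp(\sum_I\varphi(\tau_I)X_I)=\exp_{\Lambda'}(\mf{n}(\cm)(\varphi)(X))$, which is exactly the asserted relation since $\cx(\cm)(\varphi)$ sends $\sum_I\tau_IX_I$ to $\sum_I\varphi(\tau_I)X_I$. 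Now $\mf{n}(\cm)(\varphi)$ is the restriction of $\varphi\otimes\id$ to a closed subspace; as $\Lambda$ and $\Lambda'$ are finite-dimensional, this is a continuous linear map of Fr\'echet spaces and hence smooth. Read through $\log_{\Lambda}$ and $\log_{\Lambda'}$, the $\cn$-component of $\sdiff(\cm)(\varphi)$ is precisely this linear map, so $\sdiff(\cm)(\varphi)$ is smooth on the chart $\cn(\cm)(\Lambda)\times V$ around the identity.

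Finally I would globalize by the standard translation argument for homomorphisms of Lie groups: since each $\sdiff(\cm)(\Lambda)$ is a Lie group and $\sdiff(\cm)(\varphi)$ is a homomorphism, the identity $\sdiff(\cm)(\varphi)(g\cdot u)=\sdiff(\cm)(\varphi)(g)\cdot\sdiff(\cm)(\varphi)(u)$ exhibits $\sdiff(\cm)(\varphi)$ on the translated chart $g\cdot(\cn(\cm)(\Lambda)\times V)$ as a composition of left translation by $g^{-1}$, the smooth germ at the identity, and left translation by $\sdiff(\cm)(\varphi)(g)$ in $\sdiff(\cm)(\Lambda')$, all of which are smooth. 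As these charts cover $\sdiff(\cm)(\Lambda)$ the map is smooth everywhere. Combined with the functoriality of $\sdiff(\cm)$ in $\catsets^{\catgr}$ established earlier, this shows $\sdiff(\cm)\from\catgr\to\catman_{\op{Fr}}$ is a functor. I expect the main obstacle to be the intertwining computation --- specifically, verifying that postcomposition with $\id\otimes\varphi$ turns the nonlinear exponential parametrization into the single continuous linear map $\mf{n}(\cm)(\varphi)$; the remaining ingredients are finite-dimensional Grassmann linear algebra and the routine translation argument.
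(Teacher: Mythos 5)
Your proof is correct and takes essentially the same route as the paper's: both reduce smoothness of $\sdiff(\cm)(\varphi)$ via the semidirect decomposition to its restriction to the $\cn(\cm)(\Lambda)$-factor, whose representation in the exponential charts is the continuous linear map $\mf{n}(\cm)(\varphi)$, hence smooth. Your explicit intertwining computation $\sdiff(\cm)(\varphi)\circ\exp_{\Lambda}=\exp_{\Lambda'}\circ\,\mf{n}(\cm)(\varphi)$ and the translation argument merely fill in details that the paper's proof leaves implicit.
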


\begin{proof}
 We only have to verify that $\sdiff(\cm)(\phi)$ becomes a smooth
 morphism for each $\phi\from \Lambda\to \Lambda'$. From the
 construction of $\sdiff$ it follows that its restriction to
 $\cn(\cm)(\Lambda)\times V$ is given by
 \begin{equation*}
  \cn(\cm)(\varphi)\times \id_{V}.
 \end{equation*}
 It thus suffices to verify that the restriction to $\cn(\cm)(\Lambda)$
 is smooth, whose coordinate representation is $\mf{n}(\cm)(\varphi)$.
 Since the latter map is linear and continuous it is in particular
 smooth.
\end{proof}

\begin{prop}
\label{prop:schart}
 For each chart $\Phi\from V\to U$ of $\Aut(\cm)$ the functor $\ol{V}$
 as defined in \eqref{eqn:openSubfunctorOfSdiff} is an open subfunctor
 (with respect to the smooth structure just described).
 Moreover, the assignment $\Lambda \mapsto \log_{\Lambda}\times \Phi$
 constitutes a natural isomorphism $\ol{V}\to \ol{U}$ of functors
 $\catgr\to\catman_{\op{Fr}}$.
\end{prop}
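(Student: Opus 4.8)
The plan is to treat the two assertions separately, reducing both to the semidirect decompositions already established and to a single naturality property of the exponential maps $\exp_\Lambda$. Throughout I would work with the identifications coming from Lemma \ref{lem:semidirceProductDecomposition}: since $\sdiff(\cm)(\epsilon_\Lambda)$ is exactly the projection of $\sdiff(\cm)(\Lambda)\cong\cn(\cm)(\Lambda)\rtimes\Aut(\cm)$ onto its $\Aut(\cm)$-factor, the set $\ol{V}(\Lambda)=\sdiff(\cm)(\epsilon_\Lambda)^{-1}(V)$ is the subset $\cn(\cm)(\Lambda)\times V$. Likewise $\cx(\cm)(\epsilon_\Lambda)$ is the linear projection of $\cx(\cm)(\Lambda)\cong\mf{n}(\cm)(\Lambda)\rtimes\aut(\cm)$ onto $\aut(\cm)$, so $\ol{U}(\Lambda)=\cx(\cm)(\epsilon_\Lambda)^{-1}(U)=\mf{n}(\cm)(\Lambda)\times U$, and the candidate $\log_\Lambda\times\Phi$ carries $\ol{V}(\Lambda)$ onto $\ol{U}(\Lambda)$, recovering exactly the chart constructed above.

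For openness I would argue that, since the Lie group structure on $\sdiff(\cm)(\Lambda)$ is by construction the semidirect product $\cn(\cm)(\Lambda)\rtimes\Aut(\cm)$, the map $\sdiff(\cm)(\epsilon_\Lambda)$ is the projection of a semidirect product onto its quotient factor and hence a smooth, in particular continuous, homomorphism. Therefore $\ol{V}(\Lambda)=\sdiff(\cm)(\epsilon_\Lambda)^{-1}(V)$ is open. The restrictions $\ol{V}(\varphi)=\sdiff(\cm)(\varphi)|_{\ol{V}(\Lambda)}$ are well defined because $\epsilon_{\Lambda'}\circ\varphi=\epsilon_\Lambda$ forces $\sdiff(\cm)(\varphi)$ to carry $\sdiff(\cm)(\epsilon_\Lambda)^{-1}(V)$ into $\sdiff(\cm)(\epsilon_{\Lambda'})^{-1}(V)$, and they are smooth by Proposition \ref{prop:manfr}. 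As $V$ is open and $\ol{V}$ has precisely the normal form $\Lambda\mapsto\sdiff(\cm)(\epsilon_\Lambda)^{-1}(V)$, the characterization of open subfunctors recalled just before \eqref{eqn:openSubfunctorOfSdiff} shows $\ol{V}$ is an open subfunctor of $\sdiff(\cm)$.

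That each component $\log_\Lambda\times\Phi\colon\cn(\cm)(\Lambda)\times V\to\mf{n}(\cm)(\Lambda)\times U$ is an isomorphism in $\catman_{\op{Fr}}$ is immediate, as $\log_\Lambda$ is the inverse of $\exp_\Lambda$, which by construction defines the smooth structure on $\cn(\cm)(\Lambda)$, and $\Phi$ is a chart of $\Aut(\cm)$; a product of diffeomorphisms is a diffeomorphism. The actual content is naturality, i.e.\ commutativity of
\[
(\log_{\Lambda'}\times\Phi)\circ\sdiff(\cm)(\varphi)=\cx(\cm)(\varphi)\circ(\log_\Lambda\times\Phi)
\]
on $\ol{V}(\Lambda)$ for every $\varphi\colon\Lambda\to\Lambda'$. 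On the $V$-factor both sides reduce to $\Phi$, since $\sdiff(\cm)(\varphi)$ and $\cx(\cm)(\varphi)$ restrict to the identity there (again by $\epsilon_{\Lambda'}\circ\varphi=\epsilon_\Lambda$, cf.\ the description of the restriction in Proposition \ref{prop:manfr}). On the $\cn$-factor the claim becomes $\cn(\cm)(\varphi)\circ\exp_\Lambda=\exp_{\Lambda'}\circ\mf{n}(\cm)(\varphi)$. This is the step I expect to be the crux.

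To settle it I would use the explicit description from Theorem \ref{sdiff}: an element of $\cn(\cm)(\Lambda)$ is the algebra homomorphism $\exp(\sum_I\tau_IX_I)$ with underlying automorphism the identity, while $\cn(\cm)(\varphi)$ acts by postcomposition with $\id_{C^\infty(\cm)}\otimes\varphi$ and $\mf{n}(\cm)(\varphi)$ sends $\sum_I\tau_IX_I\mapsto\sum_I\varphi(\tau_I)X_I$. Writing $D_\Lambda=\sum_I\tau_IX_I$ as an operator on $C^\infty(\cm)\otimes\Lambda$ in which $X_I$ acts on the first and $\tau_I$ multiplies in the second tensor factor, one checks on simple tensors that $(\id\otimes\varphi)\circ D_\Lambda=D_{\Lambda'}\circ(\id\otimes\varphi)$ with $D_{\Lambda'}=\sum_I\varphi(\tau_I)X_I$, the two actions living on disjoint factors and $\varphi$ being a parity-preserving algebra morphism. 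Since $\exp$ is a finite power series with scalar coefficients, $\id\otimes\varphi$ intertwines $\exp(D_\Lambda)$ and $\exp(D_{\Lambda'})$; evaluating on $C^\infty(\cm)\subseteq C^\infty(\cm)\otimes\Lambda$, where $\id\otimes\varphi$ acts as the identity, gives exactly $\cn(\cm)(\varphi)(\exp_\Lambda(X))=\exp_{\Lambda'}(\mf{n}(\cm)(\varphi)(X))$. This establishes naturality and identifies $\ol{V}$ with the superdomain $\ol{U}$.
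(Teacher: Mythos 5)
Your proof is correct and follows essentially the same route as the paper: identify $\ol{V}(\Lambda)\cong\cn(\cm)(\Lambda)\times V$ and $\ol{U}(\Lambda)\cong\mf{n}(\cm)(\Lambda)\times U$ via the semidirect decomposition, and note that the components $\log_\Lambda\times\Phi$ are diffeomorphisms by the very construction of the pulled-back smooth structure. The one place you go beyond the paper's proof is the explicit verification of the naturality square, i.e.\ $\cn(\cm)(\varphi)\circ\exp_\Lambda=\exp_{\Lambda'}\circ\mf{n}(\cm)(\varphi)$ via the intertwining operator $\id\otimes\varphi$ on $C^\infty(\cm)\otimes\Lambda$; the paper takes this for granted (it is asserted in the proof of Proposition \ref{prop:manfr} as ``from the construction of $\sdiff$'', with coordinate representation $\mf{n}(\cm)(\varphi)$), so your argument correctly and usefully fills in that step.
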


\begin{proof}
 Since $\ol{U}(\Lambda)=U\times\mathfrak{n}(\cm)(\Lambda)$ for all $\Lambda$, $\ol{U}$ 
 is an open subfunctor of $\cx(\cm)$. On the other hand we have given $\ol{V}$ the
 topology pulled back from $\ol{U}$ via the bijection $\log_{\Lambda}\times\Phi$
 where $\Phi:V\to U$ is the underlying chart on $\Aut(\cm)$ so $\ol{V}$ is open.
 
 The very same argument applies to the smooth structure: we have endowed $\ol{V}$ with
 the smooth structure pulled back from $\ol{U}$, turning $\log_{\Lambda}\times\Phi$ and
 $\exp_\Lambda\times\Phi^{-1}$ into mutually inverse diffeomorphisms.
\end{proof}

Eventually, we may conclude the following theorem.

\begin{thm}
 $\sdiff(\cm)$ is a Fr\'echet super Lie group, modeled on the
 superrepresentable $\olr$-module of (even and odd) vector fields
 $\cx(\cm)$.
\end{thm}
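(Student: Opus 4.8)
The plan is to upgrade the group object $\sdiff(\cm)$ of $\catsets^\catgr$ to a group object of $\catsman$, using the structural results already established. Let me first collect these. By the corollary to Proposition \ref{sdhom}, $\sdiff(\cm)$ is a group object in $\catsets^\catgr$; by the discussion preceding Proposition \ref{prop:manfr} each $\sdiff(\cm)(\Lambda)\cong\cn(\cm)(\Lambda)\rtimes\Aut(\cm)$ is an infinite-dimensional Fr\'echet--Lie group modeled on $\cx(\cm)(\Lambda)$, so the level-wise multiplication and inversion are smooth; by Proposition \ref{prop:manfr} the assignment $\Lambda\mapsto\sdiff(\cm)(\Lambda)$ is a functor $\catgr\to\catman_{\op{Fr}}$; and by Proposition \ref{prop:schart} the open subfunctor $\ol{V}$ is carried by $\log_\Lambda\times\Phi$ isomorphically onto the open subfunctor $\ol{U}$ of the superrepresentable $\olr$-module $\cx(\cm)$. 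Thus $\ol{V}$ is a Fr\'echet superdomain furnishing a superchart around the identity, already modeled on $\cx(\cm)$.

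Everything now reduces to showing that multiplication $m$ and inversion $i$ are supersmooth morphisms in $\catsman$. Once this is known, a maximal atlas is obtained by left translation: letting $g$ range over a cover of $\Aut(\cm)$ by left translates of $V$ (the single exponential chart of Proposition \ref{prop:schart} already covering the nilpotent factor $\cn(\cm)$), the translates $\{L_g\cdot\ol{V}\}_g$ form an open cover of $\sdiff(\cm)$ by copies of the superdomain $\ol{U}\subset\cx(\cm)$, with transition functions given in coordinates by the left translations $x\mapsto h^{-1}gx$; these are supersmooth precisely because $m$ is, so $\sdiff(\cm)$ becomes a Fr\'echet supermanifold modeled on $\cx(\cm)$ and, with supersmooth $m$ and $i$, a group object in $\catsman$.

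Since supersmoothness is a local property, I would check it for $m$ and $i$ in the single chart $\ol{U}$ around the identity; the behaviour at other points is recovered from the level-wise smooth group structure together with the translated charts. In this chart the semidirect-product coordinates split the verification into two independent pieces. On the $\Aut(\cm)$-factor the operations are smooth because $\Aut(\cm)$ is a Fr\'echet--Lie group and enters $\sdiff(\cm)$ only as the constant functor. On the nilpotent factor $\cn(\cm)(\Lambda)$ the operations are governed by the exponential parametrization of Theorem \ref{sdiff} and the inversion formula \eqref{invhom} of Theorem \ref{sdiffinv}, which present $m$ and $i$ as natural transformations whose components are smooth and whose dependence on the odd generators of $\Lambda$ is polynomial. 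A natural transformation of open subfunctors of superrepresentable $\olr$-modules of this form is supersmooth in the sense of \cite{Sachse08A-Categorical-Formulation-of-Superalgebra-and-Supergeometry}, which completes the argument.

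The main obstacle is exactly this supersmoothness check: all the genuine analysis has already been absorbed into the Fr\'echet--Lie group structure on each $\sdiff(\cm)(\Lambda)$ and into Proposition \ref{prop:manfr}, so what remains is to confirm that smoothness of the components, combined with the polynomial dependence on the odd generators supplied by Theorems \ref{sdiff} and \ref{sdiffinv}, does imply supersmoothness in the categorical sense --- i.e.\ that no further compatibility is lost when passing from the individual levels to the natural transformation.
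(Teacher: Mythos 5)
Your proposal assembles the same ingredients as the paper (Propositions \ref{prop:manfr} and \ref{prop:schart}, an atlas of charts translated by elements of $\Aut(\cm)$, and the explicit formulas of Theorems \ref{sdiff} and \ref{sdiffinv}), but it has a genuine gap at precisely the step you yourself call ``the main obstacle''. You assert that a natural transformation between open subfunctors of superrepresentable $\olr$-modules with smooth components and polynomial dependence on the odd generators is automatically supersmooth. This is false, and it is exactly what must be proven: since the odd generators of any $\Lambda_n$ are nilpotent, \emph{every} natural transformation of such functors depends polynomially on them, so your criterion collapses to ``smooth components imply supersmoothness'' --- which fails because $\catsman$ is not a full subcategory of $\catman^\catgr$. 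Supersmoothness demands, beyond componentwise smoothness and naturality, that the differential be $\Lambda_{\bar{0}}$-linear. A concrete counterexample to your principle: the family $x_0+\nu\mapsto f_0(x_0)+f_1(x_0)\nu+f_2(x_0)\nu^2+\cdots$ (with $\nu$ the even nilpotent part, the sum finite at each level) defines a natural transformation $\olr\to\olr$ with smooth components for \emph{arbitrary} smooth $f_k$, but it is supersmooth only when $f_k=f_0^{(k)}/k!$. The paper discharges the obligation concretely: using \eqref{sdiff_mult} it computes the transition function between $\olv$ and a right-translated chart $R_{\psi_0}\olv$ and finds that it acts as the identity on the nilpotent factor $\cn(\cm)(\Lambda_n)$, while for a left-translated chart $L_{\psi_0}\olv$ it acts as $d\psi_0^{-1}$, which is $\Lambda_{\bar{0}}$-linear because it is by definition extended $\Lambda_{\bar{0}}$-linearly to $\cx(\cm)(\Lambda)$. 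Without some such explicit computation the required $\Lambda_{\bar{0}}$-linearity is simply unproven.

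There is also a circularity in your reduction. You justify supersmoothness of the transition functions of the atlas $\{L_g\cdot\ol{V}\}_g$ by saying they are supersmooth ``precisely because $m$ is''; but supersmoothness of $m$ as a morphism in $\catsman$ presupposes that $\sdiff(\cm)$ (and its product with itself) already carries a supermanifold structure, i.e.\ that the atlas with supersmooth transitions has already been constructed --- which is what you were in the middle of constructing. This can be repaired by phrasing everything in coordinates, and that is in effect what the paper does: it first builds the translated charts, verifies their transition functions directly via \eqref{sdiff_mult}, thereby obtaining the supermanifold structure, and only afterwards (in the following subsection) proves that the group multiplication is supersmooth, using that $d\phi(Y)$ is a composition of $d\phi_0$ and Lie brackets, hence $\Lambda_{\bar{0}}$-linear. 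As written, your argument both runs in a circle and omits the one genuinely super-geometric verification that distinguishes this theorem from its purely even analogue.
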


\begin{proof}
Proposition \ref{prop:manfr} turns $\sdiff(\cm)$ into a functor $\catgr\to\catman_{\op{Fr}}$ and Proposition \ref{prop:schart} extends the chart $V$ around the identity on $\Aut(\cm)$ to a superchart on $\sdiff(\cm)$. 
This superchart $\olv$ can be translated to a superchart around any $\phi\in\Aut(\cm)$: since $\Aut(\cm)$ canonically embeds into each of the groups $\sdiff(\cm)(\Lambda)$, $\phi$ acts on each of the points $\olv(\Lambda)$ by left and right translation.

It remains to be shown that the transition functions between the charts obtained in this way are supersmooth.
The components are clearly smooth, so we just have to check the $\Lambda_{\bar{0}}$-linearity of the differential.

It is sufficient to study the intersection of $\olv$ and a chart $R_{\psi_0}\olv$ obtained from it by, say, right translation with an element $\psi_0\in\Aut(\cm)$. Then we see from \eqref{sdiff_mult} that every element in $R_{\psi_0}\olv(\Lambda_n)$ is of the form
\[
\exp(\sum_{I\subseteq\{1,\ldots,n\}}\tau_IX_I)\circ\phi_0\circ\psi_0.
\]
If such an element lies in $\olv(\Lambda_n)$ as well then the transition function will only affect the underlying part by identifying $\phi_0\circ\psi_0$ with some other $\phi_0'$ in $\olv(\realR)$. On the
nilpotent part $\cn(\cm)(\Lambda_n)$ in $\olv(\Lambda_n)\cong V\times\cn(\cm)(\Lambda_n)$ (that is an isomorphism in $\catman^\catgr$) the transition function acts as the identity. Its differential is thus the identity as well and therefore in particular $\Lambda_{\bar{0}}$-linear.

Had we instead used left translation to produce a superchart $L_{\psi_0}\olv$ then we would have found
\[
\psi_0\circ\exp(\sum_{I\subseteq\{1,\ldots,n\}}\tau_IX_I)\circ\phi_0=%
\exp(\sum_{I\subseteq\{1,\ldots,n\}}\tau_I\,d\psi_0^{-1}(X_I))\circ\psi_0\circ\phi_0.
\]
So in this case the transition function acts as $d\psi_0^{-1}$ on $\mathfrak{n}(\Lambda_n)\subset\cx(\cm)(\Lambda_n)$. Since $d\phi_0$ and its inverse are by definition extended to $\cx(\cm)(\Lambda)$ as $\Lambda_0$-linear maps the differential of the transition map is again $\Lambda_{\bar{0}}$-linear.
\end{proof}

\subsection{Supersmoothness of the Group multiplication}

Until now we have only turned $\sdiff(\cm)$ into a super manifold, but we actually want to turn it into a Lie supergorup. For this we have to show that the multiplication functor actually is supersmooth.
Given a function $f\in C^{\infty}(\cm)$, an automorphism $\phi_0\in \Aut(\cm)\cong\Aut(C^{\infty}(\cm))$ and a vector field $X\in \cx(\cm)\cong\Der(C^{\infty}(\cm))$ we have
\[
X \circ\phi_0(f)=(\phi_{0}\circ \phi_{0}^{-1}\circ X \circ \phi_{0})(X)=\phi_{0}\circ (d\phi_{0}^{-1}(X))(f),
\]
(cf. \eqref{differential}). For $X$ an even derivation, $(\phi_{0},X)\mapsto d \phi_{0}(X)$ is the adjoint action of $\Aut(\cm)$ its 
Lie algebra $\cx(\cm)_{\ol{0}}$ and thus the action of $\Aut(\cm)$ on $\cx(\cm)$ is smooth.

Now Theorem \ref{sdiff} permits us to derive an explicit formula for the group multiplication in coordinates. Let 
\begin{eqnarray*}
\phi &=& \exp(\sum_{I\subseteq\{1,\ldots,n\}}\tau_IX_I)\circ\phi_0,\\
\psi &=& \exp(\sum_{J\subseteq\{1,\ldots,n\}}\tau_JY_J)\circ\psi_0
\end{eqnarray*}
be two $\Lambda_n$-points of $\sdiff(\cm)$. Then we have
\begin{eqnarray}
\phi\circ\psi &=& \exp(\sum_{I\subseteq\{1,\ldots,n\}}\tau_IX_I)\circ\phi_0\circ%
\exp(\sum_{J\subseteq\{1,\ldots,n\}}\tau_JY_J)\circ\psi_0\label{sdiff_mult}
\\
\nonumber&=& \exp(\sum_{I\subseteq\{1,\ldots,n\}}\tau_IX_I)\circ%
\exp(\sum_{J\subseteq\{1,\ldots,n\}}\tau_J\,d\phi_0^{-1}(Y_J))\circ\phi_0\circ\psi_0.
\end{eqnarray}
For $\psi_0=\phi_0^{-1}$ and $X_I=-Y_I$ one recovers the inversion formula \eqref{invhom} for superdiffeomorphisms.

In the next section we will show that $\Aut(\cm)$ can be turned into a Fr\'echet Lie group acting smoothly on
vector fields. Assuming this we can show
\begin{prop}
The group multiplication in $\sdiff(\cm)$ is supersmooth.
\end{prop}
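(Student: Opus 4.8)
The plan is to deduce supersmoothness of $m\from\sdiff(\cm)\times\sdiff(\cm)\to\sdiff(\cm)$ from the explicit coordinate formula \eqref{sdiff_mult} after localizing near the identity. Recall the criterion used already for the transition functions: a natural transformation between superdomains is supersmooth exactly when each component $m_\Lambda$ is smooth in the Fr\'echet sense and, at every point, the Fr\'echet differential of $m_\Lambda$ is $\Lambda_{\bar{0}}$-linear. Both conditions will be verified in charts of the form $\olv$ built in the previous section. First I would localize: since the left and right translations $L_{\psi_0},R_{\psi_0}$ by elements $\psi_0\in\Aut(\cm)$ were already shown to be supersmooth (these are precisely the maps whose transition functions we analysed when transporting the chart $\olv$ around the identity to a chart around $\psi_0$), the identity $m(g,h)=L_{g_0}R_{h_0}\bigl(m(L_{g_0^{-1}}g,\,R_{h_0^{-1}}h)\bigr)$ reduces supersmoothness of $m$ everywhere to supersmoothness on a neighbourhood of the pair of identities, i.e.\ on $\olv\times\olv$ with values in a translate of $\olv$.

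Next I would read off the chart form of $m$. Using Theorem \ref{sdiff} to identify a $\Lambda_n$-point $\phi$ with the pair $(\phi_0,\{X_I\})$ and $\psi$ with $(\psi_0,\{Y_J\})$, formula \eqref{sdiff_mult} presents $\phi\circ\psi$ as $\exp(\sum_I\tau_IX_I)\circ\exp(\sum_J\tau_J\,d\phi_0^{-1}(Y_J))\circ\phi_0\circ\psi_0$. With respect to the decomposition $\sdiff(\cm)(\Lambda)\cong\cn(\cm)(\Lambda)\rtimes\Aut(\cm)$ this splits $m$ into an underlying part $(\phi_0,\psi_0)\mapsto\phi_0\circ\psi_0$ and a nilpotent part that combines $\{X_I\}$ with the $\phi_0$-twisted fields $\{d\phi_0^{-1}(Y_J)\}$ by composing the two exponentials. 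Because the $\tau_I$ are nilpotent, the Baker--Campbell--Hausdorff combination $\exp(A)\exp(B)=\exp\bigl(A+B+\tfrac12[A,B]+\cdots\bigr)$ terminates after finitely many brackets, so the resulting fields $W_K$ are \emph{polynomial} expressions in the $X_I$ and $d\phi_0^{-1}(Y_J)$ with $\tau$-monomial coefficients. Smoothness of the components then follows term by term: the underlying map is the multiplication of the Fr\'echet--Lie group $\Aut(\cm)$; the assignment $(\phi_0,Y_J)\mapsto d\phi_0^{-1}(Y_J)$ is smooth since, as recorded above via \eqref{differential}, it is the smooth action of $\Aut(\cm)$ on $\cx(\cm)$; and the finite bracket-polynomial is continuous and multilinear, hence smooth, in its vector-field arguments.

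The main obstacle is the second half of the criterion: that the differential of $m_\Lambda$ along the nilpotent directions $\fn(\cm)(\Lambda)$ be $\Lambda_{\bar{0}}$-linear. Here I would argue that every operation entering the nilpotent part --- the multiplication of $\tau$-monomials and the Lie bracket of $\cx(\cm)$ extended to $\cx(\cm)(\Lambda)$ --- is, by the very construction of the superrepresentable $\olr$-module $\cx(\cm)$, $\olr$-(multi)linear, so that the combined map is a $\Lambda_{\bar{0}}$-polynomial in the $X_I,Y_J$. Differentiating such a polynomial along the $\fn(\cm)(\Lambda)$-directions again yields a map assembled from these $\olr$-linear operations, hence $\Lambda_{\bar{0}}$-linear at every point; the twist by $d\phi_0^{-1}$ only alters the underlying argument and was already arranged to act $\Lambda_{\bar{0}}$-linearly on $\cx(\cm)(\Lambda)$. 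This is exactly the $\Lambda_{\bar{0}}$-linearity bookkeeping that rendered the transition functions supersmooth in the previous theorem, and it carries over verbatim.

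Collecting the smoothness of the components with the $\Lambda_{\bar{0}}$-linearity of the differential establishes supersmoothness of $m$ on $\olv\times\olv$, and then the localization of the first step promotes this to supersmoothness everywhere. I expect the genuinely delicate point to be purely the $\Lambda_{\bar{0}}$-linearity verification of the nilpotent factor; the underlying multiplication, already handled by the Fr\'echet--Lie group structure on $\Aut(\cm)$, carries no new difficulty here.
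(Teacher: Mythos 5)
Your overall skeleton matches the paper's: both arguments rest on the product formula \eqref{sdiff_mult}, on the Fr\'echet--Lie group structure of $\Aut(\cm)$ together with the smoothness of its action on $\cx(\cm)$, and on the principle that whatever happens in the nilpotent directions is assembled from Lie brackets and $d\phi_0^{-1}$, which are $\Lambda_{\bar 0}$-linear because they are extended via the functor $\ol{\cdot}$. Your replacement of the paper's citation of \cite{S:thesis} (for the formula $d\phi(Y)=\exp(-\sum_I\tau_I L_{X_I})\circ d\phi_0(Y)$) by a finitely terminating Baker--Campbell--Hausdorff expansion is legitimate and more self-contained, and your localization by translations is consistent with how the paper builds its atlas.

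The gap lies in how you state and then verify the supersmoothness criterion. You reduce it to smoothness of the components plus $\Lambda_{\bar 0}$-linearity of the differential \emph{along the nilpotent directions} $\fn(\cm)(\Lambda)$, treating the underlying $\aut(\cm)$-directions as a separate, unproblematic ingredient handled by the Lie group $\Aut(\cm)$. But supersmoothness requires the \emph{full} Fr\'echet differential of $m_\Lambda$ to be $\Lambda_{\bar 0}$-linear, and since $\Lambda_{\bar 0}$ contains nilpotents this condition couples the two kinds of directions: for an underlying direction $\xi\in\aut(\cm)$ (in either factor) and a nilpotent even $\lambda$, the vector $\lambda\xi$ lies in $\fn(\cm)(\Lambda)$, so one must check
\begin{equation*}
 dm_{(\phi,\psi)}(\lambda\xi)=\lambda\, dm_{(\phi,\psi)}(\xi),
\end{equation*}
i.e.\ a derivative taken along a nilpotent direction must agree with $\lambda$ times a derivative taken along an underlying direction. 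This mixed identity is exactly where the difficulty of the proposition sits; it is not implied by $\Lambda_{\bar 0}$-multilinearity of the BCH brackets (which only concerns nilpotent arguments) combined with smoothness of the $\Aut(\cm)$-multiplication. The paper's proof is organized to handle precisely this point: it decomposes $dm_{(\phi,\psi)}(\xi,\eta)=dR_\psi(\xi)+dL_\phi(\eta)$ and then uses that the differential of translation by a \emph{fixed} superdiffeomorphism is the single operator $\exp(-\sum_I\tau_I L_{X_I})\circ d\phi_0$ acting on all of $\cx(\cm)(\Lambda)$ at once, so that underlying and nilpotent tangent vectors are treated uniformly and the mixed identity is automatic. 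To close your argument you would either have to prove this operator statement (the content the paper imports from \cite{S:thesis}) or verify the mixed identity directly from your BCH expansion; as written, neither is done.
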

\begin{proof}
From the above formula it is evident that the multiplication morphism is smooth in every $\Lambda_{n}$-point, so it remains to check that it is also supersmooth.

To see that the differential of the multiplication is $\Lambda_{\bar{0}}$-linear it is sufficient to check
that the differentials of left and right translation are $\Lambda_{\bar{0}}$-linear. To check that it is in
turn enough to see that the action of a superdiffeomorphism $\phi\in\sdiff(\cm)(\Lambda)$ on $\cx(\cm)(\Lambda)$ is a $\Lambda_{\bar{0}}$-linear map.

This is shown in \cite{S:thesis}. More precisely it is shown that $\phi$ acts on a super vector field $Y$
by its differential
\begin{eqnarray*}
d\phi(Y) &=& \exp(-\!\sum_{I\subseteq\{1,\ldots,n\}}\tau_I L_{X_I})\circ d\phi_0(Y)
\end{eqnarray*}
where $L_X$ denotes the Lie derivative, i.e., the commutator of vector fields in this case. This action is
extended to all of $\cx(\cm)(\Lambda)$ in the usual way (i.e., by means of the functor $\bar{\cdot}$, cf. \eqref{fbar}). This means the action of $\phi$ on $\cx(\cm)(\Lambda)$ consists of a composition of $d\phi_0$ and brackets and is therefore by construction  $\Lambda_{\bar{0}}$-linear.
\end{proof}


\begin{thebibliography}{Omo78}

\bibitem[Bat79]{Batchelor79The-structure-of-supermanifolds}
Marjorie Batchelor.
\newblock The structure of supermanifolds.
\newblock {\em Trans. Amer. Math. Soc.}, 253:329--338, 1979.

\bibitem[DM99]{DeligneMorgan99Notes-on-supersymmetry}
Pierre Deligne and John~W. Morgan.
\newblock Notes on supersymmetry.
\newblock In {\em Quantum fields and strings: a course for mathematicians,
  {V}ol. 1, 2 ({P}rinceton, {NJ}, 1996/1997)}, pages 41--97. Amer. Math. Soc.,
  Providence, RI, 1999.

\bibitem[ML98]{Mac-Lane98Categories-for-the-working-mathematician}
Saunders Mac~Lane.
\newblock {\em Categories for the working mathematician}, volume~5 of {\em
  Graduate Texts in Mathematics}.
\newblock Springer-Verlag, New York, second edition, 1998.

\bibitem[Mol84]{Molotkov94Infinite-dimensnional-Z2k-super-manifolds}
Vladimir Molotkov.
\newblock Infinite-dimensnional ${Z}_2^k$ super manifolds.
\newblock ICTP Preprint, 1994.
\newblock available from http://ccdb4fs.kek.jp/cgi-bin/img/allpdf?198506284.

\bibitem[Mol05]{M:private}
Vladimir Molotkov.
\newblock{Private communication.}

\bibitem[Lan02]{L:Algebra}
Serge Lang.
\newblock {\em Algebra. Revised Third Edition}, volume~211 of {\em
  Graduate Texts in Mathematics}.
\newblock Springer-Verlag, New York, 2002.

\bibitem[Nee06]{Neeb06Towards-a-Lie-theory-of-locally-convex-groups}
Karl-Hermann Neeb.
\newblock Towards a {L}ie theory of locally convex groups.
\newblock {\em Jpn. J. Math.}, 1(2):291--468, 2006.

\bibitem[Omo78]{Omori78On-Banach-Lie-groups-acting-on-finite-dimensional-manif%
olds}
Hideki Omori.
\newblock On {B}anach-{L}ie groups acting on finite dimensional manifolds.
\newblock {\em T\^ohoku Math. J. (2)}, 30(2):223--250, 1978.

\bibitem[Sac08]{Sachse08A-Categorical-Formulation-of-Superalgebra-and-Supergeo%
metry}
Christoph Sachse.
\newblock {A} {C}ategorical {F}ormulation of {S}uperalgebra and
  {S}upergeometry, 2008.
  
\bibitem[Sac09]{S:thesis}
Christoph Sachse.
\newblock {\em Global Analytic Approach to Super Teichm\"uller Spaces}, PhD thesis,
\newblock Universit\"at Leipzig, 2007

\bibitem[vdE00]{Essen00Polynomial-automorphisms-and-the-Jacobian-conjecture}
Arno van~den Essen.
\newblock {\em Polynomial automorphisms and the {J}acobian conjecture}, volume
  190 of {\em Progress in Mathematics}.
\newblock Birkh{\"a}user Verlag, Basel, 2000.

\bibitem[Woc07]{Wockel07Lie-group-structures-on-symmetry-groups-of-principal-b%
undles}
Christoph Wockel.
\newblock Lie group structures on symmetry groups of principal bundles.
\newblock {\em J. Funct. Anal.}, 251(1):254--288, 2007.

\end{thebibliography}

\end{document}